 \newtheorem{theorem}{Theorem}[section]
\newtheorem{lemma}[theorem]{Lemma}
\newtheorem{proposition}[theorem]{Proposition}
\newtheorem{definition}[theorem]{Definition}
 \theoremstyle{definition}
\newtheorem{remark}[theorem]{Remark}
\numberwithin{equation}{section}
\newcommand{\eps}{\varepsilon}
\newcommand{\norm}[1]{\Vert#1\Vert}
\newcommand{\abs}[1]{\left\vert#1\right\vert}
\newcommand{\inner}[1]{\left(#1\right)}
\newcommand{\comi}[1]{\left<#1\right>}
\newcommand{\normm}[1]{{ \vert\kern-0.25ex \vert\kern-0.25ex \vert #1
		\vert\kern-0.25ex \vert\kern-0.25ex \vert}}
 \newbox \abstractbox
\renewenvironment{abstract}{\global\setbox\abstractbox=\vbox\bgroup
 \hsize=\textwidth
  \vskip 1.2cm
  \noindent\unskip \textbf{Abstract.}
 }
 {
 \egroup}
\def\@settitle{%
  \bgroup
  \centering
  \vglue1cm
  \fontsize{12}{15}\fontseries{b}\selectfont
  \@title
  \vskip 20pt plus 6pt minus 8pt
  \egroup
}
\def\@setauthors{%
  \begingroup
  \trivlist
  \centering 
 \normalsize\@topsep30\p@\relax
  \advance\@topsep by -\baselineskip
  \item\relax
  \andify\authors
 {\rmfamily\authors}%
  \endtrivlist
  \endgroup
}
\def\@setaddresses{\par
  \nobreak \begingroup
\normalsize
  \def\author##1{\nobreak\addvspace\bigskipamount}%
  \def\\{\unskip, \ignorespaces}%
  \interlinepenalty\@M
  \def\address##1##2{\begingroup
    \par\addvspace\bigskipamount\noindent
    \@ifnotempty{##1}{(\ignorespaces##1\unskip) }%
    {\ignorespaces##2}\par\endgroup}%
  \def\curraddr##1##2{\begingroup
    \@ifnotempty{##2}{\nobreak\indent{\itshape Current address}%
      \@ifnotempty{##1}{, \ignorespaces##1\unskip}\/:\space
      ##2\par}\endgroup}%
  \def\email##1##2{\begingroup
    \@ifnotempty{##2}{\nobreak\noindent{\itshape E-mail address}%
      \@ifnotempty{##1}{, \ignorespaces##1\unskip}\/:
       ##2\par}\endgroup}%
   \def\urladdr##1##2{\begingroup
    \@ifnotempty{##2}{\nobreak\indent{\itshape URL}%
      \@ifnotempty{##1}{, \ignorespaces##1\unskip}\/:\space
      \ttfamily##2\par}\endgroup}%
  \addresses
  \endgroup
}
  \renewcommand\section{\@startsection{section}{1} %
  \z@{.5\linespacing\@plus.7\linespacing}{.5\linespacing}
 {\normalfont\large\bfseries\boldmath}}
  \def\subsection{\@startsection{subsection}{2}%
  \z@{.5\linespacing\@plus.7\linespacing}{.2\linespacing}%
  {\raggedright\normalfont\bfseries\boldmath}}
\def\subsubsection{\@startsection{subsubsection}{3}%
  \z@{.5\linespacing\@plus.7\linespacing}{-.5em}%
  {\normalfont\bfseries}}
\begin{document}

\title[Global well-posedness of the Prandtl-Shercliff model]{Global Well-Posedness for the 2D and 3D Prandtl-Shercliff Model}

\author[W.-X.Li, Z.Xu and A. Yang]{Wei-Xi Li, Zhan Xu and Anita Yang}

\address[W.-X. Li]{School of Mathematics and Statistics,   \&  Hubei Key Laboratory of Computational Science, Wuhan University, Wuhan 430072,  China } \email{wei-xi.li@whu.edu.cn}

\address[Z. Xu]{School of Mathematics and Statistics,  Wuhan University, Wuhan 430072, China\newline
and \newline
Department of Applied Mathematics, The Hong Kong Polytechnic
University, Kowloon, Hong Kong, SAR, China.}
\email{xuzhan@whu.edu.cn}

\address[A. Yang]{Wuhan Institute for Math \& AI, Wuhan University, Wuhan 430072,  China}
\email{anitaymt@outlook.com}

\keywords{Prandtl-Shercliff model, Global Well-posedness, analytic regularization effect}

 \subjclass[2020]{76D10,76D03}

\begin{abstract}
We investigate the Prandtl-Shercliff model in both two and three dimensions. For the two-dimensional case, we establish global-in-time well-posedness in Sobolev spaces without any structural assumptions on the initial data. Furthermore, we show that the solution exhibits an analytic regularization effect in all variables, which holds globally in time and in space up to the boundary. For the three-dimensional case, we study a linearized version of the model and prove its global-in-time well-posedness for initial data that are analytic in only one tangential direction.  The proofs rely crucially on the intrinsic non-local diffusion induced by the Shercliff boundary layer.
\end{abstract}
 
\maketitle
 

\section{Introduction and main results}

The Prandtl-Shercliff model is a specific type of boundary layer system for magnetohydrodynamics (MHD) flows, which describes the behavior of an electrically conducting fluid  confined to a thin layer under the influence of a transverse magnetic field. The typical feature of this model is that the magnetic field  creates a distinctive, flat velocity profile known as a Shercliff layer. The model is derived from the full MHD equations by applying a boundary layer approximation for high Hartmann number.  Regarding  the mathematical  formulation of the governing equations that follows, we refer to the work   of G\'erard-Varet and Prestipino \cite{MR3657241} for full details.  
Without loss of generality, we consider the fluid domain to be the half-space in $\mathbb{R}^2$ or $\mathbb{R}^3$,  namely,   
\begin{equation*}
    \mathbb{R}^d_+=\{(x,z)\in\mathbb{R}^d;\ x=(x_1,\cdots,x_{d-1})\in \mathbb{R}^{d-1}, z>0\},\quad d=2 \ \textrm{ or } 3. 
\end{equation*}
The governing equations of the Prandtl-Shercliff model in $\mathbb{R}_+^d$ are then given by
\begin{equation}\label{eq:main}
		\left\{
		\begin{aligned}
                &  (\partial_t+u\cdot\partial_x+ w\partial_z-\partial_z^2)u +\partial_x p=\partial_{x_1} f,\\
			&  \partial_{x_1} u+ \partial_z^2 f = 0,\quad  \partial_x\cdot u+ \partial_z w= 0,\\
			&  (u,w,f)|_{z=0}=(0,0,0),\qquad \lim\limits_{z \to \infty}(u,f)=(U_{\infty}(t,x),F_{\infty}(t,x)),\\
			&  u|_{t=0}=u_0,		\end{aligned}
		\right.
	\end{equation}
where we denote by $\partial_x=(\partial_{x_1}, \cdots, \partial_{x_{d-1}})$ the tangential gradient for $x\in \mathbb{R}^{d-1}.$ The unknowns in \eqref{eq:main} are the velocity field $(u, w)$ and the tangential magnetic field $f = (f_1, \cdots, f_{d-1})$, where $u = (u_1, \cdots, u_{d-1})$ and $w$ denote the tangential and vertical velocity components, respectively.   The functions $U_{\infty}(t,x)$, $F_{\infty}(t,x)$, and $p(t,x)$ in \eqref{eq:main} are given data, representing the boundary values of the tangential velocity, magnetic field, and pressure, respectively, and  satisfying the Bernoulli law:
\begin{equation*}
(\partial_t +U_{\infty}\cdot \partial_x)U_{\infty}+\partial_x p=\partial_{x_1}F_{\infty}.
\end{equation*}
The Prandtl-Shercliff model \eqref{eq:main} combines two fundamental physical effects: the viscous effects of the Prandtl layer and the anisotropic, non-local effects induced by the magnetic field in the Shercliff layer.  To simply the argument we will assume that $(U_{\infty},F_{\infty})\equiv (0,0)$ in   system \eqref{eq:main}. This is without loss of generality, as the result for the general case follows from an analogous argument.  Hence, we consider the reduced system:  
	\begin{equation}\label{eq:main2}
		\left\{
		\begin{aligned}
                &  (\partial_t+u\cdot\partial_x+ w\partial_z-\partial_z^2)u=\partial_{x_1} f,\\
			&  \partial_{x_1} u+ \partial_z^2 f = 0,\quad  \partial_x\cdot u+ \partial_z w= 0,\\
			&  (u,w,f)|_{z=0}=(0,0,0),\qquad \lim\limits_{z \to +\infty}(u,f)=(0,0),\\
			&  u|_{t=0}=u_0.
		\end{aligned}
		\right.
	\end{equation}

Before presenting the main result on the global well-posedness of system \eqref{eq:main2}, we recall some known results concerning the Prandtl-type system. The classical Prandtl system  (without a magnetic field) 
\begin{equation*}\label{prand}
		\left\{
		\begin{aligned}
            &  (\partial_t+u^P\cdot\partial_x+ w^P\partial_z-\partial_z^2)u^P +\partial_x p^P=0,\\
            &\partial_x\cdot u^P+\partial_z w^P=0,\\
			&  (u^P,w^P)|_{z=0}=(0,0),\qquad \lim\limits_{z \to \infty}u^P=U^P_{\infty}(t,x),\\
			& u^P|_{t=0}=u^P_0,
		\end{aligned}
		\right.
	\end{equation*}
can be viewed as a degenerate version of the Navier-Stokes equations lacking tangential diffusion. In this system, there is no independent evolution equation for the normal component 
$w^P$; instead, it is fully determined by the divergence-free condition and the boundary condition:
\begin{equation*}
w^P(t,x,z)=-\int^z_0\partial_x\cdot u^P(t,x,\tilde z)d\tilde z.
\end{equation*}
It is the non-local term $w^P$  that leads to a loss of tangential derivatives, which is the major difficulty  in establishing the well-posedness of the Prandtl system. In the absence of Oleinik's monotonicity condition, the  Prandtl system is usually ill-posed in Sobolev spaces, as shown in \cite{MR2952715, MR1476316, MR2601044} and   references therein. So far, the well-posedness property has been extensively studied in a variety of function spaces.  Here we only mention  the recent works of \cite{MR3327535,MR3385340,MR1697762,MR2020656} for Sobolev spaces,  \cite{MR2975371,MR1617542,MR4271962,MR3461362} for analytic spaces  and \cite{MR3429469,MR4055987,MR4465902,MR3925144,MR4701733} for more general Gevrey-class spaces. 

Compared with the classical Prandtl system, the distinctive feature of system \eqref{eq:main2} is the presence of the non-local Shercliff term, $\partial_{x_1}f$, which arises from rapid velocity diffusion along magnetic field lines. 
Analogous to the classical Prandtl system,  the loss of tangential derivatives also occurs in the non-local term $w.$  Nevertheless,  
the local-in-time Sobolev well-posedness of the two-dimensional system \eqref{eq:main2} without any structural assumptions, established by \cite{prestipino2018,MR3657241}, suggests that this non-local Shercliff term may suppress flow instabilities through tangential diffusion.  

This work aims to establish the corresponding global-in-time theory, after the earlier local-in-time results  \cite{prestipino2018,MR3657241}. Precisely, we first establish the global-in-time Sobolev well-posedness of the two-dimensional system \eqref{eq:main2} by extensively exploiting   the stabilizing effect of the non-local Shercliff term. However, for the three-dimensional system \eqref{eq:main2}, the issue of Sobolev well-posedness remains open, even in the local-in-time setting. In this work, we address a linearized version of system \eqref{eq:main2} and establish the global well-posedness for initial data that are analytic in only one tangential component. 
Furthermore, we prove that these solutions exhibit a space-time analytic smoothing effect, analogous to the one observed in the heat equation, which holds globally in time and in space up to the boundary.

Due to the strong diffusion inherent in the heat equation, the associated analytic regularization effect and the radius of analyticity have been extensively studied. Such parabolic regularization was  established for the Navier-Stokes equations on the whole space or torus by Foias and Temam \cite{MR1026858}, who proved space-time analyticity via $L^2$-energy estimates and Fourier techniques. Since then, this Fourier-based approach and  subsequently developed  more modern analytic methods beyond $L^2$ have been applied to study analyticity for the Navier-Stokes equations and more general parabolic equations in various function spaces. While the aforementioned results mainly concern the classical Navier-Stokes equations, much less is known about the analytic regularity of  the Prandtl-type equations. Unlike the heat or Navier-Stokes equations, the Prandtl-type equations are degenerate parabolic equations. This degeneracy, often manifesting as a lack of diffusion in one or more tangential directions, means that one can generally only expect the propagation of initial regularity rather than smoothing in those directions. In the two-dimensional case, the Oleinik's monotonicity condition yields an intrinsic hypoelliptic structure in the tangential direction for the  Prandtl equation, which in turn leads to Gevrey-class regularity at positive times, even for initial data of finite Sobolev regularity (see \cite{MR3493958}). On the other hand, for general initial data without  structural assumption, one may assume strong analyticity in the degenerate directions in order to establish the analytic smoothing effect in other directions, as shown in \cite{preprint-liyangzhang} for the 2D and 3D Prandtl equations. 

\subsection{Notations}  
Before stating the main results, we first introduce some notations that will be used throughout this paper. 

\begin{enumerate}[label={(\arabic*)}, leftmargin=*, widest=ii]
\item	Let $d=2$ or $3$, we will use $\|\cdot\|_{L^2}$ and $(\cdot,\cdot)_{L^2}$ to denote the norm and inner product of $L^2=L^2(\mathbb R_+^d)$, and use the notations $\|\cdot\|_{L^2_x}$ and $(\cdot,\cdot)_{L^2_x}$ when the variable $x$ is specified. Similar notations will be used for $L^{\infty}$. In addition, we use $H^p_xH^q_z=H_x^p(\mathbb{R}^{d-1};H_z^q(\mathbb{{R}}_{+}))$ for the classical Sobolev space.  Similarly,  $H^p_x L^q_{z} = H_x^p(\mathbb R^{d-1}; L_z^q(\mathbb {R_+})).$

\item   For a given norm $\norm{\cdot}$ and a given vector-valued function $\mathbf{A}=(A_1,\cdots,A_n)$, we define
 \begin{equation*}
 	\norm{\mathbf{A}}\stackrel{\rm def}{=}\Big(\sum_{1\leq j\leq k} \norm{A_j}^2\Big)^{\frac12}.
 \end{equation*} 

 \item The symbols $\alpha $ and $\beta$ denote  multi-indices in either $\mathbb{Z}_+^2$ or $\mathbb{Z}_+^3$, depending on context. For a given multi-index $\alpha = (\alpha_1, \dots, \alpha_n) \in \mathbb{Z}_+^n$ ($n=2$ or $3$), we define
\begin{equation*}
 	\tilde\alpha\stackrel{\rm def}{=}\alpha-(1,0,\cdots,0)=(\alpha_1-1,\alpha_2, \cdots,\alpha_n)\in\mathbb Z_+^n 
 \end{equation*} 
for $\alpha_1\ge 1$, and
  \begin{equation*}
 	\alpha_*\stackrel{\rm def}{=}\alpha-(0,0,\cdots,1)=(\alpha_1,\alpha_2, \cdots,\alpha_n-1)\in\mathbb Z_+^n
 \end{equation*} 
for $\alpha_n\ge 1$.
\end{enumerate}

\subsection{Function spaces and main results}

We now state the main results for the two- and three-dimensional cases. In  two dimensions, system \eqref{eq:main2} takes the form 
\begin{equation}\label{eq:2D}
		\left\{
		\begin{aligned}
                &  \partial_t u+u\partial_x u+ w\partial_z u - \partial_z^2u=\partial_x f,\\
			&  \partial_{x}u+ \partial_z^2 f = 0,\quad  \partial_x u+ \partial_zw = 0,\\
			&  (u,w,f)|_{z=0}=(0,0,0),\qquad \lim\limits_{z \to \infty}(u,f)=(0,0),\\
			&  u|_{t=0}=u_0,
		\end{aligned}
		\right.
\end{equation}
which is  posed on $\mathbb R_+^2=\big\{(x,z); \,  x\in\mathbb R, z>0\big\}.$
We work with the anisotropic weighted Sobolev space  $\mathcal{H}^1(\mathbb R_+^2)$, defined by 
\begin{equation}\label{def:weightspaceH1}
 \mathcal{H}^1= \mathcal{H}^1(\mathbb R_+^2)\stackrel{\rm def}{=}\Big\{h(x,z): \mathbb R_+^2\rightarrow\mathbb R;\ \norm{h}_{\mathcal{H}^1} <+\infty\Big\},  
\end{equation}
with   the norm $\norm{\cdot}_{\mathcal{H}^1}$ defined by
\begin{equation}\label{def:weightnorm}
	\norm{h}_{\mathcal{H}^1}^2\stackrel{\rm def}{=} \| h\|^2_{H_x^1L_z^2}+\|\comi z\partial_zh\|^2_{H_x^1L_z^2},
\end{equation}
where, here and below, $\comi z \stackrel{\rm def}{=} (1+z^2)^{\frac{1}{2}}.$  The corresponding inner product  is defined as
\begin{equation*}
	\inner{g,\ h}_{\mathcal{H}^1}\stackrel{\rm def}{=}\big(g,\ h\big)_{H_x^1L_z^2}+\big(\comi z\partial_z g,\  \comi z\partial_z h\big)_{H_x^1L_z^2}.
\end{equation*}

\begin{theorem}\label{thm:2D}
Assume the initial data ${u}_0\in\mathcal{H}^1(\mathbb R_+^2)$, compatible with the boundary condition in system \eqref{eq:2D}. Then there exists a small constant $\varepsilon_0>0$ such that if
\begin{equation*}
\|u_0\|_{\mathcal{H}^1}\leq \varepsilon_0,
\end{equation*}
then the two-dimensional Prandtl-Shercliff system \eqref{eq:2D} admits a unique global-in-time solution $u\in L^\infty([0,+\infty[;\mathcal{H}^1)$ satisfying that 
\begin{equation*}
\forall\ t\ge0,\quad \|u(t)\|_{\mathcal{H}^1} \leq \varepsilon_0.
\end{equation*}
Moreover, the solution $u$ is space-time analytic at any positive time, satisfying that   
\begin{align}\label{result:smoothing}
\forall \ k,m,j\ge0, \quad \sup_{t\ge 0}t^{k+m+\frac{j}{2}}\norm{\partial_t^k\partial_x^m\partial_z^ju}_{\mathcal{H}^1}\leq \varepsilon_0C_0^{k+m+j}(k+m+j)!  
\end{align}
for some constant $C_0>0$. 
\end{theorem}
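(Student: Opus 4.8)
The plan is to reduce \eqref{eq:2D} to a single nonlocal evolution equation for $u$ and to run a weighted energy method in $\mathcal H^1$ whose dissipation captures both the vertical viscosity and the tangential smoothing hidden in the Shercliff term. First I would eliminate $w$ and $f$: the divergence-free condition with the boundary data gives $w(t,x,z)=-\int_0^z\partial_x u(t,x,\tilde z)\,d\tilde z$, while $f$ solves the elliptic-in-$z$ problem $\partial_z^2 f=-\partial_x u$ with $f|_{z=0}=0$ and $f\to0$, so that the Shercliff forcing becomes $\partial_x f=-\partial_z^{-2}\partial_x^2 u$. Writing $A=-\partial_z^2$ with Dirichlet data at $z=0$, the first equation reads $\partial_t u+u\partial_x u+w\partial_z u = -A u+A^{-1}\partial_x^2 u$, and on a single tangential Fourier mode $\xi$ the linear part is $-\bigl(A+\xi^2A^{-1}\bigr)$; minimizing $\lambda+\xi^2/\lambda$ over the vertical spectral parameter $\lambda$ shows the decay rate is $\sim|\xi|$, which is precisely the first-order tangential smoothing responsible both for the global control and for the scaling in \eqref{result:smoothing}.

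For the global bound I would test the reduced equation against $u$, then against its $\partial_x$-derivative and the $\langle z\rangle$-weighted pieces required by $\mathcal H^1$. The crucial identity is $(\partial_x f, u)_{L^2}=-(f,\partial_x u)_{L^2}=(f,\partial_z^2 f)_{L^2}=-\|\partial_z f\|_{L^2}^2$, where both boundary terms vanish thanks to $u|_{z=0}=f|_{z=0}=0$ and the decay at infinity; this supplies the extra tangential dissipation $\|\partial_z f\|^2=\|A^{-1/2}\partial_x u\|^2$ on top of the viscous $\|\partial_z u\|^2$. The transport nonlinearity $u\partial_x u+w\partial_z u$ integrates to zero against $u$ by $\partial_x u+\partial_z w=0$ and the boundary conditions, so at top order only commutator and weight-induced terms survive. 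Bounding those by distributing $\langle z\rangle$ onto $\partial_z u$ and controlling them by the dissipation together with the smallness of $\|u\|_{\mathcal H^1}$, I expect a differential inequality $\tfrac{d}{dt}E+cD\le C E^{1/2}D$ with $E\simeq\|u\|_{\mathcal H^1}^2$ and $D$ the full dissipation. Choosing $\varepsilon_0$ so that $CE^{1/2}\le c/2$ gives $\tfrac{d}{dt}E+\tfrac c2 D\le0$, hence $E(t)\le E(0)\le\varepsilon_0^2$ for all $t$; uniqueness follows from the same estimate applied to the difference of two solutions.

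For the analytic regularization I would prove \eqref{result:smoothing} by induction on the total order $N=k+m+j$, propagating the weighted quantities $t^{k+m+j/2}\|\partial_t^k\partial_x^m\partial_z^j u\|_{\mathcal H^1}/(k+m+j)!$. Using the equation, each $\partial_t$ is traded for $\partial_z^2=-A$, the nonlocal term $A^{-1}\partial_x^2$, and quadratic transport contributions, so the worst scaling per derivative is $\partial_t,\partial_x\sim t^{-1}$ and $\partial_z\sim t^{-1/2}$, matching the time weights: the gain of $t^{1/2}$ per $\partial_z$ comes from the parabolic operator $A$ and the gain of $t$ per $\partial_x$ from $A^{-1}\partial_x^2$, exactly as in the linear spectral analysis above. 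The nonlinear interactions are then reproduced by Cauchy-product/binomial estimates that regenerate the factorial on the right-hand side, closed against the already-established global dissipation.

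I expect the main obstacle to be the interplay of the weight $\langle z\rangle$ with the two nonlocal objects, the vertical velocity $w$ (which does not decay in $z$) and the Shercliff operator $A^{-1}$. Unlike a genuine tangential Laplacian, the dissipation $\|\partial_z f\|^2=\|A^{-1/2}\partial_x u\|^2$ is an elliptic-in-$z$ nonlocal quantity, and converting it into usable control of $\partial_x$-derivatives of $u$ both near and away from the boundary, uniformly in the weight, is the technical heart of the global estimate and of the sharp $t^{k+m+j/2}$ scaling. Establishing analyticity up to $z=0$ adds the further subtlety that high normal derivatives are not free at the boundary and must be reconciled with the Dirichlet conditions through the equation itself.
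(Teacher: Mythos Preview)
Your overall plan---energy method in $\mathcal H^1$, bootstrap, then propagate weighted derivatives for analyticity---matches the paper. But there is a genuine gap in how you identify the Shercliff dissipation, and it is precisely the point you flag as the ``technical heart''.

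You write the key identity at the $L^2$ level as $(\partial_x f,u)_{L^2}=-\|\partial_z f\|_{L^2}^2=\|A^{-1/2}\partial_x u\|^2$, obtain a nonlocal dissipation, and then say converting this into direct control of $\partial_x u$ is the hard part you have not resolved. The paper's insight is that no conversion is needed: if you do the pairing in the full $\mathcal H^1$ inner product, the weighted $\langle z\rangle\partial_z$ piece changes the outcome. After integrating by parts in $x$, using $\partial_x u=-\partial_z^2 f$, and then integrating by parts in $z$ against the $\langle z\rangle^2$ weight (the cross terms cancel exactly, as in the treatment of the viscous term), one obtains
\[
(\partial_x f,\,u)_{\mathcal H^1}=-\|\langle z\rangle\,\partial_z^2 f\|_{H^1_xL^2_z}^2=-\|\langle z\rangle\,\partial_x u\|_{H^1_xL^2_z}^2,
\]
a \emph{local} weighted norm of $\partial_x u$ itself. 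The dissipation is therefore $\mathcal D=\|\langle z\rangle\partial_z^2 u\|_{H^1_xL^2_z}^2+\|\langle z\rangle\partial_x u\|_{H^1_xL^2_z}^2$, and the nonlinear terms (which do \emph{not} vanish at the $\mathcal H^1$ level---only commutators and weight-induced pieces survive, as you say) are bounded directly by $C\|u\|_{\mathcal H^1}\mathcal D$ via Hardy and Sobolev, with no nonlocal operator to invert. This is what makes the closure $\tfrac{d}{dt}\|u\|_{\mathcal H^1}^2+\mathcal D\le C\|u\|_{\mathcal H^1}\mathcal D$ immediate.

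For the analyticity, your inductive scheme is plausible, but the paper instead runs a single energy estimate on summed analytic norms built from $D^\alpha=t^{\alpha_1+\alpha_2+\alpha_3/2}\partial_t^{\alpha_1}\partial_x^{\alpha_2}\partial_z^{\alpha_3}$ with coefficients $r^{|\alpha|}(|\alpha|+1)^4/|\alpha|!$. The advantage is that the boundary terms arising from $\partial_z^k u|_{z=0}$ and $\partial_z^k f|_{z=0}$ for large $k$ (your last-paragraph worry) are absorbed systematically: one uses the equation to trade excess $\partial_z$'s for $\partial_t$ and $\partial_x$, exploits $u|_{z=0}=\partial_z^2u|_{z=0}=0$, and closes with a smallness factor $r^{1/2}$ coming from the index shift. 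An induction would have to reproduce this mechanism order by order.
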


In the three-dimensional case,   
  we denote the velocity field by $(u,v,w)$ and   the tangential magnetic field by $(f,g)$, with spatial variables $(x,y,z)\in\mathbb R_+^3$.  We consider the following linearization of system \eqref{eq:main2} around a given shear flow $(U,V)$:
	\begin{equation}\label{3Dlinear}
		\left\{
		\begin{aligned}
            &  \partial_t u+U\partial_x u+ V\partial_y u +w\partial_zU- \partial_z^2u=\partial_x f,\\
            &  \partial_t v+U\partial_x v+ V\partial_y v +w\partial_zV- \partial_z^2v=\partial_x g,\\
			&  \partial_x u+ \partial_z^2 f = 0, \qquad\partial_x v+ \partial_z^2 g = 0,\\   
			&  \partial_x u + \partial_y v +\partial_z w= 0,\\
			&  (u,v,w,f,g)|_{z=0}=(0,0,0,0,0),\qquad \lim\limits_{z \to + \infty}(u,v,f,g)=(0,0,0,0),\\
			&  (u,v)|_{t=0}=(u_0,v_0),
		\end{aligned}
		\right.
	\end{equation} 
where the shear profiles  $U=U(t,z)$ and $V=V(t,z)$ satisfy the heat equations
	\begin{equation}\label{Heateq}
		\left\{
		\begin{aligned}
&\partial_tU-\partial_z^2U=0,\quad\partial_tV-\partial_z^2V=0,\\
&(U,V)|_{t=0}=(U_0,V_0),\quad (U,V)|_{z=0}=(0,0).
		\end{aligned}
		\right.
	\end{equation}
With the non-negative weight function $\mu_\lambda$ defined  as
\begin{equation*}
	\mu_{\lambda}=\mu_{\lambda}(t,z)\stackrel{\rm def}{=}\exp\Big(\frac{\lambda z^2}{4(1+t)}\Big), \quad 0\leq \lambda\leq 1,
\end{equation*}
we associate a weighted Lebesgue space $L_{\mu_\lambda}^2(\mathbb R_+)$  by setting 
 \begin{equation}\label{leb}
 	L_{\mu_\lambda}^2(\mathbb R_+)\stackrel{\rm def}{=}\Big\{h (z): \mathbb{R}_+\to\mathbb{R}; \ \   \norm{h}_{L_{\mu_\lambda}^2}\stackrel{\rm def}{=}\Big(\int_{\mathbb{R}_+} \mu_\lambda(z) h(z)^2dz\Big)^{1\over2}<+\infty\Big\}.
 \end{equation}
 More generally,   define the weighted Sobolev space 
\begin{align*}
H^{m}_{\mu_\lambda}(\mathbb R_+)\stackrel{\rm def}{=}\Big\{h (z): \mathbb{R}_{+}\to\mathbb{R}; \  \  \norm{h}_{H^{m}_{\mu_\lambda}}\stackrel{\rm def}{=}\Big(\sum^m_{i=0}\norm{\partial_z^{i}h}^2_{L^2_{\mu_\lambda}}\Big)^{1\over2}<+\infty\Big\}.\end{align*}
In particular, when $\lambda = 1$, we denote $\mu = \mu_1$, that is,
\begin{equation*}
 \mu=\mu(t,z)\stackrel{\rm def}{=}\exp\Big(\frac{z^2}{4(1+t)}\Big).
\end{equation*}

\begin{proposition}\label{thm:heat}
Let the weighted Sobolev space $H^m_{\mu_\lambda}(\mathbb R_+)$  be defined as above.   Assume the initial data of system \eqref{Heateq} satisfy the compatibility condition and the bound  
\begin{align*}
  \norm{(U_0,V_0)}_{H^3_{\mu_{in}}}\leq \varepsilon_1, 
\end{align*}
where $\eps_1>0$ is a constant and 
 \begin{equation*}
 	\mu_{in}\stackrel{\rm def}{=}\mu(0,z)=\exp\Big (\frac{z^2}{4}\Big).
 \end{equation*}
 Let $(U,V) \in L^\infty\big([0,+\infty[\ ; \  H^{3}_{\mu}\big)$ be the corresponding solution to the heat equations \eqref{Heateq}.  
If, in addition, the initial data satisfy
 \begin{align*}\label{compati}
 \int^{+\infty}_0 z U_0(z)dz= \int^{+\infty}_0 z V_0(z)dz=0,
\end{align*}
then there exists a constant $C_1>0$ such that   
\begin{equation}\label{EstUV}
\forall\ t\ge0,\quad\norm{\partial_z(U,V)}_{L^{\infty}_z}+\norm{z\partial_z(U,V)}_{L^{\infty}_z}+\norm{z\partial_z^2(U,V)}_{L^{\infty}_z} 
\leq C_1\eps_1 (1+t)^{-\frac{3}{2}}.
\end{equation}
\end{proposition}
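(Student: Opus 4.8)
The plan is to use the explicit solution formula for the Dirichlet heat equation on the half-line and to convert the two vanishing-moment hypotheses into time decay. Since \eqref{Heateq} decouples into two identical scalar problems, it suffices to treat $U$; the argument for $V$ is verbatim. Extending $U_0$ to the odd function $\tilde U_0$ on $\mathbb R$ (so that $\tilde U_0(-z)=-U_0(z)$), the solution is represented by $U(t,z)=(G(t)\ast\tilde U_0)(z)$ for $z>0$, where $G(t,z)=(4\pi t)^{-1/2}e^{-z^2/(4t)}$; the odd symmetry makes the kernel $G(t,z-y)-G(t,z+y)$ vanish at $z=0$, so the Dirichlet condition $U(t,0)=0$ is automatic. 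The two decisive facts are that $\int_{\mathbb R}\tilde U_0\,dy=0$, which holds by oddness, and $\int_{\mathbb R}y\,\tilde U_0\,dy=2\int_0^{+\infty}y\,U_0\,dy=0$, which is exactly the assumed moment condition. Hence $\tilde U_0$ has vanishing zeroth and first moments.

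For the large-time regime $t\ge1$ I would Taylor-expand the kernel to second order: for any $b\ge0$,
\[
\partial_z^{b}U(t,z)=\int_{\mathbb R}\partial_z^{b}G(t,z-y)\,\tilde U_0(y)\,dy
=\int_{\mathbb R}\frac{y^2}{2}\,\partial_z^{\,b+2}G(t,z-\theta y)\,\tilde U_0(y)\,dy ,
\]
for some $\theta\in(0,1)$, where the zeroth- and first-order Taylor terms drop out precisely because of the two vanishing moments. I would then use the scaling estimate $\|z^{a}\partial_z^{b}G(t,\cdot)\|_{L^\infty_z}\le C\,t^{(a-1-b)/2}$, a consequence of $G(t,z)=t^{-1/2}\Phi(z/\sqrt t)$ with $\Phi$ Schwartz, together with the bound $\int_{\mathbb R}|y|^{k}|\tilde U_0|\,dy\le C_k\varepsilon_1$, which follows from $\|U_0\|_{L^2_{\mu_{in}}}\le\varepsilon_1$ by Cauchy--Schwarz against the Gaussian weight $e^{z^2/4}$. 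Taking $b=1$ gives $\|\partial_z U\|_{L^\infty_z}\lesssim\varepsilon_1 t^{-2}$. For the weighted quantities I would split $z=(z-\theta y)+\theta y$: the shifted factor is controlled by $\|z\,\partial_z^{b+2}G\|_{L^\infty}\sim t^{(1-1-(b+2))/2}$, while the remainder produces an extra $|y|$ and a faster rate. This yields $\|z\partial_z U\|_{L^\infty_z}\lesssim\varepsilon_1 t^{-3/2}$ and $\|z\partial_z^2 U\|_{L^\infty_z}\lesssim\varepsilon_1 t^{-2}$; the slowest of the three is the $t^{-3/2}$ coming from $z\partial_z U$, which is exactly the exponent in \eqref{EstUV}.

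For the short-time regime $0\le t\le1$ the factor $(1+t)^{-3/2}$ is bounded below, so it suffices to bound each quantity by $C\varepsilon_1$. This follows from the hypothesis $(U,V)\in L^\infty([0,+\infty[;H^3_\mu)$: the one-dimensional embedding $H^1(\mathbb R_+)\hookrightarrow L^\infty(\mathbb R_+)$ controls $\|\partial_z^i U\|_{L^\infty_z}$ by $\|U\|_{H^{3}_\mu}\lesssim\varepsilon_1$ for $i\le2$, and the polynomial weight is absorbed by the Gaussian one via $z^{2}\le C\mu(t,z)$ uniformly for $t\in[0,1]$, giving $\|z\partial_z^i U\|_{L^\infty_z}\lesssim\|U\|_{H^{3}_\mu}\lesssim\varepsilon_1$ as well (here the full $H^3$ regularity is needed to handle $z\partial_z^2U$). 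Combining the two regimes produces \eqref{EstUV}.

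I expect the main technical point to be the careful bookkeeping between the algebraic gain from the vanishing moments and the loss incurred by the weight $z$: each vanishing moment improves the time decay by a factor $t^{-1/2}$, but every power of $z$ costs a factor $t^{1/2}$ (since $z\sim\sqrt t\,\eta$ in the self-similar variable), and one must check that the single surviving $z$-factor in $z\partial_z U$ lands exactly on the $t^{-3/2}$ threshold rather than something slower. The second point requiring care is ensuring the weighted moments $\int_{\mathbb R}|y|^{k}|\tilde U_0|\,dy$ are finite and of size $\varepsilon_1$, which is where the Gaussian-weighted assumption $\|U_0\|_{H^3_{\mu_{in}}}\le\varepsilon_1$, rather than a plain Sobolev bound, is essential.
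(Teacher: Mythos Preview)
Your approach is correct and genuinely different from the paper's. You argue via the explicit Dirichlet heat kernel (odd extension) and extract the decay from the vanishing moments by Taylor-expanding the kernel; this is direct and elementary. The paper instead follows the energy route of Ignatova--Vicol and Paicu--Zhang: it introduces two successive ``linearly-good'' unknowns
\[
\tilde h=\partial_z h+\tfrac{z}{2(1+t)}h,\qquad H=\tilde h+\tfrac{z}{2(1+t)}\int_0^z\tilde h\,dr,
\]
observes that $H$ solves a heat equation with extra damping $\tfrac{2}{1+t}$ (this is where the moment hypothesis enters, as it guarantees $\int_0^\infty\tilde h\,dz=0$ so that $H$ decays at infinity), runs weighted $L^2_\mu$ energy estimates to obtain $\|\partial_z^j H\|_{L^2_\mu}\lesssim(1+t)^{-(9+2j-\delta)/4}$, and finally converts back to $h$ via two lemmas relating $\partial_z^{k+1}h$ to $\partial_z^k H$ in weighted $L^2$. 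Your method is shorter for this constant-coefficient problem and yields the stated $t^{-3/2}$ with no $\delta$-loss; in fact, since the odd extension also kills the \emph{second} moment, one more Taylor order would give you $\|z\partial_z U\|_{L^\infty}\lesssim\varepsilon_1 t^{-2}$, matching the refined rate the paper records in its Proposition~\ref{thm:prioriheat}. The paper's energy argument, on the other hand, is more robust---it delivers decay in the full scale of weighted Sobolev norms and transplants readily to perturbed or nonlinear settings, which is why that machinery is standard in the Prandtl literature.

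Two small points to tighten in your write-up: the Lagrange form ``$\theta\in(0,1)$'' has $\theta$ depending on $(y,z)$, so either use the integral remainder or note explicitly that you only ever take suprema of $|w^a\partial_z^bG(t,w)|$, which makes the $\theta$-dependence harmless; and in the short-time step you implicitly use $\|U(t)\|_{H^3_\mu}\le C\varepsilon_1$, which is not literally part of the hypothesis (only membership in $L^\infty_tH^3_\mu$ is stated) but follows from the same weighted energy identity the paper uses---one line suffices.
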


\begin{remark}
The decay rate of $(1+t)^{-\frac{3}{2}}$	  in \eqref{EstUV} is not sharp; indeed, Proposition \ref{thm:prioriheat}  provides a refined rate of $ (1+t)^{-\frac{8-\delta}{4}}$ for any $0<\delta<2$. 
\end{remark}

\begin{remark}
Owing to the classical analytic smoothing effect of the heat equation, the solutions $U$ and $V$ in Proposition \ref{thm:heat} instantaneously become space-time analytic for all $t > 0$. Specifically, for any $t > 0$ and any integers $k, j \ge 0$, they satisfy the estimate
  \begin{multline}\label{smforheta}
  t^{k+\frac{j}{2}}\Big(\norm{\partial_t^k\partial_z^{j+1}(U,V)}_{L^{\infty}_z} +\norm{z\partial_t^k\partial_z^{j+1}(U,V)}_{L^{\infty}_z}  
  + \norm{z\partial_t^k\partial_z^{j+2}(U,V)}_{L^{\infty}_z}\Big)\\ 
\leq \eps_1 C_1^{k+j} (1+t)^{-\frac{3}{2}}(k+j)!.
\end{multline}
This estimate can be derived by combining the inductive argument and the proof of \eqref{EstUV}. 
\end{remark}

\begin{definition}\label{defspace}
Let $\rho>0$, the analytic function space $\mathcal{X}_{\rho}(\mathbb{{R}}_+^3)$ consists of all smooth functions $h$ which are analytic in the tangential variable $y$ and satisfy  $\|h\|_{\mathcal{X}_{\rho}} < +\infty$, with
\begin{equation*}
\|h\|^2_{\mathcal{X}_{\rho}}\stackrel{\rm def}{=}\sum_{m=0}^{+\infty}L_{\rho,m}^2\Big(\|\partial_{y}^mh\|^2_{L^2}+\|\partial_{y}^m\partial_z h\|^2_{L^2}\Big),
\end{equation*}
where, here and below,
\begin{equation}\label{lrm}
L_{\rho,m}\stackrel{\rm def}{=}\frac{\rho^{m+1}}{m!},\quad m\ge0,\ \rho >0.
\end{equation}
\end{definition}

\begin{theorem}\label{thm:linear}
Assume the coefficients $U$ and $V$ of the three-dimensional linearized Prandtl-Shercliff system \eqref{3Dlinear} satisfy the decay estimate \eqref{EstUV},  and suppose  the initial data   $u_0, v_0\in \mathcal{X}_{\rho_0}( \mathbb{{R}}_+^3)$ for some $\rho_0 >0,$ compatible to the boundary condition in \eqref{3Dlinear}.
If the constant $\eps_1$ in \eqref{EstUV} is sufficiently small, then system \eqref{3Dlinear} admits a unique global-in-time solution $(u,v)\in L^{\infty}([0,+\infty[;\mathcal{X}_{\rho})$ satisfying  
\begin{equation*}
\forall\ t\ge0,\quad \|(u,v)(t)\|_{\mathcal{X}_{\rho}}\leq \|(u_0,v_0)\|_{\mathcal{X}_{\rho_0}},
\end{equation*}
where
\begin{equation}\label{3Ddefrho}
\rho=\rho(t)\stackrel{\rm def}{=}\frac{\rho_0}{2}+\frac{\rho_0}{2}(1+t)^{-\frac12}.
\end{equation}
Moreover, there exists a constant $C_*>0$ such that
\begin{equation}\label{analy}
	\forall \ k,m,j\ge0, \quad \sup_{t\ge 0}t^{k+m+\frac{j}{2}}\norm{\partial_t^k\partial_x^m\partial_z^j(u,v)}_{\mathcal X_\rho}\leq C_*^{k+m+j+1}(k+m+j)!. 
\end{equation}
\end{theorem}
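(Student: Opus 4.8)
The plan is to run an analytic energy estimate in the single tangential variable $y$ with the time-dependent radius $\rho(t)$ of \eqref{3Ddefrho}, while treating $x$ and $z$ as purely dissipative directions. The two diffusive mechanisms are the vertical dissipation $-\partial_z^2$ and the nonlocal Shercliff dissipation hidden in $\partial_x f$: using $\partial_z^2f=-\partial_x u$ together with the boundary conditions, an integration by parts gives $(\partial_x f,u)_{L^2}=-\norm{\partial_z f}_{L^2}^2$, and after Fourier transform in $x$ (spectral decomposition of $-\partial_z^2$ with ``eigenvalue'' $\zeta^2$) the sum $\norm{\partial_z u}^2+\norm{\partial_z f}^2$ carries the symbol $\zeta^2+\xi^2/\zeta^2\ge 2|\xi|$, hence controls a full half-derivative $\norm{|D_x|^{1/2}u}^2$. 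This is exactly the order-one dissipation in $x$ exploited in Theorem \ref{thm:2D}; here it damps the single non-analytic tangential direction, whereas $y$ carries no diffusion and must be controlled analytically through the shrinking radius.

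First I would apply $\partial_y^m$ to \eqref{3Dlinear} and test the equations for $\partial_y^m u,\ \partial_y^m v$ (and their $\partial_z$-derivatives) against themselves, weighting by $L_{\rho,m}^2$ as in Definition \ref{defspace}. Because $U=U(t,z)$ and $V=V(t,z)$ are independent of $y$, the transport operators $U\partial_x$ and $V\partial_y$ are skew-symmetric and drop out of the $y$-energy, while the Shercliff terms yield the dissipation $-\norm{\partial_z\partial_y^m f}^2-\norm{\partial_z\partial_y^m g}^2$. Differentiating the $\mathcal{X}_\rho$-norm in time also produces the favourable radius contribution $2\dot\rho\sum_m\frac{m+1}{\rho}L_{\rho,m}^2(\cdots)$, which is negative since $\dot\rho<0$ by \eqref{3Ddefrho}.

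The only genuine loss of a $y$-derivative comes from the nonlocal velocity $w=-\int_0^z(\partial_x u+\partial_y v)\,d\tilde z$ entering the coupling terms $w\partial_z U$ and $w\partial_z V$: at level $m$ the factor $\partial_y^m w$ contains $\partial_y^{m+1}v$, one order above the norm. Writing $\partial_y^m w=\comi z\,(\partial_y^m w/\comi z)$ and invoking the Hardy inequality $\norm{\partial_y^m w/\comi z}_{L^2}\lesssim\norm{\partial_z\partial_y^m w}_{L^2}=\norm{\partial_x\partial_y^m u+\partial_y^{m+1}v}_{L^2}$ together with the weighted bounds $\norm{\comi z\partial_z(U,V)}_{L^\infty}+\norm{\comi z\partial_z^2(U,V)}_{L^\infty}\lesssim\eps_1(1+t)^{-\frac32}$ supplied by \eqref{EstUV}, these terms are bounded by $\eps_1(1+t)^{-\frac32}$ times the product of the Shercliff-controlled quantity $\norm{\partial_x\partial_y^m u}$ and the $y$-loss quantity $\norm{\partial_y^{m+1}v}$. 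After the shift $L_{\rho,m}\norm{\partial_y^{m+1}v}=\frac{m+1}{\rho}L_{\rho,m+1}\norm{\partial_y^{m+1}v}$, the $y$-loss takes the form $\eps_1(1+t)^{-\frac32}\rho^{-1}\sum_m(m+1)L_{\rho,m}^2(\cdots)$, whose decay rate matches that of the radius gain $\abs{\dot\rho}\sim(1+t)^{-\frac32}$; choosing $\eps_1$ small then lets the radius gain dominate and yields $\frac{d}{dt}\norm{(u,v)}_{\mathcal{X}_\rho}^2\le0$, hence $\norm{(u,v)(t)}_{\mathcal{X}_\rho}\le\norm{(u_0,v_0)}_{\mathcal{X}_{\rho_0}}$. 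Existence follows by running the same estimate on a Galerkin or parabolic-regularised approximation, and uniqueness from the identical inequality for the difference of two solutions, the system being linear.

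For the space-time analyticity \eqref{analy} I would set $u_{k,m,j}=\partial_t^k\partial_x^m\partial_z^j u$ (and likewise for $v$), which solves the same linear system with commutator sources coming from the $z$-dependent coefficients $U,V$, and run a weighted induction on $k+m+j$ with the parabolic weights $t^{k+m+\frac{j}{2}}$ — the exponent $m$ (not $m/2$) reflecting the order-one dissipation in $x$, and $\frac{j}{2}$ the usual parabolic gain in $z$. The combined vertical and Shercliff dissipation provides the smoothing that trades each power of $t$ for one derivative, while the factorial $(k+m+j)!$ is generated by accounting the binomial coefficients produced when the derivatives hit the variable coefficients and the nonlocal terms; here the smoothing estimate \eqref{smforheta} furnishes exactly the factorially growing bounds on $\partial_t^k\partial_z^j(U,V)$ needed to close the induction. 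The main obstacle throughout is the simultaneous management of three competing features — the complete absence of $y$-diffusion (handled only by the shrinking radius), the nonlocal-in-$z$ couplings through $w$ and $f$, and the factorial bookkeeping in the higher-order estimates — which must all be balanced against the single small parameter $\eps_1$.
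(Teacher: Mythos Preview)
Your proposal is correct and follows essentially the same route as the paper: apply $\partial_y^m$ and $\partial_y^m\partial_z$ to \eqref{3Dlinear}, use $\partial_z^2f=-\partial_xu$ and integration by parts to extract the Shercliff dissipation $\norm{\partial_x\partial_y^m\vec a}_{L^2}^2$ at the $\partial_z$-level, bound the $w\partial_z(U,V)$ terms by Hardy's inequality combined with \eqref{EstUV}, absorb the resulting $y$-loss via the index shift $L_{\rho,m}=\tfrac{m+1}{\rho}L_{\rho,m+1}$ into the negative radius term $\dot\rho\,|\vec a|_{\mathcal Y_\rho}^2$, and close by taking $\eps_1$ small. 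The only cosmetic difference is that the paper uses the bare weight $z$ (and $\norm{z^{-1}\partial_y^m w}_{L^2}$) rather than your $\comi z$, which matches exactly the form of \eqref{EstUV}; your Fourier heuristic $\zeta^2+\xi^2/\zeta^2\ge 2|\xi|$ is a nice motivation not made explicit in the paper, and your outline for \eqref{analy} agrees with the paper's remark that the argument is the 3D analogue of the 2D smoothing proof using \eqref{smforheta}.
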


\begin{remark}
The analyticity estimates \eqref{result:smoothing} and \eqref{analy}  hold globally in time  and persist up to the boundary. Establishing analyticity in domains with boundaries is usually  nontrivial,   since the Fourier-based approach is no longer applicable and one must carefully handle non-vanishing boundary terms.
\end{remark}

\begin{remark}
The analyticity radius may help to understand the turbulence in fluid dynamics (cf. \cite{MR1467006,MR1331063,  MR1073624, MR3215083} for instance).  The analyticity estimates \eqref{result:smoothing} and \eqref{analy}   yield  that  the analyticity radius   in $(t,x)$  is  bounded below by a constant multiple of $t,$ while in the $z$-direction the radius remains bounded below by a constant multiple of $\sqrt{t}$.  The anisotropic radii of analyticity reflect the underlying anisotropic diffusion. Specifically, the Shercliff term generates a non-local diffusion that behaves like the fractional Laplacian $(-\Delta_x)^{\frac{1}{2}}.$
\end{remark}
  
\begin{remark}
 In the three-dimensional case, although the Shercliff term may provide dissipation along one tangential direction, it remains unclear whether the results of Theorem \ref{thm:linear} extend to the nonlinear setting. The main difficulty lies in selecting a suitable weight function. To the best of our knowledge, even in the analytic setting, the global well-posedness of the 3D nonlinear system remains  open.
\end{remark}
  
\begin{remark}\label{anasmooth}
Given the validity of estimate \eqref{smforheta},  the proof of \eqref{analy} is directly analogous to that of \eqref{result:smoothing}  with no additional difficulties. We therefore omit the details here.
\end{remark}

The paper is organized as follows. Sections \ref{sec:2D} and \ref{sec:3D} are devoted to the proofs of Theorems \ref{thm:2D} and \ref{thm:linear}, respectively. Appendix \ref{sec:ineq} contains the proofs of some straightforward inequalities.

To simplify notation, throughout this paper we use the capital letter $C\ge 1$ to denote a generic positive constant that may vary from line to line. This constant depends on the Sobolev embedding constants, but is independent of  any other parameters specified in the proof.

\section{Proof of Theorem \ref{thm:2D}}\label{sec:2D}
This section is devoted to proving Theorem \ref{thm:2D}. Specifically, through the two subsections, we establish in turn the global well-posedness of system \eqref{eq:2D} and the analytic smoothing effect in all variables, thus completing the proof of Theorem \ref{thm:2D}.

\subsection{Global existence and uniqueness of system \eqref{eq:2D}}

We establish in this part the global-in-time existence and uniqueness of system \eqref{eq:2D} in the Sobolev setting. To address this, it suffices to 
 derive an   \emph{a priori} energy estimate for system \eqref{eq:2D}.   The global-in-time existence and uniqueness then follow by a standard regularization  argument. Hence,  for brevity, we only present the proof of  the following \emph{a priori} estimate  and omit the  regularization procedure.

 \begin{theorem}[\emph{A priori} estimate]\label{thm:wellposedness}
Let $\mathcal{H}^1(\mathbb R^2_+)$ be the anisotropic weighted Sobolev space  as defined in \eqref{def:weightspaceH1}. Assume the initial datum ${u}_0\in\mathcal{H}^1(\mathbb R_+^2)$, compatible with the boundary condition in system \eqref{eq:2D}. Then there exists a small constant $\varepsilon_0>0$ such that  if $u\in L^\infty([0,+\infty[;\mathcal{H}^1)$ is a global solution to system \eqref{eq:2D} and the initial datum $u_0$ satisfies
 \begin{equation}\label{ass:initial}
 \norm{u_0}_{\mathcal{H}^1}\leq \varepsilon_0,
 \end{equation}
 then
 \begin{equation}\label{ret:pri}
  \forall\ t\ge 0,\quad \norm{u(t)}_{\mathcal{H}^1}^2+\int^t_0\mathcal{D}(s)ds\leq  \varepsilon_0^2,
 \end{equation}
 where here and below,
     \begin{equation}\label{def:energy}
\mathcal{D}(t)\stackrel{\rm def}{=}\norm{\comi z \partial_z^2u(t)}_{H_x^1L_z^2}^2+\norm{\comi z\partial_xu(t)}_{H_x^1L_z^2}^2.
    \end{equation}
 \end{theorem}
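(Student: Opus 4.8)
The plan is to establish the a priori bound \eqref{ret:pri} through a multi-level weighted energy method combined with a continuity (bootstrap) argument; global existence and uniqueness then follow from the standard regularization scheme, which I omit. The starting point is to eliminate the auxiliary unknowns. The constraint $\partial_x u+\partial_z^2 f=0$ together with $f|_{z=0}=0$ and $f|_{z\to\infty}=0$ determines $f$ from $u$ as the solution of a two-point boundary value problem in $z$, while the divergence-free condition gives the non-local expression $w(t,x,z)=-\int_0^z\partial_x u(t,x,\tilde z)\,d\tilde z$. The crucial structural observation is that the Shercliff term is intrinsically dissipative: testing the equation against $u$ and integrating by parts in both variables (the boundary contributions vanishing by the conditions on $u$ and $f$), one finds $\inner{\partial_x f,\ u}_{L^2}=-\norm{\partial_z f}_{L^2}^2\le 0$, and more generally the operator $u\mapsto\partial_x f$ acts like a tangential diffusion. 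It is precisely this hidden diffusion that furnishes the tangential dissipation $\norm{\comi z\partial_x u}$ in $\mathcal{D}$.

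The heart of the argument is a four-level energy estimate, organized so that the four building blocks of $\norm{u}_{\mathcal{H}^1}^2=\norm{u}_{H^1_xL^2_z}^2+\norm{\comi z\partial_z u}_{H^1_xL^2_z}^2$ are controlled in turn. First I would test \eqref{eq:2D} against $u$, which yields $\tfrac12\tfrac{d}{dt}\norm{u}_{L^2}^2+\norm{\partial_z u}_{L^2}^2+\norm{\partial_z f}_{L^2}^2=0$, all transport contributions cancelling. Applying $\partial_x$ and testing against $\partial_x u$ produces the analogous identity for $\norm{\partial_x u}_{L^2}^2$, now with genuine nonlinear commutators. The decisive step is the weighted estimate: differentiating the equation in $z$ and testing against the multiplier $\comi z^2\partial_z u$. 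The viscous term then contributes $\norm{\comi z\partial_z^2 u}_{L^2}^2$ (modulo a lower-order $\norm{\partial_z u}_{L^2}^2$), while the Shercliff term, after integrating by parts and using $\partial_z^2 f=-\partial_x u$, contributes exactly $\norm{\comi z\partial_z^2 f}_{L^2}^2=\norm{\comi z\partial_x u}_{L^2}^2$ (modulo a lower-order $\norm{\partial_z f}_{L^2}^2$). Thus this single multiplier simultaneously generates \emph{both} dissipative quantities in $\mathcal{D}$, and applying an extra $\partial_x$ with multiplier $\comi z^2\partial_z\partial_x u$ produces the $H^1_x$ counterparts $\norm{\comi z\partial_z^2\partial_x u}_{L^2}^2$ and $\norm{\comi z\partial_x^2 u}_{L^2}^2$. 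At every level the boundary terms at $z=0$ vanish because the compatibility condition forces $\partial_z^2 u|_{z=0}=0$ (evaluate the equation at $z=0$ using $u=w=f=0$ there), and at $z=\infty$ they vanish by decay.

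The main obstacle is the control of the transport terms $u\partial_x u$ and $w\partial_z u$. As in the classical Prandtl setting, the non-local vertical velocity $w=-\int_0^z\partial_x u$ causes an apparent loss of one tangential derivative once the energy identities are differentiated. The key point, absent in the classical Prandtl equation, is that the Shercliff dissipation supplies exactly the tangential control $\norm{\comi z\partial_x u}$ and $\norm{\comi z\partial_x^2 u}$ needed to absorb this loss. Quantitatively, I would bound each nonlinear contribution using anisotropic Sobolev embeddings (one-dimensional Agmon inequalities in $x$ and in $z$) together with a Hardy-type inequality adapted to the weight $\comi z$ to estimate $w$ and $\partial_x w$ in terms of $\partial_x u$ and $\partial_x^2 u$. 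This places one factor of each cubic term into a low-order norm controlled by $\norm{u}_{\mathcal{H}^1}$ and the remaining factors into the dissipation, leading, after summing the four levels with suitably chosen coefficients, to
\begin{equation*}
\frac{d}{dt}\norm{u}_{\mathcal{H}^1}^2+\mathcal{D}(t)\le C\norm{u(t)}_{\mathcal{H}^1}\,\mathcal{D}(t).
\end{equation*}

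Finally, I would close the estimate by a continuity argument. Choosing $\varepsilon_0$ so small that $C\varepsilon_0\le\tfrac12$ and assuming on a maximal interval the bootstrap bound $\norm{u(t)}_{\mathcal{H}^1}\le\varepsilon_0$, the right-hand side is absorbed, giving $\tfrac{d}{dt}\norm{u}_{\mathcal{H}^1}^2+\tfrac12\mathcal{D}(t)\le 0$. Integrating in time and using $\norm{u_0}_{\mathcal{H}^1}\le\varepsilon_0$ yields \eqref{ret:pri}, which simultaneously strictly improves the bootstrap hypothesis, so the bound propagates to all $t\ge 0$. The most delicate bookkeeping will be in choosing the coefficients of the four-level sum so that the lower-order dissipative terms (such as $\norm{\partial_z f}_{L^2}^2$ and $\norm{\partial_z u}_{L^2}^2$) absorb the harmless positive remainders produced by the weighted integrations by parts, while keeping the full strength of $\mathcal{D}$ on the left-hand side.
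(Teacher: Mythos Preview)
Your proposal is correct and matches the paper's approach: the four-level scheme is exactly the $\mathcal{H}^1$ inner product computed in \eqref{we}--\eqref{wee}, and the key inequality $\tfrac{d}{dt}\norm{u}_{\mathcal{H}^1}^2+\mathcal{D}\le C\norm{u}_{\mathcal{H}^1}\mathcal{D}$ closes by bootstrap just as you describe. One simplification worth noting is that with all four coefficients equal to $1$ the lower-order remainders $\pm\norm{\partial_z u}_{H^1_xL^2_z}^2$ and $\pm\norm{\partial_z f}_{H^1_xL^2_z}^2$ from the unweighted and weighted integrations by parts cancel \emph{exactly}, so no coefficient tuning is needed.
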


\begin{proof}
It suffices to prove the following estimate:
\begin{equation}\label{est:pri}
\forall\ t\ge0,\quad \frac{1}{2}\frac{d}{dt}\norm{u(t)}_{\mathcal{H}^1}^2+\mathcal{D}(t) \leq C\norm{u(t)}_{\mathcal{H}^1}\mathcal{D}(t).  
\end{equation}
Assuming \eqref{est:pri} holds, a standard bootstrap argument yields assertion \eqref{ret:pri}. To see this, suppose the solution satisfies
 \begin{equation}\label{ass:pri}
  \forall\ t\ge 0,\quad   \norm{u(t)}_{\mathcal{H}^1}^2+\int^t_0\mathcal{D}(s)ds\leq  2\varepsilon_0^2.
 \end{equation}
 This with \eqref{est:pri} yields
\begin{equation*}
\forall\ t\ge0,\quad \frac{1}{2}\frac{d}{dt}\norm{u(t)}_{\mathcal{H}^1}^2+(1-\sqrt{2}C \varepsilon_0)\mathcal{D}(t) \leq 0. 
\end{equation*}
Choosing $\varepsilon_0>0$ small enough such that $1-\sqrt{2}C\varepsilon_0 \geq \frac{1}{2}$, we obtain from the above estimate  that
\begin{equation*}
\forall\ t\ge0,\quad \frac{d}{dt}\norm{u(t)}_{\mathcal{H}^1}^2+\mathcal{D}(t) \leq 0.
\end{equation*}
Integrating this in time and applying the  initial assumption \eqref{ass:initial}, we close the bootstrap argument and obtain the desired estimate \eqref{ret:pri}.

It remains to prove the key estimate \eqref{est:pri}. Recall the norm $\norm{\cdot}_{\mathcal{H}^1}$ is defined in \eqref{def:weightnorm}. Taking the $\mathcal{H}^1$-product with $u$ on both sides of the velocity equation in system \eqref{eq:2D} yields
\begin{equation}\label{we}
    \frac{1}{2}\frac{d}{dt}\norm{u}_{\mathcal{H}^1}^2-\big(\partial_z^2u,\ u\big)_{\mathcal{H}^1}=\inner{\partial_xf,\ u}_{\mathcal{H}^1}-\inner{u\partial_xu+w\partial_zu,\ u}_{\mathcal{H}^1}.
\end{equation}
Using integration by parts and observing $\partial_z^2u|_{z=0}=u|_{z=0}=0$, we obtain
\begin{multline*}
-\big(\partial_z^2u,\ u\big)_{\mathcal{H}^1}=-\big(\partial_z^2u,\ u\big)_{H_x^1L_z^2}-\big(\comi z\partial_z^3u,\ \comi z\partial_zu\big)_{H_x^1L_z^2}\\
=\norm{\partial_zu}_{H_x^1L_z^2}^2+\norm{\comi z\partial_z^2u}_{H_x^1L_z^2}^2+2\big(z\partial_z^2u,\ \partial_zu\big)_{H_x^1L_z^2}=\norm{\comi z\partial_z^2u}_{H_x^1L_z^2}^2.
\end{multline*}
Similarly, using the second equation $\partial_xu+\partial_z^2f=0$ in system \eqref{eq:2D} and the boundary condition  $\partial_z^2f|_{z=0}=f|_{z=0}=0$,  we find   
\begin{align*}
 &\inner{\partial_xf,\ u}_{\mathcal{H}^1}= \inner{\partial_xf,\ u}_{H_x^1L_z^2}+\inner{\comi z\partial_x\partial_zf,\ \comi z\partial_zu}_{H_x^1L_z^2}\\
& = -\inner{f,\ \partial_xu}_{H_x^1L_z^2}-\inner{\comi z\partial_zf,\ \comi z\partial_x\partial_zu}_{H_x^1L_z^2} \\
& = \big(f,  \partial_z^2f\big)_{H_x^1L_z^2}+\big(\comi z\partial_zf,  \comi z\partial_z^3f\big)_{H_x^1L_z^2}
 =-\norm{\comi z\partial_z^2f}_{H_x^1L_z^2}^2=-\norm{\comi z\partial_xu}_{H_x^1L_z^2}^2.
\end{align*}
Substituting the two estimates above into \eqref{we} and using the definition \eqref{def:energy} of $\mathcal{D}$, we get
\begin{equation}\label{wee}
      \frac{1}{2}\frac{d}{dt}\norm{u}_{\mathcal{H}^1}^2+\mathcal{D}=-\inner{u\partial_xu+w\partial_zu,\ u}_{\mathcal{H}^1}.  
\end{equation}
It remains to handle the right-hand side of \eqref{wee}. Recalling definition \eqref{def:weightnorm} of the norm $\norm{\cdot}_{\mathcal{H}^1}$, we write
\begin{multline}\label{eses}
    -\inner{u\partial_xu+w\partial_zu,\ u}_{\mathcal{H}^1}
=-\inner{u\partial_xu+w\partial_zu,\ u}_{H_x^1L_z^2} \\ -\inner{\comi z\partial_z(u\partial_xu+w\partial_zu),\ \comi z\partial_zu}_{H_x^1L_z^2}.
\end{multline}
For the first term on the right-hand side of \eqref{eses}, the Sobolev inequality gives  
\begin{align*}
 &\big|\inner{u\partial_xu+w\partial_zu,\ u}_{H_x^1L_z^2}\big|\\
 &\leq C\norm{\comi z^{-1}u}_{H_x^1L_z^\infty}\norm{\comi z\partial_xu}_{H_x^1L_z^2}\norm{u}_{H_x^1L_z^2}+C\norm{w}_{H_x^1L_z^\infty}\norm{\partial_zu}_{H_x^1L_z^2}\norm{u}_{H_x^1L_z^2}\\
& \leq C\norm{\partial_zu}_{H_x^1L_z^2}\norm{\comi z\partial_xu}_{H_x^1L_z^2}\norm{u}_{H_x^1L_z^2}\leq C\norm{\comi z\partial_z^2u}_{H_x^1L_z^2}\norm{\comi z\partial_xu}_{H_x^1L_z^2}\norm{u}_{\mathcal{H}^1},
\end{align*}
where the last line uses Hardy's inequality as well as the definition of $\norm{\cdot}_{\mathcal{H}^1}$ (see \eqref{def:weightnorm}).
For the second term,  using integration by parts and Hardy's inequality, we obtain
\begin{align*}
&  \big|\inner{\comi z\partial_z(u\partial_xu+w\partial_zu),\ \comi z\partial_zu}_{H_x^1L_z^2}\big|\\
  & \leq  C\norm{\comi z(u\partial_xu+w\partial_zu)}_{H_x^1L_z^2}\norm{\comi z\partial_z^2u}_{H_x^1L_z^2}\\
 & \leq C\big(\norm{u}_{H_x^1H_z^1}+\norm{\comi z\partial_zu}_{H_x^1L_z^2}\big)\norm{\comi z\partial_xu}_{H_x^1L_z^2}\norm{\comi z\partial_z^2u}_{H_x^1L_z^2}\\
 & \leq C\norm{u}_{\mathcal{H}^1}\norm{\comi z\partial_xu}_{H_x^1L_z^2}\norm{\comi z\partial_z^2u}_{H_x^1L_z^2}.
\end{align*}
Therefore, recalling the definition \eqref{def:energy} of 
$\mathcal D$, we combine the above estimates with \eqref{eses} to obtain
\begin{equation*}
-\inner{u\partial_xu+w\partial_zu,\ u}_{\mathcal{H}^1}
\leq C\norm{u}_{\mathcal{H}^1}\norm{\comi z\partial_xu}_{H_x^1L_z^2}\norm{\comi z\partial_z^2u}_{H_x^1L_z^2}\leq C\norm{u}_{\mathcal{H}^1}\mathcal{D}.    
\end{equation*}
Substituting this estimate  into \eqref{wee}  yields assertion \eqref{est:pri}. The  proof of Theorem \ref{thm:wellposedness} is thus completed. 
\end{proof}

\subsection{Proof of Theorem \ref{thm:2D}: analytic smoothing effect}\label{subsec:y}

This subsection is devoted to establishing the analytic smoothing effect in all variables. To do this, we first introduce two auxiliary norms as follows.

\begin{definition}\label{def:energyy}
Let $0<r<1$ be a parameter to be chosen later,  and let the norm $\norm{\cdot}_{\mathcal{H}^1}$ be defined as in \eqref{def:weightnorm}. We define  two auxiliary norms $\abs{\cdot}_{X_{r}}$ and $\abs{\cdot}_{Z_{r}}$  as follows:
 \begin{equation}\label{dxz}
    \left\{
    \begin{aligned}
&\abs{g}_{X_{r}}^2\stackrel{\rm def}{=}\sum_{\alpha\in\mathbb Z_+^3}M_{r,\alpha}^2 \norm{D^{\alpha} g}_{\mathcal{H}^1}^2,\\
&\abs{g}_{Z_{r}}^2\stackrel{\rm def}{=}\sum_{\alpha\in\mathbb Z_+^3}M_{r,\alpha}^2 \big(\norm{\comi zD^{\alpha} \partial_z^2g}_{H_x^1L_z^2}^2+\norm{\comi zD^{\alpha}\partial_xg}_{H_x^1L_z^2}^2\big),
    \end{aligned}
    \right.
\end{equation}
where, here and below,  for any multi-index $\alpha=(\alpha_1,\alpha_2,\alpha_3)\in\mathbb Z_+^3,$
\begin{equation}\label{def:Gamma}
      D^\alpha\stackrel{\rm def}{=}t^{\alpha_1+\alpha_2+\frac{\alpha_3}{2}}\partial_t^{\alpha_1}\partial_x^{\alpha_2}\partial_z^{\alpha_3}, 
\end{equation}
and 
\begin{equation}\label{def:Hthetam}
   M_{r,\alpha}\stackrel{\rm def}{=}\frac{r^{\abs\alpha}(\abs\alpha+1)^4}{\abs\alpha!}. 
\end{equation}
\end{definition}

 With the norms given above, we now state the main result concerning the analytic smoothing effect   as follows.

\begin{proposition}\label{prop:y}
Suppose the initial datum $u_0$ satisfies the assumptions in  Theorem \ref{thm:wellposedness} and let 
  $u\in L^\infty([0,+\infty[;\mathcal{H}^1)$ be the solution to system \eqref{eq:2D}, constructed in Theorem \ref{thm:wellposedness}  and satisfying estimate \eqref{ret:pri}. Then there exists a small constant $0<r<1$ such that, shrinking the number $\varepsilon_0$ in Theorem \ref{thm:wellposedness} if necessary,
 \begin{equation}\label{est:y}
 \forall\ t\ge0,\quad   \abs{u(t)}_{X_{r}}^2+\int^t_0\abs{u(s)}_{Z_{r}}^2ds\leq \varepsilon_0^2,
 \end{equation}
where the norms $\abs{\cdot}_{X_{r}}$ and $\abs{\cdot}_{Z_{r}}$ are defined as in \eqref{dxz}.
\end{proposition}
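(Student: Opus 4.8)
The plan is to derive, for the solution $u$ furnished by Theorem~\ref{thm:wellposedness}, a single differential inequality for the weighted analytic energy $\abs{u(t)}_{X_r}^2$ whose production is dominated by the dissipation $\abs{u(t)}_{Z_r}^2$, and then to close \eqref{est:y} by the same bootstrap used for \eqref{ret:pri}. First I would fix $\alpha=(\alpha_1,\alpha_2,\alpha_3)\in\mathbb Z_+^3$, apply $\partial_t^{\alpha_1}\partial_x^{\alpha_2}\partial_z^{\alpha_3}$ to the velocity equation of \eqref{eq:2D}, multiply by $t^{\alpha_1+\alpha_2+\alpha_3/2}$ so as to reconstruct the operator $D^\alpha$ of \eqref{def:Gamma}, and take the $\mathcal H^1$-inner product against $D^\alpha u$. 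Since $D^\alpha$ commutes with $\partial_x$ and $\partial_z$ and the constraint $\partial_xu+\partial_z^2f=0$ is preserved under $D^\alpha$, the diffusion and Shercliff terms reproduce \emph{exactly} the two dissipation contributions in \eqref{dxz}, through the same integration by parts and the vanishing boundary values $D^\alpha u|_{z=0}=D^\alpha\partial_z^2u|_{z=0}=0$ used in Theorem~\ref{thm:wellposedness}. Differentiating the time weight produces one additional term, so that summing against $M_{r,\alpha}^2$ gives
\[
\frac12\frac{d}{dt}\abs{u}_{X_r}^2+\abs{u}_{Z_r}^2=\sum_{\alpha\in\mathbb Z_+^3}\Big(\alpha_1+\alpha_2+\tfrac{\alpha_3}{2}\Big)\frac{M_{r,\alpha}^2}{t}\norm{D^\alpha u}_{\mathcal H^1}^2-\sum_{\alpha\in\mathbb Z_+^3}M_{r,\alpha}^2\inner{D^\alpha(u\partial_xu+w\partial_zu),\ D^\alpha u}_{\mathcal H^1},
\]
where the first sum is the commutator generated by the time weight and the second is the nonlinearity; I would carry this out for finite truncations $\abs{\alpha}\le N$ first and then pass to the limit, so all manipulations are legitimate.

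The heart of the matter is the commutator sum, which carries the singular factor $t^{-1}$ and must be absorbed into $\abs{u}_{Z_r}^2$ uniformly in $t$. The weight in \eqref{def:Gamma} is chosen precisely so that $t^{-1}\norm{D^\alpha u}_{\mathcal H^1}^2$ is scale-consistent with the dissipation. For the normal direction I would bound each term by an interpolation inequality trading two $z$-derivatives between $D^\alpha u$ and the dissipation field $\comi zD^\alpha\partial_z^2u$, using $D^\alpha u|_{z=0}=0$ and Hardy's inequality; the excess then lands on the dissipation at order $\abs{\beta}=\abs{\alpha}-2$, and the decay of the weight ratio
\[
\frac{M_{r,\alpha}}{M_{r,\beta}}=\frac{r\,(\abs{\alpha}+1)^4}{\abs{\alpha}^{5}}\lesssim\frac{r}{\abs{\alpha}}
\]
converts the factor $(\alpha_1+\alpha_2+\tfrac{\alpha_3}2)\le\abs{\alpha}$ into a net gain of a power of $r$, keeping this part below $Cr^{2}\abs{u}_{Z_r}^2$. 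The hard part will be the tangential (and temporal) piece of the commutator: the time weight assigns one full power of $t$ to each $\partial_x$, whereas the Shercliff energy produces only the first-order dissipation $\comi zD^\alpha\partial_xu$, so a naive reduction leaves an uncompensated factor of $t$ and fails for large time. The resolution I would pursue is to exploit the coupling $\partial_xu=-\partial_z^2f$ together with the anisotropic scaling of the Shercliff layer, in which $\partial_z$ behaves like $\abs{\partial_x}^{1/2}$; this makes the seemingly first-order tangential dissipation act as a half-order gain, matching the $t^{1}$ weight and the $(-\Delta_x)^{1/2}$-type diffusion highlighted in the introduction, so that the interpolation excess again falls on lower-order dissipation with an $r$-gain. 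Temporal derivatives would be converted into $\partial_z^2u$, $\partial_xf$ and nonlinear terms via the equation before applying the same scheme. Making this tangential estimate uniform in $t$ and compatible with the factorial weights is, I expect, the main technical obstacle.

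For the nonlinear sum I would expand by the Leibniz rule — valid for $D^\alpha$ since the exponent $\alpha_1+\alpha_2+\tfrac{\alpha_3}2$ is additive, giving $D^\alpha(hg)=\sum_{\beta\le\alpha}\binom{\alpha}{\beta}D^\beta h\,D^{\alpha-\beta}g$ — and estimate each product in $\mathcal H^1$ exactly as in the proof of Theorem~\ref{thm:wellposedness}, placing the dissipation-type factors $\comi zD^{\cdot}\partial_xu$ and $\comi zD^{\cdot}\partial_z^2u$ into $Z_r$ and the remaining low-order factor into $X_r$, and controlling the nonlocal $w=-\int_0^z\partial_xu\,d\tilde z$ pointwise in $z$ by $\norm{\partial_xu}$ through Hardy's inequality. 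The combinatorial factors $\binom{\alpha}{\beta}$ and the weight ratios $M_{r,\alpha}/(M_{r,\beta}M_{r,\alpha-\beta})$ are summable precisely because the polynomial correction $(\abs{\alpha}+1)^4$ in \eqref{def:Hthetam} dominates the number of splittings and the binomial loss; this bounds the nonlinear sum by $C\abs{u}_{X_r}\abs{u}_{Z_r}^2$, hence by $C\varepsilon_0\abs{u}_{Z_r}^2$ through \eqref{ret:pri}.

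Combining the three estimates yields $\tfrac12\tfrac{d}{dt}\abs{u}_{X_r}^2+\big(1-Cr^2-C\varepsilon_0\big)\abs{u}_{Z_r}^2\le0$; choosing $r$ small and then $\varepsilon_0$ small makes the dissipation coefficient at least $\tfrac12$. Since every term with $\abs{\alpha}\ge1$ vanishes at $t=0$ owing to the positive power of $t$ in $D^\alpha$, while $M_{r,0}=1$ gives $\abs{u(0)}_{X_r}^2=\norm{u_0}_{\mathcal H^1}^2\le\varepsilon_0^2$, integrating in time and running the bootstrap of Theorem~\ref{thm:wellposedness} yields \eqref{est:y}.
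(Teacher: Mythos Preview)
Your overall architecture is right and matches the paper's, but there is a genuine gap: the claim that $D^\alpha u|_{z=0}=D^\alpha\partial_z^2u|_{z=0}=0$ is \emph{false} as soon as $\alpha_3\ge1$. Only the even traces $u|_{z=0}=\partial_z^2u|_{z=0}=0$ are available; the wall shear $\partial_zu|_{z=0}$ does not vanish, and the nonlinearity prevents higher even traces like $\partial_z^4u|_{z=0}$ from vanishing as well. Consequently the integration by parts in the $\mathcal H^1$-pairing produces nontrivial boundary contributions $\big(D^\alpha\partial_z^{k+1}u|_{z=0},\,D^\alpha\partial_z^ku|_{z=0}\big)_{H^1_x}$ for $k=0,1$, and analogous ones for $f$; these are the terms $S_3,S_4$ in \eqref{J1-5} and account for two of the five pieces on the right of \eqref{est:yy}. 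Controlling them requires the one-dimensional trace inequality, a shift of one normal derivative via \eqref{facttwo} to extract a factor $r^{1/2}$, and a substitution of the equation to rewrite $\partial_z^3u$ in terms of $\partial_t\partial_zu$, $\partial_x\partial_zf$ and the nonlinearity (Lemmas~\ref{lem:J3}--\ref{lem:J4}); the outcome is $Cr^{1/2}\abs{u}_{Z_r}^2+C\abs{u}_{X_r}^{1/2}\abs{u}_{Z_r}^2$, not zero. Without this piece your energy identity is simply incorrect.

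A secondary issue: your treatment of the tangential/temporal commutator invokes a ``half-order gain via $\partial_z\sim|\partial_x|^{1/2}$'' which is not the mechanism at work. For $\alpha_3=0$, $\alpha_2\ge1$ one uses the exact identity $t^{-1}D^\alpha=D^{\alpha-(0,1,0)}\partial_x$ to write $t^{-1}\norm{D^\alpha u}_{\mathcal H^1}^2$ as the $\mathcal H^1$-pairing of $D^{\alpha-(0,1,0)}\partial_xu$ against $D^\alpha u$; one integration by parts in $z$ and Hardy's inequality then give \eqref{s52}, with both factors sitting directly in $Z_r$ at adjacent orders and the weight relation $|\alpha|M_{r,\alpha}\le CrM_{r,\alpha-(0,1,0)}$ supplying the smallness. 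The temporal case $\alpha_2=\alpha_3=0$, $\alpha_1\ge1$ is handled by substituting the equation for $\partial_tu$; no interpolation trading two $z$-derivatives (and hence no jump $|\beta|=|\alpha|-2$) is used or needed here.
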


Before  proving Proposition \ref{prop:y}, we first list several estimates that will be used frequently.
By the definition of the $\mathcal H^1$-norm in \eqref{def:weightnorm}, 
\begin{equation*}
	\norm{D^{\alpha} \partial_zg}_{\mathcal{H}^1}^2=\norm{D^{\alpha} \partial_z g}_{H_x^1L_z^2}^2+\norm{\comi z D^{\alpha} \partial_z^2g}_{H_x^1L_z^2}^2,
\end{equation*}
which, together with Hardy's inequality, implies 
\begin{equation}\label{xz}
  \abs{\partial_zg}_{X_{r}}^2\leq C \abs{g}_{Z_{r}}^2. 
\end{equation}
From the definition of $M_{r,\alpha}$ in \eqref{def:Hthetam}, it follows that  for any multi-indices $\alpha,\beta\in\mathbb Z_+^3,$ 
\begin{equation}\label{sdf}
  M_{r,\alpha}= M_{r,\beta} \ \textrm{ if }\  \abs\beta=\abs\alpha,\quad \abs\alpha M_{r,\alpha}\leq  Cr M_{r,\beta} \ \textrm{ if }\  \abs{\beta}=\abs{\alpha}-1.
\end{equation}
Recall  that $\tilde\alpha=\alpha-(1,0,0)$ and $\alpha_*=\alpha-(0,0,1)$ for $\alpha=(\alpha_1,\alpha_2,\alpha_3)\in\mathbb Z_+^3$.    Then 
\begin{equation}\label{facttwo}
 D^\alpha=tD^{\tilde\alpha}\partial_t   \  \textrm{ and }\  D^\alpha=t^{\frac12}D^{\alpha_*}\partial_z.
\end{equation}
 We will  use the  following version of Young's inequality for discrete convolution:
\begin{equation}\label{young}
\bigg[\sum^{+\infty}_{m=0}\Big{(}\sum^m_{j=0}p_jq_{m-j}\Big{)}^2\bigg]^{\frac12}\leq \Big{(}\sum^{+\infty}_{m=0}q_m^2\Big{)}^{\frac12}\sum^{+\infty}_{j=0}p_j,
\end{equation}
where $\{p_j\}_{j\ge 0}$ and $\{q_j\}_{j\ge 0}$ are   sequences of nonnegative real numbers.
 
We now begin to prove Proposition \ref{prop:y}. For given multi-index $\alpha=(\alpha_1,\alpha_2,\alpha_3)\in\mathbb Z_+^3$, we apply $D^{\alpha}$ to the velocity equation in system \eqref{eq:2D} and then take the $\mathcal{H}^1$-product with $D^{\alpha} u$ to derive that
\begin{multline}\label{kp}
\frac{1}{2}\frac{d}{dt}\norm{D^{\alpha} u}_{\mathcal{H}^1}^2-\big(D^{\alpha}\partial_z^2u+D^{\alpha}\partial_xf,\ D^{\alpha} u\big)_{\mathcal{H}^1}\\
=-\inner{D^{\alpha}(u\partial_xu+w\partial_zu),\ D^{\alpha} u}_{\mathcal{H}^1}+\frac{2\alpha_1+2\alpha_2+\alpha_3}{2t}\norm{D^{\alpha} u}_{\mathcal{H}^1}^2.        
\end{multline}
Using integration by parts yields
\begin{align*}
 -\big(D^{\alpha}\partial_z^2u,\ D^{\alpha} u\big)_{H_x^1L_z^2}=\norm{D^{\alpha}\partial_zu}_{H_x^1L_z^2}^2+  \inner{D^{\alpha}\partial_zu|_{z=0},\ D^{\alpha} u|_{z=0}}_{H_x^1},  
\end{align*}
and
\begin{align*}
&-\big(\comi zD^{\alpha}\partial_z^3u,\ \comi zD^{\alpha}\partial_z u\big)_{H_x^1L_z^2}\\
&= \norm{\comi zD^{\alpha}\partial_z^2u}_{H_x^1L_z^2}^2+2\big(zD^{\alpha}\partial_z^2u,\ D^{\alpha}\partial_zu\big)_{H_x^1L_z^2}+\big(D^{\alpha}\partial_z^2u|_{z=0},\ D^{\alpha} \partial_zu|_{z=0}\big)_{H_x^1}\\
&= \norm{\comi zD^{\alpha}\partial_z^2u}_{H_x^1L_z^2}^2-\norm{D^{\alpha}\partial_zu}_{H_x^1L_z^2}^2+\big(D^{\alpha}\partial_z^2u|_{z=0},\ D^{\alpha} \partial_zu|_{z=0}\big)_{H_x^1}.
\end{align*}
Recalling the definition  of $\norm{\cdot}_{\mathcal{H}^1}$ in \eqref{def:weightnorm}  and combining these identities, we obtain
\begin{equation}\label{modelone}
\begin{aligned}
   -\big(D^{\alpha}\partial_z^2u,\ D^{\alpha} u\big)_{\mathcal{H}^1}=& \norm{\comi zD^{\alpha}\partial_z^2u}_{H_x^1L_z^2}^2+\sum_{k=0}^1\big(D^{\alpha}\partial_z^{k+1}u|_{z=0},\ D^{\alpha} \partial_z^ku|_{z=0}\big)_{H_x^1}.
\end{aligned}
\end{equation}
Similarly, we derive from  $\partial_xu+\partial_z^2f=0$ that 
\begin{equation*}
   -\big(D^{\alpha}\partial_xf,\ D^{\alpha} u\big)_{\mathcal{H}^1}=-\big(D^{\alpha} f,\ D^{\alpha} \partial_z^2f\big)_{H_x^1L_z^2}-\big(\comi zD^{\alpha} \partial_zf,\ \comi zD^{\alpha} \partial_z^3f\big)_{H_x^1L_z^2},
\end{equation*}
which enables us to  repeat the proof of \eqref{modelone} to conclude that
\begin{align*}
   -\inner{D^{\alpha}\partial_xf,\ D^{\alpha} u}_{\mathcal{H}^1}=& \norm{\comi zD^{\alpha}\partial_z^2f}_{H_x^1L_z^2}^2
   +\sum_{k=0}^1\big(D^{\alpha}\partial_z^{k+1}f|_{z=0},\ D^{\alpha} \partial_z^kf|_{z=0}\big)_{H_x^1}\\
   =& \norm{\comi zD^{\alpha}\partial_xu}_{H_x^1L_z^2}^2
   +\sum_{k=0}^1\big(D^{\alpha}\partial_z^{k+1}f|_{z=0},\ D^{\alpha} \partial_z^kf|_{z=0}\big)_{H_x^1}.
\end{align*}
Substituting  this and  \eqref{modelone} into \eqref{kp},    then multiplying by $M_{r,\alpha}^2$ and summing over $\alpha\in\mathbb Z_+^3$, we obtain  
\begin{multline*} 
\frac{1}{2}\frac{d}{dt}\sum_{ \alpha\in\mathbb Z_+^3 }M_{r,\alpha}^2 \norm{D^{\alpha} u}_{\mathcal{H}^1}^2+\sum_{ \alpha\in\mathbb Z_+^3 }M_{r,\alpha}^2 \norm{\comi zD^{\alpha}\partial_z^2u}_{H_x^1L_z^2}^2\\
+\sum_{ \alpha\in\mathbb Z_+^3 } M_{r,\alpha}^2 \norm{\comi zD^{\alpha}\partial_xu}_{H_x^1L_z^2}^2\leq \sum_{j=1}^5S_j,
\end{multline*}
that is,  recalling the definitions of  $\abs{\cdot}_{X_{r}}$ and  $\abs{\cdot}_{Z_{r}}$ in \eqref{dxz},
\begin{equation}\label{est:yy}
	\frac{1}{2}\frac{d}{dt}\abs{u}_{X_r}^2+\abs{u}_{Z_r}^2\leq \sum_{j=1}^5S_j, 
\end{equation}
where
\begin{equation}\label{J1-5}
    \left\{
    \begin{aligned}
&S_1=-\sum_{\substack{\alpha\in\mathbb Z_+^3\\ \alpha_3=0}}M_{r,\alpha}^2\inner{D^{\alpha}(u\partial_xu+w\partial_zu),\ D^{\alpha} u}_{\mathcal{H}^1},\\
&S_2=-\sum_{\substack{\alpha\in\mathbb Z_+^3\\ \alpha_3\ge 1}}M_{r,\alpha}^2\inner{D^{\alpha}(u\partial_xu+w\partial_zu),\ D^{\alpha} u}_{\mathcal{H}^1},\\
&S_3=-\sum_{0\leq k\leq 1}\sum_{\alpha\in\mathbb Z_+^3}M_{r,\alpha}^2\big(D^{\alpha}\partial_z^{k+1}u|_{z=0},\ D^{\alpha} \partial_z^ku|_{z=0}\big)_{H_x^1},\\
&S_4=-\sum_{0\leq k\leq 1}\sum_{\alpha\in\mathbb Z_+^3}M_{r,\alpha}^2\big(D^{\alpha}\partial_z^{k+1}f|_{z=0},\ D^{\alpha} \partial_z^kf|_{z=0}\big)_{H_x^1},\\
&S_5=\sum_{\alpha\in\mathbb Z_+^3}\frac{2\alpha_1+2\alpha_2+\alpha_3}{2t}M_{r,\alpha}^2\norm{D^{\alpha} u}_{\mathcal{H}^1}^2.
    \end{aligned}
    \right.
\end{equation}

The rest of this subsection is devoted to estimating the terms $S_j$ for $1\leq j\leq 5$.  The proofs of these estimates are presented in the following five lemmas.

\begin{lemma}
	[Estimate on $S_1$] \label{j1++}
	Let $S_1$ be given in \eqref{J1-5}. It holds that
	\begin{equation*}
		S_1\leq C\abs{u}_{X_r}\abs{u}_{Z_r}^2,
	\end{equation*}
where the norms $\abs{\cdot}_{{X}_r}$ and $\abs{\cdot}_{{Z}_r}$ are defined as in   \eqref{dxz}.     
\end{lemma}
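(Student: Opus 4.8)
The plan is to reduce $S_1$ to the \emph{a priori} nonlinear estimate of Theorem~\ref{thm:wellposedness}, applied level by level in the analytic norm, and then to resum using the combinatorial properties of the weights $M_{r,\alpha}$ together with Young's inequality \eqref{young}. The crucial simplification in $S_1$ is that $\alpha_3=0$, so $D^\alpha=t^{\abs\alpha}\partial_t^{\alpha_1}\partial_x^{\alpha_2}$ carries \emph{no} $z$-derivative. Consequently the Leibniz rule, combined with the factorisation $t^{\abs\alpha}=t^{\abs\beta}t^{\abs{\alpha-\beta}}$, yields the clean identities
\[
D^\alpha(u\partial_x u)=\sum_{\beta\le\alpha}\binom{\alpha}{\beta}(D^\beta u)\,(D^{\alpha-\beta}\partial_x u),\qquad D^\alpha(w\partial_z u)=\sum_{\beta\le\alpha}\binom{\alpha}{\beta}(D^\beta w)\,(D^{\alpha-\beta}\partial_z u),
\]
in which every factor is again of the form $D^\gamma$ applied to $u$ (or to a first derivative of $u$).

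Next I would estimate each term of the Leibniz expansion exactly as in the proof of \eqref{est:pri}. Integrating by parts in the weighted part of the $\mathcal H^1$-inner product moves the $z$-derivative onto the test factor $D^\alpha u$, so that it contributes the dissipative quantity $\norm{\comi z D^\alpha\partial_z^2 u}_{H_x^1L_z^2}$; the remaining product is then controlled by the Sobolev embedding $H_x^1\hookrightarrow L_x^\infty$ in the tangential variable and by Hardy's inequality in $z$. For the convective term one arrives at the representative bound
\[
\big|\big(D^\beta u\,D^{\alpha-\beta}\partial_x u,\ D^\alpha u\big)_{\mathcal H^1}\big|\le C\,\norm{D^\beta u}_{\mathcal H^1}\,\norm{\comi z D^{\alpha-\beta}\partial_x u}_{H_x^1L_z^2}\,\norm{\comi z D^\alpha\partial_z^2 u}_{H_x^1L_z^2},
\]
while for the term involving $w$ one first uses the reconstruction $w=-\int_0^z\partial_x u\,d\tilde z$, so that $D^\beta w$ is controlled pointwise in $z$ by $\norm{\comi z D^\beta\partial_x u}_{L_z^2}$. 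In either case the product splits into exactly one low-order factor measured in $\mathcal H^1$ (an $\abs{\cdot}_{X_r}$ summand) and two dissipative factors of the form $\norm{\comi z D^\gamma\partial_x u}$ or $\norm{\comi z D^\gamma\partial_z^2 u}$ (two $\abs{\cdot}_{Z_r}$ summands).

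It remains to resum. Multiplying by $M_{r,\alpha}^2$ and invoking the elementary bound $\binom{\alpha}{\beta}\le\binom{\abs\alpha}{\abs\beta}$ together with the definition \eqref{def:Hthetam} gives
\[
\binom{\alpha}{\beta}M_{r,\alpha}\le M_{r,\beta}M_{r,\alpha-\beta}\,\frac{(\abs\alpha+1)^4}{(\abs\beta+1)^4(\abs{\alpha-\beta}+1)^4},
\]
which distributes the weight among the three factors at the levels $\abs\beta$, $\abs{\alpha-\beta}$ and $\abs\alpha$. Pairing one copy of $M_{r,\alpha}$ with the test slot and applying Cauchy--Schwarz in $\alpha$ extracts a factor $\abs{u}_{Z_r}$; collapsing the multi-index sums to sums over the lengths $\abs\beta$ and $\abs{\alpha-\beta}$ (for $\alpha_3=0$ there are only $O(\abs\alpha)$ two-dimensional multi-indices at each level) and then applying Young's inequality \eqref{young}, with the $\mathcal H^1$-factor placed in the $\ell^1$ slot and the dissipative factor in the $\ell^2$ slot, yields $S_1\le C\abs{u}_{X_r}\abs{u}_{Z_r}^2$.

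The step I expect to be the main obstacle is precisely this last resummation: one must verify that the polynomial weights $(\abs\alpha+1)^4$ leave enough room to absorb both the binomial coefficients and the $O(\abs\alpha)$ multiplicity of multi-indices at each level, so that after Cauchy--Schwarz the surviving length-series is genuinely summable and Young's inequality \eqref{young} applies. Keeping the correct $X_r$/$Z_r$ bookkeeping---ensuring every term produces exactly one $\abs{\cdot}_{X_r}$ factor and two $\abs{\cdot}_{Z_r}$ factors, including the non-local $w$-contribution---is where the care lies; the absence of $z$-derivatives in $D^\alpha$ (the defining feature $\alpha_3=0$ of $S_1$) is what makes this a faithful analytic analogue of the \emph{a priori} estimate, with the genuinely new $z$-derivative bookkeeping postponed to the estimate of $S_2$.
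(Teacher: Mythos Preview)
Your proposal is correct and follows essentially the same route as the paper: Leibniz expansion exploiting $\alpha_3=0$, per-term bounds inherited from the \emph{a priori} estimate (Sobolev in $x$, Hardy in $z$, integration by parts in the weighted part of the $\mathcal H^1$-inner product), the combinatorial splitting $\binom{\alpha}{\beta}M_{r,\alpha}\le M_{r,\beta}M_{r,\alpha-\beta}\,(\abs\alpha+1)^4(\abs\beta+1)^{-4}(\abs{\alpha-\beta}+1)^{-4}$, Cauchy--Schwarz in $\alpha$, and Young's inequality for the convolution. The paper records two representative term-wise bounds rather than one (the unweighted $H^1_xL^2_z$ part yields the $X_r$-factor on the test slot and two $Z_r$-factors in the convolution, while the weighted part gives the arrangement you wrote), but both resum identically.

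One minor simplification over what you anticipate as the obstacle: there is no need to collapse the multi-index sums to length sums or to track the $O(\abs\alpha)$ multiplicity explicitly. Young's inequality \eqref{young} applies verbatim to the multi-index convolution $\sum_{\beta\le\alpha}$, and the $\ell^1$-slot then requires only $\sum_{\gamma\in\mathbb Z_+^3}(\abs\gamma+1)^{-8}<\infty$ after a Cauchy--Schwarz; the exponent $4$ in \eqref{def:Hthetam} is chosen precisely so that this series dominates the $O(\abs\gamma^2)$ growth of the number of $3$-multi-indices at each length.
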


\begin{proof}
 For fixed multi-index $\alpha=(\alpha_1,\alpha_2,\alpha_3)\in\mathbb Z_+^3$ with $\alpha_3=0$, Leibniz's formula gives
\begin{multline*}
	-\inner{D^{\alpha}(u\partial_xu+w\partial_zu),\ D^{\alpha}u}_{\mathcal{H}^1}\\
 =-\sum_{\beta\leq\alpha}\binom{\alpha}{\beta}\big((D^{\beta}u)D^{\alpha-\beta}\partial_xu+(D^{\alpha-\beta}w)D^{\beta}\partial_zu,\ D^{\alpha}u\big)_{\mathcal{H}^1}.
\end{multline*}
By repeating an argument analogous to that after \eqref{eses}, we obtain that for $\alpha=(\alpha_1,\alpha_2,\alpha_3)\in\mathbb Z_+^3$ with $\alpha_3=0$,
\begin{multline*}
	\big|\big((D^{\beta}u)D^{\alpha-\beta}\partial_xu+(D^{\alpha-\beta}w)D^{\beta}\partial_zu,\ D^{\alpha}u\big)_{H_x^1L_z^2}	\big|\\
 \leq  C\norm{\comi zD^{\beta}\partial_z^2u}_{H_x^1L_z^2}\norm{\comi z D^{\alpha-\beta}\partial_xu}_{H_x^1L_z^2}\norm{D^{\alpha} u}_{\mathcal{H}^1},
\end{multline*}
and 
\begin{align*}
	&\big|\big(\comi z \partial_z\big [(D^{\beta}u)D^{\alpha-\beta}\partial_xu+(D^{\alpha-\beta}w)D^{\beta}\partial_zu\big],\ \comi z\partial_z D^{\alpha}u\big)_{H_x^1L_z^2}	\big|\\
	&\leq C\Big(\norm{ D^{\beta}u}_{H_x^1H_z^1} +\norm{ \comi z \partial_z D^{\beta}u}_{H_x^1L_z^2}\Big) \norm{\comi z  D^{\alpha-\beta}\partial_xu}_{H_x^1L_z^2}\norm{\comi z\partial_z^2 D^{\alpha} u}_{H_x^1L_z^2}	  \\
	 & \leq C\norm{D^{\beta}u}_{\mathcal H^1}\norm{\comi z D^{\alpha-\beta}\partial_xu}_{H_x^1L_z^2}\norm{\comi z\partial_z^2 D^{\alpha} u}_{H_x^1L_z^2}.
	 \end{align*}
Hence, recalling   definition \eqref{def:weightnorm} of $\norm{\cdot}_{\mathcal H^1},$ we combine the above estimates to obtain 
 \begin{equation}\label{decom:S1}
     \begin{aligned}
S_1\leq& C \sum_{\alpha\in\mathbb Z_+^3}\sum_{\beta\leq\alpha}\binom{\alpha}{\beta}M_{r,\alpha}^2\norm{\comi zD^{\beta}\partial_z^2u}_{H_x^1L_z^2}\norm{\comi zD^{\alpha-\beta}\partial_xu}_{H_x^1L_z^2}\norm{D^{\alpha}u}_{\mathcal{H}^1}\\
&+C\sum_{\alpha\in\mathbb Z_+^3}\sum_{\beta\leq\alpha}\binom{\alpha}{\beta}M_{r,\alpha}^2\norm{D^{\beta}u}_{\mathcal{H}^1}\norm{\comi zD^{\alpha-\beta}\partial_xu}_{H_x^1L_z^2}\norm{\comi zD^{\alpha}\partial_z^2u}_{H_x^1L_z^2}\\
\stackrel{\rm def}{=}&S_{1,1}+S_{1,2}.
     \end{aligned}
 \end{equation}
 Observe for any multi-indices $\alpha,\beta\in\mathbb Z_+^3$ with $\beta\leq\alpha$,
 \begin{equation}
 	\label{factor}
 	\binom{\alpha}{\beta}\leq \binom{\abs \alpha}{\abs\beta}.
 \end{equation}
 Then  we have
 \begin{equation*}
 	\begin{aligned}
&\binom{\alpha}{\beta}\frac{M_{r,\alpha}}{M_{r,\beta}M_{r,\alpha-\beta}}\\
&\leq \frac{\abs\alpha!}{\abs\beta!(\abs\alpha-\abs\beta)!}\frac{r^{\abs\alpha}(\abs\alpha+1)^4}{\abs\alpha!}\frac{\abs\beta!}{r^{\abs\beta}(\abs\beta+1)^4}\frac{(\abs\alpha-\abs\beta)!}{r^{\abs\alpha-\abs\beta}(\abs\alpha-\abs\beta+1)^4}\\
&\leq \frac{(\abs\alpha+1)^4}{(\abs\beta+1)^4(\abs\alpha-\abs\beta+1)^4}\leq \frac{C}{(\abs\beta+1)^4}+\frac{C}{(\abs\alpha-\abs\beta+1)^4}.
    \end{aligned}
 \end{equation*}
Combining this with the identity
 \begin{equation*}
  \binom{\alpha}{\beta} M_{r,\alpha} =	\binom{\alpha}{\beta}\frac{M_{r,\alpha}}{M_{r,\beta}M_{r,\alpha-\beta}}   M_{r,\beta} M_{r,\alpha-\beta},
 \end{equation*}
 we deduce that
\begin{equation}\label{S111}
    \begin{aligned}
&S_{1,1} \leq C\bigg[\sum_{\alpha\in\mathbb Z_+^3}\bigg(\sum_{\beta\leq\alpha}\binom{\alpha}{\beta}M_{r,\alpha}\norm{\comi zD^{\beta}\partial_z^2u}_{H_x^1L_z^2}\norm{\comi zD^{\alpha-\beta}\partial_xu}_{H_x^1L_z^2}\bigg)^2\bigg]^{\frac12}\abs{u}_{X_r} \\
&\leq C\bigg[\sum_{\alpha\in\mathbb Z_+^3} \bigg(\sum_{\beta\leq\alpha}\frac{M_{r,\beta}\norm{\comi zD^{\beta}\partial_z^2u}_{H_x^1L_z^2}}{(\abs\beta+1)^4}M_{r,\alpha-\beta}\norm{\comi zD^{\alpha-\beta}\partial_xu}_{H_x^1L_z^2}\bigg)^2\bigg]^\frac{1}{2}\abs{u}_{X_r}\\
&\ \ +  C\bigg[\sum_{\alpha\in\mathbb Z_+^3} \bigg(\sum_{\beta\leq\alpha}M_{r,\beta}\norm{\comi zD^{\beta}\partial_z^2u}_{H_x^1L_z^2}\frac{M_{r,\alpha-\beta}\norm{\comi zD^{\alpha-\beta}\partial_xu}_{H_x^1L_z^2}}{(\abs\alpha-\abs\beta+1)^4}\bigg)^2\bigg]^\frac{1}{2}\abs{u}_{X_r},
    \end{aligned}
\end{equation}
where the first inequality uses the Cauchy inequality and the definition  of $\abs{\cdot}_{X_r}$ in \eqref{dxz}. 
Moreover, by Young's inequality \eqref{young} for discrete convolution and the definition of $\abs{\cdot}_{Z_r}$ in \eqref{dxz}, one has
 \begin{equation}\label{applyyoung}
     \begin{aligned}
&\bigg[\sum_{\alpha\in\mathbb Z_+^3} \bigg(\sum_{\beta\leq\alpha}\frac{M_{r,\beta}\norm{\comi zD^{\beta}\partial_z^2u}_{H_x^1L_z^2}}{(\abs\beta+1)^4}M_{r,\alpha-\beta}\norm{\comi zD^{\alpha-\beta}\partial_xu}_{H_x^1L_z^2}\bigg)^2\bigg]^\frac{1}{2}\\
&\leq C\bigg(\sum_{\alpha\in\mathbb Z_+^3}\frac{M_{r,\alpha}\norm{\comi zD^{\alpha}\partial_z^2u}_{H_x^1L_z^2}}{(\abs\alpha+1)^4}\bigg)\bigg(\sum_{\alpha\in\mathbb Z_+^3}M_{r,\alpha}^2\norm{\comi zD^{\alpha}\partial_xu}_{H_x^1L_z^2}^2\bigg)^\frac{1}{2}\\
&\leq C\bigg(\sum_{\alpha\in\mathbb Z_+^3}M_{r,\alpha}^2\norm{\comi zD^{\alpha}\partial_z^2u}_{H_x^1L_z^2}^2\bigg)^\frac{1}{2}\abs{u}_{Z_r}\leq C\abs{u}_{Z_r}^2.
     \end{aligned}
 \end{equation}
 Therefore, the first term on the right-hand side of \eqref{S111} is bounded above by $C\abs{u}_{X_r}\abs{u}_{Z_r}^2$, and the last term admits the same bound. This yields the estimate for $S_{1,1}$ in \eqref{decom:S1}:
\begin{equation*}
S_{1,1}\leq C\abs{u}_{X_r}\abs{u}_{Z_r}^2.
\end{equation*}
A similar argument applied to $S_{1,2}$ in \eqref{decom:S1} gives
\begin{equation*}
    S_{1,2}\leq C\abs{u}_{X_r}\abs{u}_{Z_r}^2.
\end{equation*}
Substituting the two estimates into \eqref{decom:S1} yields the desired estimate in Lemma \ref{j1++}. This completes the proof.
\end{proof}

\begin{lemma}[Estimate on $S_2$]\label{lem:J1}
    Let $S_2$ be given in \eqref{J1-5}, namely,
\begin{equation*}
    S_2=-\sum_{\substack{\alpha\in\mathbb Z_+^3\\ \alpha_3\ge 1}}M_{r,\alpha}^2\inner{D^{\alpha}(u\partial_xu+w\partial_zu),\ D^{\alpha} u}_{\mathcal{H}^1}.
\end{equation*}  
It holds that
    \begin{equation}\label{est:J1}
S_2\leq C\abs{u}_{{X}_r}\abs{u}_{{Z}_r}^2,      
    \end{equation}
where the norms $\abs{\cdot}_{{X}_r}$ and $\abs{\cdot}_{{Z}_r}$ are defined as in   \eqref{dxz}.    
\end{lemma}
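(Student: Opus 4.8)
**The plan is to estimate $S_2$ by the same convolution machinery already used for $S_1$ in Lemma \ref{j1++}, the only genuinely new feature being the presence of at least one $\partial_z$ inside $D^\alpha$ (since $\alpha_3 \ge 1$).** The natural first move is to invoke the factorization $D^\alpha = t^{1/2} D^{\alpha_*}\partial_z$ from \eqref{facttwo} to peel off one normal derivative. Writing $D^\alpha = t^{1/2} D^{\alpha_*}\partial_z$ turns the nonlinear term $D^\alpha(u\partial_x u + w\partial_z u)$ into $t^{1/2} D^{\alpha_*}\partial_z(u\partial_x u + w\partial_z u)$, and I would then apply Leibniz's formula to $D^{\alpha_*}$ exactly as in the $S_1$ case. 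The key structural point is that $\partial_z w = -\partial_x u$ by the divergence-free condition, so the problematic factor $\partial_z(w\partial_z u) = (\partial_z w)\partial_z u + w\partial_z^2 u = -(\partial_x u)\partial_z u + w\partial_z^2 u$ stays controllable: the first piece is quadratic in first-order tangential/normal derivatives, and the second piece has a $\partial_z^2 u$ that pairs naturally with the $\norm{\comi z D^\alpha \partial_z^2 u}$ appearing in $\abs{\cdot}_{Z_r}$.

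Concretely, after Leibniz expansion I expect to produce a sum over $\beta \le \alpha_*$ of products of the form $\norm{\comi z D^{\beta}\partial_z^2 u}_{H_x^1 L_z^2}$, $\norm{\comi z D^{\alpha_* - \beta}\partial_x u}_{H_x^1 L_z^2}$, and a remaining factor that I bound by $\norm{D^\alpha u}_{\mathcal H^1}$ after integrating by parts in $z$ and using Hardy's inequality, precisely mirroring the two displayed Sobolev estimates in the proof of Lemma \ref{j1++}. The combinatorial weight manipulation is then identical: I would use \eqref{factor} together with the bound
\begin{equation*}
\binom{\alpha_*}{\beta}\frac{M_{r,\alpha}}{M_{r,\beta}M_{r,\alpha_*-\beta}} \leq \frac{C}{(\abs\beta+1)^4}+\frac{C}{(\abs{\alpha_*}-\abs\beta+1)^4},
\end{equation*}
which follows from \eqref{sdf} and the fact that $\abs{\alpha} = \abs{\alpha_*}+1$ so that $M_{r,\alpha}$ and $M_{r,\alpha_*}$ differ only by the harmless factor controlled in \eqref{sdf}. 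Feeding this into Young's inequality for discrete convolution \eqref{young}, exactly as in \eqref{applyyoung}, collapses the double sum into a product of $\abs{\cdot}_{X_r}$ and $\abs{\cdot}_{Z_r}$ factors, yielding $S_2 \leq C\abs{u}_{X_r}\abs{u}_{Z_r}^2$.

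The main obstacle I anticipate is \textbf{bookkeeping the extra $\partial_z$ correctly so that every resulting term still matches one of the two building-block norms in $\abs{\cdot}_{Z_r}$}, rather than producing an uncontrolled $\partial_z^3 u$ or an unweighted term. In particular, when the stray $\partial_z$ lands on $\partial_z u$ in the term $w\partial_z u$, one obtains $w\partial_z^2 u$, whose $\comi z$-weighted norm must be absorbed into $\norm{\comi z D^{\alpha}\partial_z^2 u}_{H_x^1 L_z^2}$; this forces a careful use of integration by parts to transfer a derivative off the highest-order factor, together with Hardy's inequality to handle the weight $\comi z^{-1}$ generated when estimating $w = -\int_0^z \partial_x u$ in $L_z^\infty$. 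Once this derivative-distribution is organized so that the highest normal derivative always appears as $\comi z \partial_z^2(\cdot)$ and the tangential loss is always borne by $\comi z\partial_x(\cdot)$, the estimate reduces to the already-established $S_1$ argument and closes without new ideas. I therefore expect the substance of the lemma to lie entirely in this derivative-counting step, with the convolution and factorial estimates being routine repetitions of Lemma \ref{j1++}.
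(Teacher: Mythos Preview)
Your overall machinery---the factorization \eqref{facttwo}, Leibniz expansion, the combinatorial weight bounds, and Young's convolution inequality \eqref{young}---matches the paper's, but one organizational choice creates a gap you do not fully address. The paper applies $D^\alpha=t^{1/2}D^{\alpha_*}\partial_z$ to the \emph{second} factor $D^\alpha u$ in the inner product, not to the nonlinear term: after Cauchy--Schwarz in $\mathcal H^1$ this yields $M_{r,\alpha}\norm{D^\alpha u}_{\mathcal H^1}\le Cr|\alpha|^{-1}t^{1/2}M_{r,\alpha_*}\norm{D^{\alpha_*}\partial_z u}_{\mathcal H^1}$, and a further Cauchy--Schwarz in $\alpha$ extracts a clean factor $|\partial_z u|_{X_r}\le C|u|_{Z_r}$. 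Everything then reduces to the self-contained claim \eqref{claimone}, which bounds $t^{1/2}r|\alpha|^{-1}M_{r,\alpha}\norm{D^\alpha(u\partial_xu+w\partial_zu)}_{\mathcal H^1}$ and is proved via the $\mathcal H^1$-algebra property together with index-shifting identities such as $\norm{D^{\alpha-\beta}\partial_xu}_{\mathcal H^1}=t^{-1/2}\norm{D^{\alpha-\beta+(0,1,-1)}\partial_zu}_{\mathcal H^1}$ (when $\beta_3=0$); these shifts are precisely what absorb the stray $t^{1/2}$, and they force the case-split $\beta_3=0$ versus $\beta_3\ge1$ and the inequalities \eqref{ineq3}--\eqref{ineq6}. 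A bonus of this organization is that \eqref{claimone} is reused verbatim in the proof of Lemma~\ref{lem:J3}.

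Your route---pushing $\partial_z$ into the nonlinearity first---produces $w\partial_z^2u$, and when the $D^{\alpha_*}$-Leibniz expansion leaves no $z$-derivative on the $w$-factor (i.e., $(\alpha_*-\beta)_3=0$), you would estimate $\norm{D^{\alpha_*-\beta}w}_{L_z^\infty}\le C\norm{\comi z D^{\alpha_*-\beta}\partial_xu}_{L_z^2}$ and pair it with $\norm{\comi z D^\beta\partial_z^2u}_{L_z^2}$, obtaining two $Z_r$-type factors but with an unabsorbed prefactor $t^{1/2}$. This is a mismatch in the power of $t$, not a misplaced derivative, so the integration-by-parts you propose does not remove it. The fix is again an index shift---for instance $t^{1/2}\norm{D^\beta\partial_z^2u}_{L_z^2}=\norm{D^{\beta+(0,0,1)}\partial_zu}_{L_z^2}\le C\norm{\comi z D^{\beta+(0,0,1)}\partial_z^2u}_{L_z^2}$ via Hardy's inequality---after which the combinatorial weight $\binom{\alpha_*}{\beta}M_{r,\alpha}/(M_{r,\alpha_*-\beta}M_{r,\beta+(0,0,1)})$ is indeed summable and the estimate closes. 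But this shifting step, together with the attendant case analysis, is exactly the substance of the paper's claim \eqref{claimone}; your sketch does not include it. So your approach is salvageable, but as written it has a gap, and filling it essentially reproduces the paper's argument.
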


\begin{proof}
Fix $\alpha=(\alpha_1,\alpha_2,\alpha_3)\in\mathbb Z_+^3$ with $\alpha_3\ge 1$, and recall   $\alpha_*=\alpha-(0,0,1).$  Using \eqref{sdf} and \eqref{facttwo},
we estimate 
$S_2$	
  as follows:
\begin{align*}
    S_2\leq &\sum_{\substack{\alpha\in\mathbb Z_+^3\\ \alpha_3\ge 1}}M_{r,\alpha}^2\norm{D^{\alpha}(u\partial_xu+w\partial_zu)}_{\mathcal{H}^1}\norm{D^{\alpha} u}_{\mathcal{H}^1}\\
    \leq&C\sum_{\substack{\alpha\in\mathbb Z_+^3\\ \alpha_3\ge 1}}\big(t^{\frac{1}{2}}r\abs{\alpha}^{-1}M_{r,\alpha}\norm{D^{\alpha}(u\partial_xu+w\partial_zu)}_{\mathcal{H}^1}\big)\big(M_{r,\alpha_*}\norm{D^{\alpha_*}\partial_z u}_{\mathcal{H}^1}\big)\\
    \leq &C\bigg[\sum_{\substack{\alpha\in\mathbb Z_+^3\\ \alpha_3\ge 1}}\Big(t^{\frac12}r\abs{\alpha}^{-1}M_{r,\alpha}\norm{D^{\alpha}(u\partial_xu+w\partial_zu)}_{\mathcal{H}^1}\Big)^2\bigg]^\frac{1}{2}\abs{\partial_zu}_{X_{r}}.
\end{align*}
For the   last factor, using estimate \eqref{xz} yields 
\begin{equation}\label{df}
	\abs{\partial_zu}_{X_{r}}\leq C\abs{u}_{Z_{r}}.
\end{equation}
Thus, assertion \eqref{est:J1} follows once we establish the inequality
\begin{equation}\label{claimone}
   \bigg[\sum_{\substack{\alpha\in\mathbb Z_+^3\\ \alpha_3\ge 1}}\Big(t^{\frac12}r\abs{\alpha}^{-1}M_{r,\alpha}\norm{D^{\alpha}(u\partial_xu+w\partial_zu)}_{\mathcal{H}^1}\Big)^2\bigg]^\frac{1}{2}\leq C\abs{u}_{X_{r}}\abs{u}_{Z_{r}}.
\end{equation}
 We now proceed to prove \eqref{claimone}  through two steps. 

\textit{Step 1}. In this step we will prove that
\begin{equation}\label{uxu}
   \bigg[\sum_{\substack{\alpha\in\mathbb Z_+^3\\ \alpha_3\ge 1}}\Big(t^{\frac12}r\abs{\alpha}^{-1}M_{r,\alpha}\norm{D^{\alpha}(u\partial_xu)}_{\mathcal{H}^1}\Big)^2\bigg]^\frac{1}{2}\leq C\abs{u}_{X_{r}}\abs{u}_{Z_{r}}.
\end{equation}
Since  $\mathcal{H}^1(\mathbb R_+^2)$  is an algebra under pointwise multiplication,  using  Leibniz's formula and the fact $0<r<1$ yields 
\begin{multline}\label{ef}
 \bigg[\sum_{\substack{\alpha\in\mathbb Z_+^3\\ \alpha_3\ge 1}}tr^2\abs{\alpha}^{-2}M_{r,\alpha}^2\norm{D^{\alpha}(u\partial_xu)}_{\mathcal{H}^1}^2\bigg]^\frac{1}{2}	\\
\leq \bigg[\sum_{\substack{\alpha\in\mathbb Z_+^3\\ \alpha_3\ge 1}}\bigg(\sum_{\beta\leq\alpha}\binom{\alpha}{\beta}t^{\frac12}\abs{\alpha}^{-1}M_{r,\alpha}\norm{D^\beta u}_{\mathcal{H}^1}\norm{D^{\alpha-\beta}\partial_xu}_{\mathcal{H}^1}\bigg)^2 \bigg]^\frac{1}{2}.
\end{multline}
For any given multi-index $\beta=(\beta_1,\beta_2,\beta_3)\in\mathbb Z_+^3$ with $\beta\leq \alpha$, if $\beta_3=0$,  the condition $\alpha_3\geq 1$ enables us to write
\begin{equation*}
	 \norm{D^{\alpha-\beta}\partial_xu}_{\mathcal{H}^1}=t^{-\frac12} \norm{D^{\alpha-\beta+(0,1,-1)}\partial_zu}_{\mathcal{H}^1},
\end{equation*}
where we used the definition \eqref{def:Gamma} of $D^{\alpha}$.  If $\beta_3\geq 1$, we have, recalling $\beta_*=\beta-(0,0,1)$,
\begin{equation*}
	\norm{D^\beta u}_{\mathcal{H}^1}=t^{\frac12}\norm{D^{\beta_*} \partial_z u}_{\mathcal{H}^1}\  \textrm{ and }\  \norm{D^{\alpha-\beta}\partial_xu}_{\mathcal{H}^1}= t^{-1} \norm{D^{\alpha-\beta+(0,1,0)}u}_{\mathcal{H}^1}.
\end{equation*}
Combining these estimates, we obtain 
\begin{equation}\label{tr}
 \begin{aligned}
&\sum_{\beta\leq\alpha}\binom{\alpha}{\beta}t^{\frac12}\abs{\alpha}^{-1}M_{r,\alpha}\norm{D^\beta u}_{\mathcal{H}^1}\norm{D^{\alpha-\beta}\partial_xu}_{\mathcal{H}^1}\\
&= \sum_{\substack{\beta\leq\alpha\\ \beta_3=0}}
 \binom{\alpha}{\beta} \abs{\alpha}^{-1}M_{r,\alpha}\norm{D^\beta u}_{\mathcal{H}^1}\norm{D^{\alpha-\beta+(0,1,-1)}\partial_zu}_{\mathcal{H}^1}\\
&\quad+ \sum_{\substack{\beta\leq\alpha\\ \beta_3\geq 1}}\binom{\alpha}{\beta} \abs{\alpha}^{-1} M_{r,\alpha}  \norm{D^{\beta_*}\partial_z u}_{\mathcal{H}^1} \norm{D^{\alpha-\beta+(0,1,0)}u}_{\mathcal{H}^1}.
 \end{aligned}\
 \end{equation}   
On the other hand, a direct computation (see Appendix \ref{sec:ineq} for details) shows that
 \begin{equation}\label{ineq3}
 	\begin{aligned}
 	\binom{\alpha}{\beta}  \frac{\abs{\alpha}^{-1}M_{r,\alpha} }{M_{r, \beta}M_{r, \alpha-\beta+(0,1,-1)}}\leq \frac{C}{(\abs{\beta}+1)^4}+\frac{C}{(\abs{\alpha}-\abs{\beta}+1)^4}\quad \mathrm{if}\quad \beta_3=0,
 	\end{aligned}
 \end{equation} 
 and 
 \begin{equation}\label{ineq4}
 	\begin{aligned}
 	\binom{\alpha}{\beta}  \frac{\abs{\alpha}^{-1}M_{r,\alpha} }{M_{r,\beta_*}M_{r,\alpha-\beta+(0,1,0)}}\leq \frac{C}{\abs{\beta}^4}+\frac{C}{(\abs{\alpha}-\abs{\beta}+2)^4} \quad \mathrm{if}\quad \beta_3\ge 1.
 	\end{aligned}
 \end{equation} 
Combining these inequalities  with \eqref{tr},  we repeat the argument in \eqref{S111} and \eqref{applyyoung} to conclude that 
\begin{multline}\label{ew}
\bigg[\sum_{\substack{\alpha\in\mathbb Z_+^3\\ \alpha_3\ge 1}}\bigg(\sum_{\beta\leq\alpha}\binom{\alpha}{\beta}t^{\frac12}\abs{\alpha}^{-1}M_{r,\alpha}\norm{D^\beta u}_{\mathcal{H}^1}\norm{D^{\alpha-\beta}\partial_xu}_{\mathcal{H}^1}\bigg)^2 \bigg]^\frac{1}{2}\\
\leq C\abs{u}_{X_{r}}\abs{\partial_zu}_{X_{r}} \leq C\abs{u}_{X_{r}}\abs{u}_{Z_{r}},   
\end{multline}
the last inequality following from \eqref{df}. Combining this with \eqref{ef}
gives the desired estimate \eqref{uxu}.

\textit{Step 2}. This step is devoted to proving the estimate  
\begin{equation}\label{vyu}
    \bigg[\sum_{\substack{\alpha\in\mathbb Z_+^3\\ \alpha_3\ge 1}}\Big(t^{\frac12}r\abs{\alpha}^{-1}M_{r,\alpha}\norm{D^{\alpha}(w\partial_zu)}_{\mathcal{H}^1}\Big)^2\bigg]^\frac{1}{2}\leq C\abs{u}_{X_{r}}\abs{u}_{Z_{r}}.
\end{equation}
Let $\beta=(\beta_1,\beta_2,\beta_3)\in\mathbb Z_+^3$ be any given multi-index satisfying $\beta\leq\alpha$. 
If $\beta_3=0$, applying  the Sobolev inequality yields
\begin{multline*}
 \norm{(D^\beta w)D^{\alpha-\beta}\partial_zu}_{\mathcal{H}^1}\leq C \norm{\comi zD^\beta\partial_x u}_{H_x^1L_z^2}\norm{D^{\alpha-\beta}\partial_zu}_{\mathcal{H}^1}\\
 \leq  C t^{-\frac12}\norm{\comi zD^\beta\partial_x u}_{H_x^1L_z^2}\norm{D^{\alpha-\beta+(0,0,1)} u}_{\mathcal{H}^1}.
    \end{multline*}
If $\beta_3\ge 1$, it follows from the fact $\partial_xu+\partial_zw=0$ that  
\begin{equation*}
D^\beta w=-t^{-\frac{1}{2}}D^{\beta+(0,1,-1)}u.
\end{equation*} 
Thus, we combine the estimates above to obtain
 \begin{align*}
&t^\frac{1}{2}r\abs{\alpha}^{-1}M_{r,\alpha}\norm{D^{\alpha}(w\partial_zu)}_{\mathcal{H}^1}\\
&\leq C \sum_{\substack{\beta\leq\alpha\\ \beta_3=0}}\binom{\alpha}{\beta} r \abs{\alpha}^{-1} M_{r,\alpha} \norm{\comi zD^\beta \partial_xu}_{H_x^1L_z^2}  \norm{D^{\alpha-\beta+(0,0,1)}u}_{\mathcal{H}^1} \\
&\quad+C \sum_{\substack{\beta\leq\alpha\\
\beta_3\ge1}}\binom{\alpha}{\beta} \abs{\alpha}^{-1} M_{r,\alpha} \norm{D^{\beta+(0,1,-1)} u}_{\mathcal{H}^1} \norm{D^{\alpha-\beta}\partial_zu}_{\mathcal{H}^1}.
 \end{align*} 
Moreover, a direct computation (see Appendix \ref{sec:ineq} for details) shows that
 \begin{equation}\label{ineq5}
 	\binom{\alpha}{\beta}\frac{r \abs{\alpha}^{-1}M_{r,\alpha}}{M_{r,\beta}M_{r,\alpha-\beta+(0,0,1)}} \leq  \frac{C}{(\abs{\beta}+1)^4}+\frac{C}{(\abs{\alpha}-\abs{\beta}+2)^4}\quad \mathrm{if}\quad \beta_3=0,
 \end{equation}
 and
 \begin{equation}
 	\label{ineq6}
 	\binom{\alpha}{\beta}\frac{\abs{\alpha}^{-1}M_{r,\alpha}}{M_{r, \beta+(0,1,-1)}M_{r,\alpha-\beta}} \leq \frac{C}{(\abs{\beta}+1)^4}+\frac{C}{(\abs{\alpha}-\abs{\beta}+1)^4} \quad \mathrm{if}\quad \beta_3\ge 1.
 \end{equation}
Hence, similar to \eqref{ew}, repeating the argument  in \eqref{S111} and \eqref{applyyoung}, we obtain 
\begin{multline*}
   \bigg[\sum_{\substack{\alpha\in\mathbb Z_+^3\\ \alpha_3\ge 1}}\Big(t^{\frac12}r\abs{\alpha}^{-1}M_{r,\alpha}\norm{D^{\alpha}(w\partial_zu)}_{\mathcal{H}^1}\Big)^2\bigg]^\frac{1}{2}\\
    \leq C\abs{u}_{X_{r}}\abs{u}_{Z_{r}}+C\abs{u}_{X_{r}}\abs{\partial_zu}_{X_{r}}\leq C\abs{u}_{X_{r}}\abs{u}_{Z_{r}}.
\end{multline*}
  Then estimate \eqref{vyu} follows. Combining \eqref{uxu} and \eqref{vyu} yields assertion \eqref{claimone} and thus completes the proof of Lemma \ref{lem:J1}.
\end{proof}

\begin{lemma}[Estimate on $S_3$]\label{lem:J3}
    Let $S_3$ be given in \eqref{J1-5}, namely,  
\begin{equation*}
 S_3=-\sum_{0\leq k\leq 1}\sum_{\alpha\in\mathbb Z_+^3}M_{r,\alpha}^2\big(D^{\alpha}\partial_z^{k+1}u|_{z=0},\ D^{\alpha} \partial_z^ku|_{z=0}\big)_{H_x^1}.   
\end{equation*}    
    It holds that
    \begin{equation}\label{est:J2}
S_3\leq Cr^\frac{1}{2}\abs{u}_{{Z}_r}^2+C\abs{u}_{{X}_r}^\frac{1}{2}\abs{u}_{{Z}_r}^2,      
    \end{equation}
where the norms $\abs{\cdot}_{{X}_r}$ and $\abs{\cdot}_{{Z}_r}$ are defined as in \eqref{dxz}.    
\end{lemma}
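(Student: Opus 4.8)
The plan is to treat the two boundary contributions $k=0$ and $k=1$ in $S_3$ separately, first reducing each inner sum to multi-indices with $\alpha_3\ge 1$. When $\alpha_3=0$ the operator $D^\alpha$ carries no $z$-derivative, so $D^\alpha u|_{z=0}=0$ follows from the boundary condition $u|_{z=0}=0$ and kills the $k=0$ summand; for the $k=1$ summand one uses the velocity equation in \eqref{eq:2D} evaluated at $z=0$, where $u=w=f=0$ and hence $\partial_t u|_{z=0}=\partial_x f|_{z=0}=0$, to deduce $\partial_z^2u|_{z=0}=0$ and therefore $D^\alpha\partial_z^2u|_{z=0}=0$. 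Thus only $\alpha_3\ge 1$ survives, which is exactly the regime in which the weight trade \eqref{sdf}, $\abs\alpha M_{r,\alpha}\le CrM_{r,\alpha_*}$, can be invoked to generate a gain in $r$.

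For the surviving indices I would estimate each boundary inner product through the anisotropic trace inequality: for $\phi(x,z)$ decaying as $z\to\infty$ one has, at fixed $x$, $\phi(x,0)^2\le 2\norm{\phi}_{L_z^2}\norm{\partial_z\phi}_{L_z^2}$, and after including one tangential derivative this gives $\norm{D^\alpha\partial_z^k u|_{z=0}}_{H_x^1}^2\le C\norm{D^\alpha\partial_z^k u}_{H_x^1L_z^2}\norm{D^\alpha\partial_z^{k+1}u}_{H_x^1L_z^2}$, with the weight $\comi z$ inserted freely since $\comi 0=1$. The decisive bookkeeping is the classification of the resulting volume norms: $\norm{D^\alpha u}_{H_x^1L_z^2}$ and $\norm{\comi zD^\alpha\partial_z u}_{H_x^1L_z^2}$ are \emph{energy} quantities controlled by $\abs{u}_{X_r}$, whereas $\norm{D^\alpha\partial_z u}_{H_x^1L_z^2}$ (via \eqref{xz}), $\norm{\comi zD^\alpha\partial_z^2u}_{H_x^1L_z^2}$ and $\norm{\comi zD^\alpha\partial_xu}_{H_x^1L_z^2}$ are \emph{dissipation} quantities controlled by $\abs{u}_{Z_r}$; the last of these is available only because $\partial_xu=-\partial_z^2f$ renders $\partial_xu$ a dissipation-level object.

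With this dictionary each $S_3$-summand factorizes into energy and dissipation traces. I would then multiply by $M_{r,\alpha}^2$, trade one factor $M_{r,\alpha}$ for $rM_{r,\alpha_*}$ using \eqref{sdf} (legitimate since $\alpha_3\ge1$), and sum the resulting convolutions by the discrete Young inequality \eqref{young}, exactly as in \eqref{S111}--\eqref{applyyoung}. The half-powers $r^{1/2}$ and $\abs{u}_{X_r}^{1/2}$ in the target are meant to emerge from distributing $M_{r,\alpha}^2$ across the trace factors and applying H\"older with exponents adapted to the square-root structure of the trace inequality, so that two full copies of $\abs{u}_{Z_r}$ remain alongside either $r^{1/2}$ (from the weight trade) or $\abs{u}_{X_r}^{1/2}$ (from the single energy trace).

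The main obstacle is twofold, and both parts concern the $z$-regularity budget. First, the $k=1$ factor $D^\alpha\partial_z^2u|_{z=0}$ cannot be traced directly, since the trace inequality would demand $\norm{D^\alpha\partial_z^3u}_{H_x^1L_z^2}$, a third $z$-derivative that $\abs{u}_{Z_r}$ does not control; I plan to circumvent this by substituting $\partial_z^2u=\partial_tu+u\partial_xu+w\partial_zu-\partial_xf$ into this factor before tracing, so that the top $z$-derivative is replaced by $\partial_t$, by quadratic transport terms contributing only higher-order quantities of the form $\abs{u}_{X_r}\abs{u}_{Z_r}^2$, and by $\partial_xf$, which is again dissipation-level through $\partial_z^2f=-\partial_xu$. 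Second, the delicate point is that a naive splitting of the $k=0$ term produces the weaker bound $\abs{u}_{X_r}^{1/2}\abs{u}_{Z_r}^{3/2}$, whose fractional power of $\abs{u}_{Z_r}$ is not integrable enough to close the global estimate; upgrading it to the stated $(\text{small})\,\abs{u}_{Z_r}^2$ structure, while keeping the $t$-powers hidden in $D^\alpha$ consistent through the weight trade and the equation substitution, is where the argument must be carried out with the greatest care.
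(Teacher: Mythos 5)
You have the right skeleton --- the boundary reduction via $u|_{z=0}=\partial_z^2u|_{z=0}=0$, the trace inequality $\phi(x,0)^2\le 2\norm{\phi}_{L_z^2}\norm{\partial_z\phi}_{L_z^2}$, the weight trade \eqref{sdf}, and the substitution of the equation to avoid the uncontrolled $\partial_z^3u$ --- but the point you flag at the end as requiring ``the greatest care'' is precisely the step you have not supplied, and it is a genuine gap: your energy/dissipation dictionary cannot be patched into \eqref{est:J2}. If one trace factor (say $\norm{D^\alpha u}_{H_x^1L_z^2}^{1/2}$) is kept as an $X_r$-type quantity, the $k=0$ contribution is $C\abs{u}_{X_r}^{1/2}\abs{u}_{Z_r}^{3/2}$, and this bound is \emph{strictly weaker} than the target: for fixed $\abs{u}_{X_r}$ and $\abs{u}_{Z_r}\to0$ one has $\abs{u}_{X_r}^{1/2}\abs{u}_{Z_r}^{3/2}\gg\big(r^{1/2}+\abs{u}_{X_r}^{1/2}\big)\abs{u}_{Z_r}^{2}$, so no redistribution of $M_{r,\alpha}^2$ or choice of H\"older exponents across your three fixed factors can upgrade it; and, as you note, it cannot close the bootstrap, since Young's inequality turns it into $\epsilon\abs{u}_{Z_r}^2+C_\epsilon\abs{u}_{X_r}^2$ with the latter not time-integrable. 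The alternative of converting \emph{all} three factors to dissipation by pure bookkeeping also fails: writing $\norm{D^\alpha u}^{1/2}=t^{1/4}\norm{D^{\alpha_*}\partial_zu}^{1/2}$ gains $(r/\abs\alpha)^{1/2}$ by \eqref{sdf} but leaves a $t^{1/4}$, and reabsorbing that $t^{1/4}$ into $\norm{D^\alpha\partial_z^2u}^{1/2}$ shifts its index up by one, which by the reverse comparison of the weights costs $(\abs\alpha/r)^{1/2}$ and cancels the gain; one ends with $C\abs{u}_{Z_r}^2$ with a non-small constant, useless for Proposition \ref{prop:y}.

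The missing idea is the paper's: no trace factor is ever classified as energy, and the equation is substituted in the $k=0$ term as well, not only in the $k=1$ term. Concretely, after reducing the $k=0$ sum to $\alpha_3\ge3$ and the $k=1$ sum to $\alpha_3\ge2$ (your reduction to $\alpha_3\ge1$ is too coarse; the finer vanishing is needed so that index shifts such as $\alpha+(0,2,-3)$ remain in $\mathbb Z_+^3$), one converts $\norm{D^\alpha u}^{1/2}$ into the dissipation factor $Ct^{1/4}\norm{\comi zD^{\alpha_*}\partial_z^2u}_{H_x^1L_z^2}^{1/2}$ via \eqref{facttwo} and Hardy \eqref{hi}, bounds $\norm{D^\alpha\partial_zu}$ by $C\norm{\comi zD^\alpha\partial_z^2u}_{H_x^1L_z^2}$, and groups the leftover $t^{1/4}$, the gained $(r/\abs\alpha)^{1/2}$, and the top factor into $\big(t^{1/2}r\abs\alpha^{-1}M_{r,\alpha}\norm{D^\alpha\partial_z^2u}_{H_x^1L_z^2}\big)^{1/2}$. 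H\"older then yields $S_3\le C(R_1+R_2)^{1/2}\abs{u}_{Z_r}^{3/2}$, where $R_2$ collects the quantities $t^{1/2}r\abs\alpha^{-1}M_{r,\alpha}\norm{D^\alpha\partial_z^2u}_{H_x^1L_z^2}$ in $\ell^2$ and $R_1$ the analogous sum with $\partial_z^3u$ from $k=1$. The decisive step is substituting $\partial_z^2u=\partial_tu-\partial_xf+u\partial_xu+w\partial_zu$ \emph{inside $R_1$ and $R_2$}: the linear pieces become genuine $Z_r$-quantities after index shifts (using $\partial_z^2f=-\partial_xu$) and retain the factor $r$ from the prefactor, while the nonlinear pieces are bounded by $C\abs{u}_{X_r}\abs{u}_{Z_r}$ via \eqref{claimone}. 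Hence $R_1+R_2\le Cr\abs{u}_{Z_r}+C\abs{u}_{X_r}\abs{u}_{Z_r}$, and both small prefactors in \eqref{est:J2} --- $r^{1/2}$ from the linear part and $\abs{u}_{X_r}^{1/2}$ from the nonlinear part --- are generated by this substitution, not by a ``single energy trace'' as your plan envisions.
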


\begin{proof}
Recall that $\alpha_* = \alpha - (0,0,1)$ for $\alpha \in \mathbb {Z}_+^3$ with $\alpha_3 \ge 1$.
Observing $ \partial_z^2 u|_{z=0} = 0$, we use the Sobolev inequality to obtain 
\begin{align*}
&-\sum_{ \alpha\in\mathbb Z_+^3 }M_{r,\alpha}^2(D^{\alpha}\partial_z^{2}u|_{z=0},  D^{\alpha} \partial_zu|_{z=0})_{H_x^1}=-\sum_{\substack{\alpha\in\mathbb Z_+^3\\ \alpha_3\ge 2}}M_{r,\alpha}^2(D^{\alpha}\partial_z^{2}u|_{z=0},  D^{\alpha} \partial_zu|_{z=0})_{H_x^1}\\
&\leq C\sum_{\substack{\alpha\in\mathbb Z_+^3\\ \alpha_3\ge 2}}M_{r,\alpha}^2\norm{D^{\alpha}\partial_z^{3}u}_{H_x^1L_z^2}^\frac{1}{2}\norm{D^{\alpha}\partial_z^{2}u}_{H_x^1L_z^2}\norm{D^{\alpha}\partial_zu}_{H_x^1L_z^2}^\frac{1}{2}\\
    &\leq C \sum_{\substack{\alpha\in\mathbb Z_+^3\\ \alpha_3\ge 2}}\big(t^{\frac{1}{4}}r^\frac{1}{2}\abs{\alpha}^{-\frac{1}{2}}M_{r,\alpha}^\frac{1}{2}\norm{D^{\alpha}\partial_z^{3}u}_{H_x^1L_z^2}^\frac{1}{2}\big)\big(M_{r,\alpha}\norm{\comi zD^{\alpha}\partial_z^{2}u}_{H_x^1L_z^2}\big)\\
    &\qquad\times\big(M_{r,\alpha_*}^\frac{1}{2}\norm{\comi zD^{\alpha_*}\partial_z^{2}u}_{H_x^1L_z^2}^\frac{1}{2}\big)\\
 &\leq    C\bigg[\sum_{\substack{\alpha\in\mathbb Z_+^3\\ \alpha_3\ge 2}}\Big(t^{\frac12}r\abs{\alpha}^{-1}M_{r,\alpha}\norm{D^{\alpha}\partial_z^3u}_{H_x^1L_z^2}\Big)^2\bigg]^\frac{1}{4}\abs{u}_{{Z}_r}^\frac{3}{2},
\end{align*}
where the second inequality uses \eqref{sdf} and \eqref{facttwo},  and the last one follows from 
 the definition of $\abs{\cdot}_{{Z}_r}$. On the other hand,
using an analogous argument and the boundary condition  $u|_{z=0} = \partial_z^2 u|_{z=0} = 0$, we have
\begin{align*}
&-\sum_{\alpha\in\mathbb Z_+^3}M_{r,\alpha}^2\inner{D^{\alpha}\partial_zu|_{z=0},\ D^{\alpha} u|_{z=0}}_{H_x^1}=-\sum_{\substack{\alpha\in\mathbb Z_+^3\\ \alpha_3\ge 3}}M_{r,\alpha}^2\inner{D^{\alpha}\partial_zu|_{z=0},\ D^{\alpha} u|_{z=0}}_{H_x^1}\\
&\leq C\sum_{\substack{\alpha\in\mathbb Z_+^3\\ \alpha_3\ge 3}}M_{r,\alpha}^2\norm{D^{\alpha}\partial_z^2u}_{H_x^1L_z^2}^\frac{1}{2}\norm{D^{\alpha}\partial_zu}_{H_x^1L_z^2}\norm{D^{\alpha} u}_{H_x^1L_z^2}^\frac{1}{2}\\
&\leq C\sum_{\substack{\alpha\in\mathbb Z_+^3\\ \alpha_3\ge 3}} t^{\frac14}r^{\frac12}\abs\alpha^{-\frac12}M_{r,\alpha}^{\frac32}\norm{D^{\alpha}\partial_z^2u}_{H_x^1L_z^2}^\frac{1}{2}\norm{D^{\alpha}\partial_zu}_{H_x^1L_z^2}\big( M_{r,\alpha_*}\norm{D^{\alpha_*}\partial_z u}_{H_x^1L_z^2}\big)^\frac{1}{2}\\
&\leq C\bigg[\sum_{\substack{\alpha\in\mathbb Z_+^3\\ \alpha_3\ge 3}}\Big(t^\frac12r\abs{\alpha}^{-1}M_{r,\alpha}\norm{D^{\alpha}\partial_z^2u}_{H_x^1L_z^2}\Big)^2\bigg]^\frac{1}{4}\abs{u}_{{Z}_r}^\frac{3}{2},
\end{align*}
where the second inequality uses \eqref{sdf} and \eqref{facttwo} again,  and  the last line follows from  the Hardy's inequality which yields
\begin{equation}\label{hi}
\forall\ \gamma\in\mathbb Z_+^3,\quad 	\norm{D^{\gamma}\partial_zu}_{H_x^1L_z^2}\leq C \norm{zD^{\gamma}\partial_z^2u}_{H_x^1L_z^2} \leq C \norm{\comi zD^{\gamma}\partial_z^2u}_{H_x^1L_z^2}.
\end{equation}
Then combining the estimates above yields
\begin{equation}\label{decom:J2}
    S_3\leq C(R_1+R_2)^\frac{1}{2}\abs{u}_{{Z}_r}^\frac{3}{2},
\end{equation}
where
\begin{equation*}
\left\{
\begin{aligned}
&R_1=\bigg[\sum_{\substack{\alpha\in\mathbb Z_+^3\\ \alpha_3\ge 2}}\Big(t^{\frac12}r\abs{\alpha}^{-1}M_{r,\alpha}\norm{D^{\alpha}\partial_z^3u}_{H_x^1L_z^2}\Big)^2\bigg]^\frac{1}{2},\\
&R_2=\bigg[\sum_{\substack{\alpha\in\mathbb Z_+^3\\ \alpha_3\ge 3}}\Big(t^\frac12r\abs{\alpha}^{-1}M_{r,\alpha}\norm{D^{\alpha}\partial_z^2u}_{H_x^1L_z^2}\Big)^2\bigg]^\frac{1}{2}.
\end{aligned}
\right.
\end{equation*}
We now estimate $R_1$ and $R_2$ through the following two steps.

\textit{Step 1 (Estimate of $R_1$)}. Using  the identity  $\partial_z^2u=\partial_tu-\partial_xf+u\partial_xu+w\partial_zu$, we split $R_1$ as follows:
\begin{equation}\label{decom:R1}
    \begin{aligned}
R_1\leq& \bigg[\sum_{\substack{\alpha\in\mathbb Z_+^3\\ \alpha_3\ge 2}}\Big(t^{\frac12}r\abs{\alpha}^{-1}M_{r,\alpha}\norm{D^{\alpha}\partial_z\partial_tu-D^{\alpha}\partial_z\partial_xf}_{H_x^1L_z^2}\Big)^2\bigg]^\frac{1}{2} \\
&+\bigg[\sum_{\substack{\alpha\in\mathbb Z_+^3\\ \alpha_3\ge 2}}\Big(t^{\frac12}r\abs{\alpha}^{-1}M_{r,\alpha}\norm{D^{\alpha}\partial_z(u\partial_xu+w\partial_zu)}_{H_x^1L_z^2}\Big)^2\bigg]^\frac{1}{2}\\
\stackrel{\rm def}{=}& R_{1,1}+R_{1,2}.
    \end{aligned}
\end{equation}
By the definition  \eqref{def:Gamma} of $D^\alpha$,  for any $\alpha\in \mathbb Z_+^3$ with $\alpha_3\ge 2$,
\begin{equation*}
    D^{\alpha}\partial_z\partial_tu=t^{-\frac{1}{2}}D^{\alpha+(1,0,-1)}\partial_z^2u
\end{equation*}
and
\begin{equation*}
    D^{\alpha}\partial_z\partial_xf=t^{-\frac{1}{2}}D^{\alpha+(0,1,-1)}\partial_z^2f=-t^{-\frac{1}{2}}D^{\alpha+(0,1,-1)}\partial_xu,
\end{equation*}
where the last  equality uses the fact that  $\partial_xu+\partial_z^2f=0$. Moreover, by \eqref{sdf},
\begin{equation*}
    M_{r,\alpha}=M_{r,\alpha+(1,0,-1)}=M_{r,\alpha+(0,1,-1)}.
\end{equation*}
Therefore, recalling the definition of $\abs{\cdot}_{{Z}_r}$ and observing $\abs\alpha^{-1}\leq 1$ for $\alpha_3\geq 1$,
we  combine  the above identities  to  deduce that
\begin{multline*}
 R_{1,1}\leq Cr\bigg(\sum_{\substack{\alpha\in\mathbb Z_+^3\\ \alpha_3\ge 2}}M_{r,\alpha+(1,0,-1)}^2\norm{D^{\alpha+(1,0,-1)}\partial_z^2u}_{H_x^1L_z^2}^2\bigg)^\frac{1}{2}\\
 +Cr\bigg(\sum_{\substack{\alpha\in\mathbb Z_+^3\\ \alpha_3\ge 2}}M_{r,\alpha+(0,1,-1)}^2\norm{D^{\alpha+(0,1,-1)}\partial_xu}_{H_x^1 L_z^2}^2\bigg)^\frac{1}{2}\leq Cr\abs{u}_{{Z}_r}.
\end{multline*}
On the other hand, using estimate \eqref{claimone} as well as the definition \eqref{def:weightnorm} of $\norm{\cdot}_{\mathcal H^1}$ gives
\begin{equation*}
R_{1,2}\leq C\bigg[\sum_{\substack{\alpha\in\mathbb Z_+^3\\ \alpha_3\ge 2}}\Big(t^{\frac12}r\abs{\alpha}^{-1}M_{r,\alpha}\norm{D^{\alpha}(u\partial_xu+w\partial_zu)}_{\mathcal H^1}\Big)^2\bigg]^\frac{1}{2}
\leq C\abs{u}_{{X}_r}\abs{u}_{{Z}_r}.
\end{equation*}
 Substituting the two estimates above into \eqref{decom:R1}  we obtain
\begin{equation}\label{est:R1}
    R_1\leq Cr\abs{u}_{{Z}_r}+C\abs{u}_{{X}_r}\abs{u}_{{Z}_r}.
\end{equation}

\textit{Step 2 (Estimate of  $R_2$)}. The treatment of 
$R_2$ is analogous to the previous one, with slight modifications. For any $\alpha\in\mathbb Z_+^3$ with $\alpha_3\ge3$, we use \eqref{def:Gamma} and $\partial_z^2u=\partial_tu-\partial_xf+u\partial_xu+w\partial_zu$ to write
\begin{multline*}
  D^{\alpha}\partial_z^2u= D^{\alpha}\partial_tu-D^{\alpha}\partial_xf+D^{\alpha}(u\partial_xu+w\partial_zu)\\
  =t^{-\frac{1}{2}}D^{\alpha+(1,0,-1)}\partial_zu+t^{-\frac{1}{2}}D^{\alpha+(0,2,-3)}\partial_zu+D^{\alpha}(u\partial_xu+w\partial_zu),
\end{multline*}
where the last line uses the fact that $ \partial_z^2f=-\partial_xu.$ On the other hand,  it follows from  \eqref{sdf}  that, 
  for any $\alpha\in\mathbb Z_+^3$ with $\alpha_3\ge3$,  
\begin{equation*}
    M_{r,\alpha}\leq Cr M_{r,\alpha+(0,2,-3)}\leq CM_{r,\alpha+(0,2,-3)},
\end{equation*}
the last inequality using  $0<r<1$. Then following  the argument in the previous step and using  estimate \eqref{claimone}, we obtain 
\begin{equation*}
    \begin{aligned}
  R_2&=\bigg[\sum_{\substack{\alpha\in\mathbb Z_+^3\\ \alpha_3\ge 3}}\Big(t^{\frac12}r\abs{\alpha}^{-1}M_{r,\alpha}\norm{D^{\alpha}\partial_z^2u}_{H_x^1L_z^2}\Big)^2\bigg]^\frac{1}{2}  \\
&\leq  Cr\bigg(\sum_{\substack{\alpha\in\mathbb Z_+^3\\ \alpha_3\ge 3}}M_{r,\alpha+(1,0,-1)}^2\norm{D^{\alpha+(1,0,-1)}\partial_zu}_{H_x^1L_z^2}^2\bigg)^\frac{1}{2}\\
 &\qquad +Cr\bigg(\sum_{\substack{\alpha\in\mathbb Z_+^3\\ \alpha_3\ge 3}}M_{r,\alpha+(0,2,-3)}^2\norm{D^{\alpha+(0,2,-3)}\partial_zu}_{H_x^1L_z^2}^2\bigg)^\frac{1}{2}\\
 &\qquad +C\bigg[\sum_{\substack{\alpha\in\mathbb Z_+^3\\ \alpha_3\ge 3}}\Big(t^{\frac12}r\abs{\alpha}^{-1}M_{r,\alpha}\norm{D^{\alpha}(u\partial_xu+w\partial_zu)}_{H_x^1L_z^2}\Big)^2\bigg]^\frac{1}{2}\\
&\leq Cr\abs{u}_{{Z}_r}+C\abs{u}_{{X}_r}\abs{u}_{{Z}_r},
    \end{aligned}
\end{equation*}
the last line using \eqref{hi} which follows from Hardy's inequality. Substituting the above estimate and  \eqref{est:R1} into \eqref{decom:J2} yields assertion \eqref{est:J2}.  This completes the proof of Lemma \ref{lem:J3}.
\end{proof}

\begin{lemma}[Estimate on $S_4$]\label{lem:J4}
    Let $S_4$ be given in \eqref{J1-5}, namely,  
\begin{equation*}
S_4=-\sum_{0\leq k\leq 1}\sum_{ \alpha\in\mathbb Z_+^3 }M_{r,\alpha}^2\big(D^{\alpha}\partial_z^{k+1}f|_{z=0},\ D^{\alpha} \partial_z^kf|_{z=0}\big)_{H_x^1}.  
\end{equation*}    
    It holds that
    \begin{equation}\label{est:J3}
S_4\leq Cr^\frac{1}{2}\abs{u}_{{Z}_r}^2+C\abs{u}_{{X}_r}^\frac{1}{2}\abs{u}_{{Z}_r}^2,      
    \end{equation}
where the norms $\abs{\cdot}_{{X}_r}$ and $\abs{\cdot}_{{Z}_r}$ are defined as in \eqref{dxz}.    
\end{lemma}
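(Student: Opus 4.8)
The plan is to run the proof of Lemma \ref{lem:J3} essentially verbatim, exploiting that the magnetic field $f$ obeys the \emph{same} homogeneous boundary conditions as $u$, and replacing the momentum equation used there by the linear constraint $\partial_z^2 f=-\partial_x u$. First I would record the boundary behaviour of $f$: from $u|_{z=0}=0$ and $\partial_z^2 f=-\partial_x u$ one gets $\partial_z^2 f|_{z=0}=-\partial_x u|_{z=0}=0$, so $f$ satisfies exactly $f|_{z=0}=\partial_z^2 f|_{z=0}=0$, the two conditions used for $u$. Reading the traces $D^{\alpha}\partial_z^{k+1}f|_{z=0}$ and $D^{\alpha}\partial_z^{k}f|_{z=0}$ as $z$-derivatives of $f$ of orders $\alpha_3+k+1$ and $\alpha_3+k$, the vanishing at orders $0$ and $2$ forces at least one factor of each inner product in $S_4$ to vanish unless $\alpha_3\ge 2$ (for $k=1$) and $\alpha_3\ge 3$ (for $k=0$), the very same ranges as in Lemma \ref{lem:J3}.

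After this reduction I would apply the same trace (Sobolev) inequality as in Lemma \ref{lem:J3}, i.e. $\norm{g|_{z=0}}_{H_x^1}\le C\norm{g}_{H_x^1L_z^2}^{1/2}\norm{\partial_z g}_{H_x^1L_z^2}^{1/2}$, to reach the analogue of \eqref{decom:J2},
$$
S_4\le C\big(R_1^{f}+R_2^{f}\big)^{1/2}\abs{u}_{Z_r}^{3/2},
$$
where $R_1^{f}$ and $R_2^{f}$ are the $f$-analogues of $R_1,R_2$, built from $t^{1/2}r\abs{\alpha}^{-1}M_{r,\alpha}\norm{D^{\alpha}\partial_z^3 f}_{H_x^1L_z^2}$ over $\alpha_3\ge 2$ and from $t^{1/2}r\abs{\alpha}^{-1}M_{r,\alpha}\norm{D^{\alpha}\partial_z^2 f}_{H_x^1L_z^2}$ over $\alpha_3\ge 3$, respectively. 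In each trace product the two remaining factors are converted into ingredients of $\abs{\cdot}_{Z_r}$ through the constraint and the discrete Young inequality \eqref{young}, exactly by the argument of \eqref{S111}--\eqref{applyyoung}.

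The only genuinely new point is the estimation of $R_1^{f}$ and $R_2^{f}$, where the substitution of the evolution equation in Lemma \ref{lem:J3} is replaced by the constraint $\partial_z^2 f=-\partial_x u$, hence $\partial_z^3 f=-\partial_x\partial_z u$. Combining these with the index shift behind \eqref{facttwo}, in the form $D^{\gamma}\partial_x=t^{-1/2}D^{\gamma+(0,1,-1)}\partial_z$ for $\gamma_3\ge 1$ (already used in \eqref{tr}), gives
$$
t^{1/2}\norm{D^{\alpha}\partial_z^3 f}_{H_x^1L_z^2}=\norm{D^{\alpha+(0,1,-1)}\partial_z^2 u}_{H_x^1L_z^2},\qquad
t^{1/2}\norm{D^{\alpha}\partial_z^2 f}_{H_x^1L_z^2}=\norm{D^{\alpha+(0,1,-1)}\partial_z u}_{H_x^1L_z^2},
$$
and the second is bounded by $C\norm{\comi z D^{\alpha+(0,1,-1)}\partial_z^2 u}_{H_x^1L_z^2}$ via Hardy's inequality \eqref{hi}. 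Since $\abs{\alpha+(0,1,-1)}=\abs{\alpha}$ forces $M_{r,\alpha}=M_{r,\alpha+(0,1,-1)}$ by \eqref{sdf}, and $\abs{\alpha}^{-1}\le 1$ on these ranges, both sums collapse into $\abs{u}_{Z_r}$, yielding $R_1^{f}+R_2^{f}\le Cr\abs{u}_{Z_r}$ and therefore
$$
S_4\le C\big(R_1^{f}+R_2^{f}\big)^{1/2}\abs{u}_{Z_r}^{3/2}\le Cr^{1/2}\abs{u}_{Z_r}^2,
$$
which is dominated by the right-hand side of \eqref{est:J3}.

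The hard part is pure scaling bookkeeping rather than any new inequality: because $f$ must be differentiated twice in $z$ before the constraint can be invoked, rewriting the lower-order $z$-traces of $f$ (in particular the factor $\norm{D^{\alpha}f}_{H_x^1L_z^2}^{1/2}$ appearing in the $k=0$ term) in terms of the $u$-quantities of $\abs{\cdot}_{Z_r}$ produces spurious powers of $t$; the $t$-neutral trade $\partial_x\mapsto t^{-1/2}\partial_z$ of \eqref{facttwo}, legitimate precisely because $\alpha_3\ge 2$ and $\alpha_3\ge 3$ leave room for the shifted indices to stay non-negative, is exactly what cancels them. I would also note that, since the constraint $\partial_z^2 f=-\partial_x u$ is \emph{linear}, no nonlinear contribution ever enters the estimate of $S_4$; consequently the term $C\abs{u}_{X_r}^{1/2}\abs{u}_{Z_r}^2$ in \eqref{est:J3} is in fact superfluous here and is retained only to match the form of \eqref{est:J2} in Lemma \ref{lem:J3}.
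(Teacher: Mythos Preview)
Your argument is correct and in fact slightly sharper than the paper's. The paper takes a different shortcut: it exploits the \emph{additional} boundary condition $\partial_z^4 f|_{z=0}=-\partial_x\partial_z^2 u|_{z=0}=0$ to restrict the sums to $\alpha_3\ge 5$ (for $k=0$) and $\alpha_3\ge 4$ (for $k=1$), and then uses the identity $D^{\alpha}f=-D^{\alpha+(0,1,-2)}u$ (valid for $\alpha_3\ge 2$) together with $M_{r,\alpha}\le C\,M_{r,\alpha+(0,1,-2)}$ to rewrite each $f$-trace literally as a $u$-trace of the exact form already handled in Lemma~\ref{lem:J3}. Thus the paper reduces $S_4$ to $S_3$ and simply invokes Lemma~\ref{lem:J3}, which is why the nonlinear term $C\abs{u}_{X_r}^{1/2}\abs{u}_{Z_r}^2$ appears in \eqref{est:J3}. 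Your route keeps only the two boundary conditions $f|_{z=0}=\partial_z^2 f|_{z=0}=0$, runs the trace argument directly on $f$, and converts to $u$-quantities through the linear constraint $\partial_z^2 f=-\partial_x u$; as you observe, no nonlinear contribution arises and one obtains the cleaner bound $S_4\le Cr^{1/2}\abs{u}_{Z_r}^2$. Both approaches are valid; the paper's is shorter because it recycles Lemma~\ref{lem:J3} wholesale, while yours is more self-contained and shows the nonlinear term in \eqref{est:J3} is indeed superfluous.

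One small correction: your appeal to ``the discrete Young inequality \eqref{young}, exactly by the argument of \eqref{S111}--\eqref{applyyoung}'' is misplaced here. Those references treat the bilinear convolution sums in $S_1$; for the trace terms you only need the discrete H\"older inequality with exponents $(4,2,4)$, as in the paper's derivation of \eqref{decom:J2}. This does not affect the correctness of your argument.
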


\begin{proof}
Using the boundary conditions $f|_{z=0}=0,$ $\partial_z^2f|_{z=0}=-\partial_xu|_{z=0}=0$ and $\partial_z^4f|_{z=0}=-\partial_x\partial_z^2u|_{z=0}=0$, we get 
\begin{multline*}
   \sum_{ \alpha\in\mathbb Z_+^3 }M_{r,\alpha}^2\inner{D^{\alpha}\partial_zf|_{z=0},\  D^{\alpha} f|_{z=0}}_{H_x^1}= \sum_{\substack{\alpha\in\mathbb Z_+^3\\ \alpha_3\ge 5}}M_{r,\alpha}^2\inner{D^{\alpha}\partial_zf|_{z=0},\  D^{\alpha} f|_{z=0}}_{H_x^1}\\
   \leq C\sum_{\substack{\alpha\in\mathbb Z_+^3\\ \alpha_3\ge 5}}M_{r,\alpha+(0,1,-2)}^2\Big|\big(D^{\alpha+(0,1,-2)}\partial_zu|_{z=0},\ D^{\alpha+(0,1,-2)}u|_{z=0}\big)_{H_x^1}\Big|.
\end{multline*}
where the last inequality follows from  \eqref{sdf} and the identity
\begin{equation*}
  D^{\alpha} f=t D^{\alpha+(0,0,-2)} \partial_z^2 f=-t D^{\alpha+(0,0,-2)} \partial_xu=-D^{\alpha+(0,1,-2)}u, 
\end{equation*}
which holds for all 
   $\alpha\in\mathbb Z_+^3\ \textrm{with}\ \alpha_3\ge 2$  
 by by the relation $\partial_xu+\partial_z^2f=0$.
Similarly,  
\begin{align*}
   &\sum_{ \alpha\in\mathbb Z_+^3}M_{r,\alpha}^2\big(D^{\alpha}\partial_z^2f|_{z=0},\ D^{\alpha} \partial_zf|_{z=0}\big)_{H_x^1}
   =\sum_{\substack{\alpha\in\mathbb Z_+^3\\ \alpha_3\ge 4}}M_{r,\alpha}^2\big(D^{\alpha}\partial_z^2f|_{z=0},\ D^{\alpha} \partial_zf|_{z=0}\big)_{H_x^1}\\
   &\leq C \sum_{\substack{\alpha\in\mathbb Z_+^3\\ \alpha_3\ge 4}}M_{r,\alpha+(0,1,-2)}^2\Big|\big(D^{\alpha+(0,1,-2)}\partial_z^2u|_{z=0},\ D^{\alpha+(0,1,-2)} \partial_zu|_{z=0}\big)_{H_x^1}\Big|.
\end{align*}
We now observe that the right-hand sides of the above inequalities correspond to boundary terms of the same type as those treated in Lemma \ref{lem:J3}. Therefore, by repeating the   proof of that lemma, we obtain the desired estimate \eqref{est:J3}. This completes the proof of Lemma \ref{lem:J4}.
\end{proof}

\begin{lemma}[Estimate on $S_5$]\label{lem:J5}
    Let $S_5$ be given in \eqref{J1-5}.
    It holds that
    \begin{equation}\label{est:J4}
S_5=\sum_{ \alpha\in\mathbb Z_+^3}\frac{2\alpha_1+2\alpha_2+\alpha_3}{2t}M_{r,\alpha}^2\norm{D^{\alpha} u}_{\mathcal{H}^1}^2\leq Cr\abs{u}_{{Z}_r}^2+C\abs{u}_{X_r}\abs{u}_{Z_r}^2,      
    \end{equation}
recalling  the norms $\abs{\cdot}_{{X}_r}$ and $\abs{\cdot}_{{Z}_r}$ are defined as in \eqref{dxz}.       
\end{lemma}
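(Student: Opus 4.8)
The plan is to estimate $S_5$ termwise in $\alpha$, starting from the crude reduction $2\alpha_1+2\alpha_2+\alpha_3\le 2\abs\alpha$, which gives
\[
S_5\le\sum_{\alpha\in\mathbb Z_+^3}\frac{\abs\alpha}{t}M_{r,\alpha}^2\norm{D^\alpha u}_{\mathcal H^1}^2.
\]
The guiding principle is that the singular factor $t^{-1}$ must be absorbed by extracting one derivative from $D^\alpha u$: by \eqref{facttwo}, extracting $\partial_z$ costs only a factor $t^{1/2}$ and so cancels $t^{-1}$ against the quadratic scaling, whereas extracting a tangential derivative costs a full power of $t$ that cannot be compensated directly. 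For this reason I would split the sum according to whether $\alpha_3\ge 1$ or $\alpha_3=0$. A recurring tool for the second regime is the weighted Poincaré inequality $\norm{g}_{L^2_z}\le C\norm{z\partial_z g}_{L^2_z}$ underlying \eqref{hi} (the boundary term at $z=0$ vanishes thanks to the explicit $z$-weight), which applied to $g=D^\alpha u$ yields $\norm{D^\alpha u}_{\mathcal H^1}^2\le C\norm{\comi z D^\alpha\partial_z u}_{H_x^1L_z^2}^2$.

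For $\alpha_3\ge 1$ the estimate is clean. Using \eqref{facttwo} in the form $D^\alpha=t^{1/2}D^{\alpha_*}\partial_z$ gives $t^{-1}\norm{D^\alpha u}_{\mathcal H^1}^2=\norm{D^{\alpha_*}\partial_z u}_{\mathcal H^1}^2$, so the factor $t^{-1}$ disappears. Since the order now drops by one, the weight gain in \eqref{sdf} applies in the sharpened form $\abs\alpha M_{r,\alpha}^2\le Cr^2 M_{r,\alpha_*}^2$; reindexing $\gamma=\alpha_*$ and using \eqref{xz}, that is $\abs{\partial_z u}_{X_r}^2\le C\abs u_{Z_r}^2$, bounds this part of $S_5$ by $Cr^2\abs u_{Z_r}^2\le Cr\abs u_{Z_r}^2$, consistent with the first term on the right-hand side of \eqref{est:J4}.

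The case $\alpha_3=0$ is the heart of the matter, since there is no normal derivative to peel off and a bare extraction of $\partial_x$ or $\partial_t$ leaves an uncompensated power of $t$. The idea is to feed in the equations of \eqref{eq:2D}. For the factor $\alpha_1$ I would write $D^\alpha=tD^{\tilde\alpha}\partial_t$ and substitute $\partial_t u=\partial_z^2u+\partial_x f-(u\partial_x u+w\partial_z u)$; because $\partial_t$ and $\partial_z^2$ carry the same $D$-scaling, the linear term $D^{\tilde\alpha}\partial_z^2u$ is a genuine $\abs\cdot_{Z_r}$-object of order $\abs\alpha-1$, so \eqref{sdf} again supplies the gain $r$, while the nonlinear term reduces precisely to the quantity controlled by \eqref{claimone} and hence contributes $C\abs u_{X_r}\abs u_{Z_r}^2$. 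For the factor $\alpha_2$ I would bring in the intrinsic non-local Shercliff diffusion through $\partial_x u=-\partial_z^2 f$, which trades a tangential derivative for two normal derivatives of $f$ and thereby puts the term in contact with the $z$-dissipation and the $x$-dissipation $\norm{\comi z D^\gamma\partial_x u}_{H_x^1L_z^2}$ inside $\abs\cdot_{Z_r}$. The integrations by parts in $z$ along the way generate boundary contributions at $z=0$ that are of exactly the same shape as those in Lemmas \ref{lem:J3} and \ref{lem:J4}, and can therefore be absorbed by the bounds already proved there.

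The hardest part will be the purely tangential block $\alpha_1=\alpha_3=0$: here there is neither a normal derivative nor a time derivative to extract, and the entire burden of absorbing $t^{-1}$ — together with the combinatorial factor $\abs\alpha$ it is multiplied by — must be carried by the Shercliff relation $\partial_x u=-\partial_z^2 f$. The delicate point is to recover the small constant $r$, rather than an uncontrolled multiple of $\abs\alpha$, after routing these terms through $f$ and back to the dissipation; this is exactly where the non-local diffusion of the Shercliff layer, stressed throughout the introduction, is indispensable, and it is the step I would expect to require the most care.
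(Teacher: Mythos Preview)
Your treatment of the case $\alpha_3\ge1$ and of the $\alpha_1$-contribution when $\alpha_3=0$ (via $D^\alpha=tD^{\tilde\alpha}\partial_t$ and substitution of the equation) matches the paper's argument. Where you diverge is in the block $\alpha_3=0$, $\alpha_2\ge1$, which you single out---especially the purely tangential case $\alpha_1=\alpha_3=0$---as the most delicate step. In the paper this is in fact the \emph{easiest} of the three, and neither the equation nor the Shercliff relation $\partial_xu=-\partial_z^2f$ is invoked there.

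The observation you are missing is that $\norm{\comi zD^\gamma\partial_xu}_{H_x^1L_z^2}$ already sits inside the dissipation $\abs u_{Z_r}$, so extracting $\partial_x$ \emph{by polarization} (rewriting only one of the two copies of $D^\alpha u$, not squaring first) is enough. Concretely, for $\alpha_3=0$ and $\alpha_2\ge1$ the paper writes
\[
\tfrac{1}{t}\norm{D^\alpha u}_{H_x^1L_z^2}^2=\big(D^{\alpha-(0,1,0)}\partial_xu,\,D^\alpha u\big)_{H_x^1L_z^2},
\]
inserts a factor $z\cdot z^{-1}$, and applies Hardy's inequality (legitimate since $\alpha_3=0$ forces $D^\alpha u|_{z=0}=0$) together with \eqref{hi} to obtain
\[
\tfrac{1}{t}\norm{D^\alpha u}_{\mathcal H^1}^2\le C\,\norm{\comi zD^{\alpha-(0,1,0)}\partial_xu}_{H_x^1L_z^2}\,\norm{\comi zD^\alpha\partial_z^2u}_{H_x^1L_z^2},
\]
the $\norm{\comi z\partial_z\,\cdot\,}$-piece of $\mathcal H^1$ being handled the same way after one integration by parts. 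Both factors on the right are $\abs u_{Z_r}$-quantities, and the combinatorial prefactor is absorbed via $\abs\alpha M_{r,\alpha}\le Cr\,M_{r,\alpha-(0,1,0)}$, giving $Cr\abs u_{Z_r}^2$ immediately.

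Your route through $f$ is thus an unnecessary detour; executed carefully it would loop back to the same bound (after one integration by parts you are left with $\norm{D^\gamma\partial_zf}$, which Hardy controls by $\norm{\comi zD^\gamma\partial_xu}$ anyway). Note also that for $\alpha_3=0$ the traces $D^\alpha u|_{z=0}$, $D^\alpha\partial_z^2u|_{z=0}$, $D^\alpha f|_{z=0}$, $D^\alpha\partial_z^2f|_{z=0}$ all vanish, so the boundary contributions you anticipate in the style of Lemmas~\ref{lem:J3}--\ref{lem:J4} do not actually arise in this block.
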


\begin{proof}
We begin by decomposing $S_5$ as 
\begin{equation}\label{s5123}
\begin{aligned}
	S_5&=\bigg(\sum_{\substack{ \alpha\in\mathbb Z_+^3\\  \alpha_3\geq 1 }} +\sum_{\substack{ \alpha\in\mathbb Z_+^3\\  \alpha_3=0,\, \alpha_2 \geq 1 }} +\sum_{\substack{ \alpha\in\mathbb Z_+^3\\  \alpha_3=\alpha_2=0,\, \alpha_1\geq 1 }}\bigg)\frac{2\alpha_1+2\alpha_2+\alpha_3}{2t}M_{r,\alpha}^2\norm{D^{\alpha} u}_{\mathcal{H}^1}^2\\
	&\stackrel{\rm def}{=}S_{5,1}+S_{5,2}+S_{5,3}. 
	\end{aligned}
\end{equation}
For $\alpha\in\mathbb Z_+^3$ with $\alpha_3\ge 1$,  recalling $\alpha_*=\alpha-(0,0,1)$ and   using \eqref{sdf} and \eqref{facttwo},  we obtain
    \begin{equation}\label{s51}
       S_{5,1}\leq Cr^2\sum_{\substack{\alpha\in\mathbb Z_+^3\\ \alpha_3\ge 1}}\frac{2\alpha_1+2\alpha_2+\alpha_3}{2\abs{\alpha}^2}M_{r,\alpha_*}^2\norm{D^{\alpha_*}\partial_z u}_{\mathcal{H}^1}^2\leq Cr^2\abs{\partial_zu}_{{X}_r}^2\leq Cr\abs{u}_{{Z}_r}^2,
    \end{equation}
   where the last inequality follows from \eqref{df} as well as $0<r<1$. 
   
   To estimate $S_{5,2}$, we claim that for $\alpha\in\mathbb Z_+^3$ with $\alpha_3=0$ and $\alpha_2\ge 1,$  the following estimate holds:
   \begin{equation}
   	\label{s52}
   \frac{1}{t}	\norm{D^{\alpha} u}_{\mathcal{H}^1}^2\leq C\norm{\comi zD^{\alpha+(0,-1,0)}\partial_xu}_{H_x^1L_z^2}\norm{\comi zD^\alpha \partial_z^2u}_{H_x^1L_z^2}.
   \end{equation}
  To verify this, fix such a multi-index $\alpha.$    Recalling 
  the definition   of   $D^\alpha$ in \eqref{def:Gamma},  and  using integration by parts and  Hardy's inequality,  we obtain  
      \begin{align*}
&	\frac{1}{t}\norm{\comi z\partial_z D^\alpha u}_{ H_x^1L_z^2}^2= \big(\comi zD^{\alpha+(0,-1,0)}\partial_x\partial_zu,\ \comi z D^\alpha \partial_zu\big)_{H_x^1L_z^2}\\
&\leq C\norm{\comi zD^{\alpha+(0,-1,0)}\partial_xu}_{H_x^1L_z^2}\big (\norm{D^\alpha \partial_zu}_{H_x^1L_z^2}+\norm{\comi zD^\alpha \partial_z^2u}_{H_x^1L_z^2}\big)\\
&\leq C\norm{\comi zD^{\alpha+(0,-1,0)}\partial_xu}_{H_x^1L_z^2}\norm{\comi zD^\alpha \partial_z^2u}_{H_x^1L_z^2},
    \end{align*} 
    and similarly,
  \begin{equation*}
  	\begin{aligned}
  	\frac{1}{t}\norm{D^\alpha u}_{ H_x^1L_z^2}^2&=\big(D^{\alpha+(0,-1,0)}\partial_xu,\ D^\alpha u\big)_{H_x^1L_z^2}\\
  	&\leq  \norm{ zD^{\alpha+(0,-1,0)}\partial_xu}_{H_x^1L_z^2} \norm{z^{-1}D^\alpha u}_{H_x^1L_z^2} \\
  	&\leq C\norm{\comi zD^{\alpha+(0,-1,0)}\partial_xu}_{H_x^1L_z^2}\norm{\comi zD^\alpha \partial_z^2u}_{H_x^1L_z^2}. 
  	\end{aligned}
  \end{equation*}
 Combining these two estimates and using the definition of the  $\mathcal H^1$-norm  in  \eqref{def:weightnorm}   yields assertion \eqref{s52}.  Moreover, it follows from \eqref{sdf} that  
 \begin{equation*}
 	\abs\alpha M_{r,\alpha} \leq  C r  M_{r, \alpha+(0,-1,0)},
 \end{equation*}     
 which along with \eqref{s52} yields
 \begin{equation}\label{s52++}
 \begin{aligned}
 	&S_{5,2}= \sum_{\substack{ \alpha\in\mathbb Z_+^3\\  \alpha_3=0,\, \alpha_2 \geq 1 }}  \frac{2\alpha_1+2\alpha_2+\alpha_3}{2t}M_{r,\alpha}^2\norm{D^{\alpha} u}_{\mathcal{H}^1}^2\\
 	&\leq  Cr \sum_{\substack{\alpha\in\mathbb Z_+^3\\  \alpha_3=0,\, \alpha_2 \geq 1}} M_{r, \alpha+(0,-1,0)} \norm{\comi zD^{\alpha+(0,-1,0)}\partial_xu}_{H_x^1L_z^2} \big(M_{r,\alpha}\norm{\comi zD^\alpha \partial_z^2u}_{H_x^1L_z^2}\big)\\
 	&\leq Cr\abs{u}_{{Z}_r}^2,
 	\end{aligned}
 \end{equation} 
 where the last inequality follows from the definition of  $\abs{\cdot}_{{Z}_r}$ in \eqref{dxz}.  

It remains to estimate $S_{5,3}$. 
Recall $\tilde\alpha=\alpha-(1,0,0)$ for $\alpha\in\mathbb Z_+^3$ with $\alpha_1\geq 1. $ Then we use the fact that
\begin{equation*}
	D^{\alpha}u=tD^{\tilde\alpha}\partial_tu\  \textrm{ and } \ \partial_tu=\partial_z^2u+\partial_xf-u\partial_xu-w\partial_zu
\end{equation*} 
 to write
\begin{equation}\label{decom:K2}
\begin{aligned}
S_{5,3}&= \sum_{\substack{ \alpha\in\mathbb Z_+^3\\  \alpha_3=\alpha_2=0,\, \alpha_1 \geq 1 }}  \frac{2\alpha_1+2\alpha_2+\alpha_3}{2t}M_{r,\alpha}^2\norm{D^{\alpha} u}_{\mathcal{H}^1}^2\\
&= \sum_{\substack{ \alpha\in\mathbb Z_+^3\\  \alpha_3=\alpha_2=0,\, \alpha_1 \geq 1 }}  \frac{2\alpha_1+2\alpha_2+\alpha_3}{2}M_{r,\alpha}^2\big(D^{\tilde \alpha}(\partial_z^2u+\partial_xf),\ D^\alpha u\big)_{\mathcal{H}^1}\\
&\qquad -\sum_{\substack{ \alpha\in\mathbb Z_+^3\\  \alpha_3=\alpha_2=0,\, \alpha_1 \geq 1 }} \frac{2\alpha_1+2\alpha_2+\alpha_3}{2}M_{r,\alpha}^2\big(D^{\tilde \alpha}(u\partial_xu+w\partial_zu),\ D^\alpha u\big)_{\mathcal{H}^1}.
\end{aligned}
\end{equation}
The condition $\alpha_3=0,$ together with the boundary conditions $u|_{z=0}=\partial_z^2 u|_{z=0}=0$,   enables us to  apply integration  by parts to get
\begin{equation}\label{uuuu}
\begin{aligned}
 &\big(D^{\tilde \alpha}\partial_z^2u,\ D^\alpha u\big)_{\mathcal{H}^1}=\big(D^{\tilde \alpha}\partial_z^2u,\ D^\alpha u\big)_{H_x^1L_z^2}+\big(\comi zD^{\tilde \alpha}\partial_z^3u,\ \comi z D^\alpha\partial_z u\big)_{H_x^1 L_z^2}\\
 &\leq \norm{D^{\tilde \alpha}\partial_zu}_{H_x^1L_z^2}\norm{D^{ \alpha}\partial_zu}_{H_x^1L_z^2}+C\norm{\comi zD^{\tilde \alpha}\partial_z^2u}_{H_x^1L_z^2}\norm{\comi zD^{ \alpha}\partial_z^2u}_{H_x^1L_z^2}\\
 &\quad +C\norm{\comi zD^{\tilde \alpha}\partial_z^2u}_{H_x^1L_z^2}\norm{D^{ \alpha}\partial_zu}_{H_x^1L_z^2}\\
 &\leq C\norm{\comi zD^{\tilde \alpha}\partial_z^2u}_{H_x^1L_z^2}\norm{\comi zD^{ \alpha}\partial_z^2u}_{H_x^1L_z^2},
\end{aligned}
\end{equation}
where the last line follows from 
Hardy's inequality.   Combining this with \eqref{facttwo} and using the definition of $\abs{\cdot}_{Z_r}$ in \eqref{dxz}, we obtain 
\begin{equation}\label{fds}
\begin{aligned}
	& \sum_{\substack{ \alpha\in\mathbb Z_+^3\\  \alpha_3=\alpha_2=0,\, \alpha_1 \geq 1 }}  \frac{2\alpha_1+2\alpha_2+\alpha_3}{2}M_{r,\alpha}^2\big(D^{\tilde \alpha} \partial_z^2u,\ D^\alpha u\big)_{\mathcal{H}^1} \\
	&\leq   Cr \sum_{  \substack{ \alpha\in\mathbb Z_+^3\\   \alpha_1 \geq 1 }} \big(  M_{r,\tilde \alpha} \norm{\comi zD^{\tilde \alpha}\partial_z^2u}_{H_x^1L_z^2}\big) \big(M_{r,\alpha}\norm{\comi zD^{ \alpha}\partial_z^2u}_{H_x^1L_z^2}\big)\leq Cr\abs{u}_{{Z}_r}^2.
	 \end{aligned}
\end{equation}
Using  the identity 
   $\partial_xu+\partial_z^2f=0$, we repeat the argument used in \eqref{uuuu} to conclude
\begin{equation*}
 \big(D^{\tilde \alpha}\partial_xf,\ D^\alpha u\big)_{\mathcal{H}^1}=\big(D^{\tilde \alpha}f,\ D^\alpha \partial_z^2 f\big)_{\mathcal{H}^1}\leq C\norm{\comi zD^{\tilde \alpha}\partial_xu}_{H_x^1L_z^2}\norm{\comi zD^{ \alpha}\partial_xu}_{H_x^1L_z^2}.
\end{equation*}
Thus,  following a similar argument as in \eqref{fds}, we have
\begin{equation*}
	\begin{aligned}
	 &\sum_{\substack{ \alpha\in\mathbb Z_+^3\\  \alpha_3=\alpha_2=0,\, \alpha_1 \geq 1 }}  \frac{2\alpha_1+2\alpha_2+\alpha_3}{2}M_{r,\alpha}^2\big(D^{\tilde \alpha} \partial_xf,\ D^\alpha u\big)_{\mathcal{H}^1} \leq Cr\abs{u}_{{Z}_r}^2.
	\end{aligned}
\end{equation*}
Combining this  with \eqref{fds}
we conclude that 
 \begin{equation*}
	\begin{aligned}
	  \sum_{\substack{ \alpha\in\mathbb Z_+^3\\  \alpha_3=\alpha_2=0,\, \alpha_1 \geq 1 }}  \frac{2\alpha_1+2\alpha_2+\alpha_3}{2}M_{r,\alpha}^2\big(D^{\tilde \alpha}(\partial_z^2u+\partial_xf),\ D^\alpha u\big)_{\mathcal{H}^1}\leq 	Cr\abs{u}_{{Z}_r}^2.	
	  \end{aligned}
\end{equation*}
On the other hand, using \eqref{facttwo} gives
\begin{multline*}
 	-\sum_{\substack{ \alpha\in\mathbb Z_+^3\\  \alpha_3=\alpha_2=0,\, \alpha_1 \geq 1 }} \frac{2\alpha_1+2\alpha_2+\alpha_3}{2}M_{r,\alpha}^2\big(D^{\tilde \alpha}(u\partial_xu+w\partial_zu),\ D^\alpha u\big)_{\mathcal{H}^1}\\
	 \leq C\sum_{\substack{ \alpha\in\mathbb Z_+^3\\   \alpha_1 \geq 1 }}  M_{r,\alpha}M_{r,\tilde \alpha} \big|\big(D^{\tilde \alpha}(u\partial_xu+w\partial_zu),\ D^\alpha u\big)_{\mathcal{H}^1}\big|\leq C\abs{u}_{X_r}\abs{u}_{Z_r}^2,
\end{multline*} 
where the last inequality follows from an analogous argument as that in Lemma \ref{j1++}.  
Combining the two estimates above with \eqref{decom:K2} yields
\begin{equation*}
   S_{5,3}\leq Cr\abs{u}_{Z_r}^2+ C\abs{u}_{X_r}\abs{u}_{Z_r}^2.
\end{equation*}
Substituting this and \eqref{s51}, \eqref{s52++} into \eqref{s5123} yields the desired assertion \eqref{est:J4} of Lemma   \ref{lem:J5}.  
 This completes the proof.
 \end{proof}

 \begin{proof}[Completing the proof of Proposition \ref{prop:y}]
Substituting the estimates in Lemmas \ref{j1++}-\ref{lem:J5} into \eqref{est:yy} yields
\begin{equation}\label{est:prix}
  \frac{1}{2}\frac{d}{dt}\abs{u}_{{X}_r}^2+ \abs{u}_{{Z}_r}^2\leq  Cr^\frac{1}{2}\abs{u}_{{Z}_r}^2+C\big(\abs{u}_{{X}_r}^\frac{1}{2}+\abs{u}_{{X}_r}\big)\abs{u}_{{Z}_r}^2.
\end{equation}
Together with the smallness assumption \eqref{ass:initial}, this enables us to apply a standard bootstrap argument to establish the desired estimate \eqref{est:y} for sufficiently small $r$.  To do this, suppose the solution satisfies
 \begin{equation}\label{ass:pri}
\forall\ t\geq 0,\quad  \abs{u(t)}_{X_{r}}^2+\int^t_0\abs{u(s)}_{Z_{r}}^2ds\leq 2 \varepsilon_0^2.
 \end{equation}
 Then, combining \eqref{ass:pri} with \eqref{est:prix} gives
\begin{equation*}
\forall\ t\ge0,\quad \frac{1}{2}\frac{d}{dt}\abs{u}_{{X}_r}^2+ \Big(1-Cr^{\frac12}-2C\varepsilon_0^\frac{1}{2}-2C\varepsilon_0\Big)\abs{u}_{{Z}_r}^2\leq 0. 
\end{equation*}
By choosing $r, \varepsilon_0>0$ sufficiently small  such that $1-Cr^{\frac12}-2C\varepsilon_0^\frac{1}{2}-2C\varepsilon_0 \geq \frac{1}{2}$, we obtain from the above estimate  that
\begin{equation}\label{ffe}
\forall\ t\ge0,\quad \frac{d}{dt}\abs{u}_{{X}_r}^2+ \abs{u}_{{Z}_r}^2\leq 0.
\end{equation}
We now verify the short-time behavior:
\begin{equation}\label{initialtime}
     \lim_{t\to0}\abs{u(t)}_{X_r}^2= \norm{u_0}_{\mathcal{H}^1}^2.
\end{equation}
Recall that $S_5$ is defined in \eqref{J1-5}. Then 
\begin{multline*}
\int^{1}_0 t^{-1}\sum_{\substack{\alpha\in\mathbb Z_+^3\\ \abs\alpha\geq 1}} M_{r,\alpha}^2\norm{D^\alpha u}_{\mathcal{H}^1}^2dt\leq 2\int^{1}_0\sum_{\substack{\alpha\in\mathbb Z_+^3\\ \abs\alpha\geq 1}} \frac{2\alpha_1+2\alpha_2+\alpha_3}{2t}M_{r,\alpha}^2\norm{D^\alpha u}_{\mathcal{H}^1}^2dt\\
\leq C\int^{1}_0S_5dt\leq Cr\int^{1}_0\abs{u(t)}_{Z_r}^2dt+C \big(\sup_{0\leq t\leq 1}\abs{u(t)}_{X_r}\big)\int^{1}_0\abs{u(t)}_{Z_r}^2dt<+\infty,
\end{multline*}
the last line using Lemma \ref{lem:J5} and assumption \eqref{ass:pri}. This, with the continuity of the function
\begin{equation*}
    t\longrightarrow  \sum_{\substack{\alpha\in\mathbb Z_+^3\\ \abs\alpha\geq 1}} M_{r,\alpha}^2\norm{D^\alpha u}_{\mathcal{H}^1}^2,
\end{equation*}
implies that
\begin{equation*}
    \lim_{t\to 0}\sum_{\substack{\alpha\in\mathbb Z_+^3\\ \abs\alpha\geq 1}} M_{r,\alpha}^2\norm{D^\alpha u}_{\mathcal{H}^1}^2=0.
\end{equation*}
Therefore, from the definition of $\abs{u}_{X_r}$,  we deduce that
\begin{equation*}
	 \lim_{t\to 0}\abs{u}_{X_r}^2=\lim_{t\rightarrow0}  \Big(\norm{u}_{\mathcal{H}^1}^2+ \sum_{\substack{\alpha\in\mathbb Z_+^3\\ \abs\alpha\geq 1}} M_{r,\alpha}^2\norm{D^\alpha u}_{\mathcal{H}^1}^2\Big)=\norm{u_0}_{\mathcal{H}^1}^2,
\end{equation*}
which gives \eqref{initialtime}.   
Integrating \eqref{ffe} in time,  and using \eqref{initialtime} and assumption \eqref{ass:initial},  we conclude
 \begin{equation*}
   \forall\ t\geq 0,\quad  \abs{u(t)}_{X_{r}}^2+\int^t_0\abs{u(s)}_{Z_{r}}^2ds\leq   \varepsilon_0^2.
\end{equation*}
This closes the bootstrap argument and yields the desired estimate \eqref{est:y}. The proof of Proposition \ref{prop:y} is thus completed.
\end{proof}

\begin{proof}[Completing the proof of Theorem \ref{thm:2D}]
   By Proposition \ref{prop:y}, we obtain, recalling the definition of the norm $\abs{\cdot}_{{X}_r}$ in \eqref{dxz},
    \begin{equation*}
        \forall\ \alpha\in\mathbb Z_+^3,\quad  t^{\alpha_1+\alpha_2+\frac{\alpha_3}{2}}\norm{\partial_t^{\alpha_1}\partial_x^{\alpha_3}\partial_z^{\alpha_3}u}_{\mathcal{H}^1}\leq \frac{\varepsilon_0\abs{\alpha}!}{r^{\abs{\alpha}}(\abs{\alpha}+1)^4}\leq \varepsilon_0r^{-\abs{\alpha}}\abs{\alpha}!.
    \end{equation*}
Then choosing $C_0=r^{-1}$ yields assertion \eqref{result:smoothing}, which completes the proof of Theorem \ref{thm:2D}.  
\end{proof}

\section{Proof of Theorem \ref{thm:linear}}\label{sec:3D}
This section is devoted to the proof of Theorem \ref{thm:linear}, which concerns the global well-posedness  and analytic smoothing  effect of the three-dimensional linearized system \eqref{3Dlinear}.  However, as indicated in Remark \ref{anasmooth}, we will focus solely on proving the global well-posedness, since the analytic smoothing effect can be established analogously to the two-dimensional case without substantial new difficulties.

To prove the global well-posedness of system \eqref{3Dlinear}, we first recall some key estimates in the weighted Lebesgue space. With these estimates, we then establish decay properties for the coefficients $U$ and $V$ in \eqref{3Dlinear},  which enables us to conclude the global well-posedness of the system.

\subsection{Preliminaries: estimates  in the weighted Lebesgue space}
In this part, we present some estimates in the weighted Lebesgue space $L_{\mu_\lambda}^2(\mathbb R_+),$  defined as in  \eqref{leb} and  equipped with the norm
\begin{equation*}
    \norm{h}_{L_{\mu_\lambda}^2}=\Big(\int_{\mathbb{R}_+} \mu_\lambda h^2dz\Big)^{1\over2},  \quad \mu_\lambda = \exp\Big(\frac{\lambda z^2}{4(1+t)}\Big).
 \end{equation*}
Recall $\mu=\mu_1,$ that is, 
\begin{equation}\label{def:mu1}
	\mu = \exp\Big(\frac{z^2}{4(1+t)}\Big).
\end{equation}

\begin{lemma}[Lemma 2.5 in \cite{MR4701733}]\label{lema}
Let $h(t,\cdot)$ be a function belonging to $H^1_{\mu_\lambda}(\mathbb R_+)$ with $0\leq\lambda\leq 1$. Then 
\begin{align}\label{ADD1}
    \frac{\lambda^{1\over2}}{  \sqrt{2} (1+t)^{1\over2}}\norm{h}_{L_{\mu_{\lambda}}^2}\leq \norm{\partial_z h}_{L^2_{\mu_{\lambda}}},   
\end{align}
and
\begin{align*}
   \frac{\lambda^{1\over2}}{  2 (1+t)^{1\over2}}\norm{h}_{L_{\mu_{\lambda}}^2}+  \frac{\lambda }{4}\Big\|\frac{z}{1+t}h\Big\|_{L^2_{\mu_{\lambda}}}\leq 2\norm{\partial_z h}_{L^2_{\mu_{\lambda}}}.
\end{align*}
\end{lemma}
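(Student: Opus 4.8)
The plan is to derive both inequalities from a single quadratic relation obtained through one integration by parts against the Gaussian weight. Write $\mu_\lambda=e^{\psi}$ with $\psi=\psi(t,z)=\frac{\lambda z^2}{4(1+t)}$, so that $\partial_z\mu_\lambda=(\partial_z\psi)\mu_\lambda$ and $\partial_z\psi=\frac{\lambda z}{2(1+t)}$. The decisive structural point is that
\[
\partial_z^2\psi=\frac{\lambda}{2(1+t)}
\]
is independent of $z$ and coincides, after squaring, with the constant on the left-hand side of \eqref{ADD1}. Thus I would start from the trivial identity $\frac{\lambda}{2(1+t)}\norm{h}_{L_{\mu_\lambda}^2}^2=\int_0^\infty(\partial_z^2\psi)\mu_\lambda h^2\,dz$ and transfer derivatives onto $h$.

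Concretely, I would use the pointwise identity $(\partial_z^2\psi)\mu_\lambda=\partial_z\big((\partial_z\psi)\mu_\lambda\big)-(\partial_z\psi)^2\mu_\lambda$, multiply by $h^2$, integrate over $\mathbb{R}_+$, and integrate by parts in the first term to reach
\[
\frac{\lambda}{2(1+t)}\norm{h}_{L_{\mu_\lambda}^2}^2=\big[(\partial_z\psi)\mu_\lambda h^2\big]_0^{\infty}-2\int_0^\infty(\partial_z\psi)\mu_\lambda\,h\,\partial_z h\,dz-\norm{(\partial_z\psi)h}_{L_{\mu_\lambda}^2}^2.
\]
The boundary term vanishes at $z=0$ because $\partial_z\psi|_{z=0}=0$, and at $z=+\infty$ by the decay built into the weighted space. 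Abbreviating $A=\norm{(\partial_z\psi)h}_{L_{\mu_\lambda}^2}=\frac{\lambda}{2}\big\|\frac{z}{1+t}h\big\|_{L_{\mu_\lambda}^2}$ and $B=\norm{\partial_z h}_{L_{\mu_\lambda}^2}$ and bounding the middle integral by Cauchy--Schwarz ($\big|\int(\partial_z\psi)\mu_\lambda h\,\partial_z h\big|\le AB$), I obtain $\frac{\lambda}{2(1+t)}\norm{h}_{L_{\mu_\lambda}^2}^2+A^2\le 2AB$, that is, the single master relation
\[
\frac{\lambda}{2(1+t)}\norm{h}_{L_{\mu_\lambda}^2}^2+(A-B)^2\le B^2 .
\]

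Both assertions then fall out by elementary algebra. Dropping the nonnegative term $(A-B)^2$ gives $\frac{\lambda}{2(1+t)}\norm{h}_{L_{\mu_\lambda}^2}^2\le B^2$, which is precisely \eqref{ADD1}. For the second inequality, the same relation forces $(A-B)^2\le B^2$, hence $A\le 2B$; since \eqref{ADD1} already yields $\frac{\lambda^{1/2}}{2(1+t)^{1/2}}\norm{h}_{L_{\mu_\lambda}^2}\le \frac{1}{\sqrt2}B$ and the definition of $A$ gives $\frac{\lambda}{4}\big\|\frac{z}{1+t}h\big\|_{L_{\mu_\lambda}^2}=\tfrac12 A\le B$, adding the two estimates produces $\frac{\lambda^{1/2}}{2(1+t)^{1/2}}\norm{h}_{L_{\mu_\lambda}^2}+\frac{\lambda}{4}\big\|\frac{z}{1+t}h\big\|_{L_{\mu_\lambda}^2}\le\big(1+\tfrac{1}{\sqrt2}\big)B\le 2B$, as required.

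The only genuine obstacle is the rigorous justification of the integration by parts and the vanishing of the boundary contribution at $z=+\infty$ for a general $h\in H^1_{\mu_\lambda}$ (and the implicit finiteness of $A$). I would handle this by the standard truncation device: insert a cutoff $\chi(z/R)$, carry out the identity where every quantity is manifestly finite, and let $R\to\infty$, controlling the error term $\int (\partial_z\chi(z/R))(\partial_z\psi)\mu_\lambda h^2\,dz$ (supported on $z\sim R$) by the finiteness of $\norm{h}_{L^2_{\mu_\lambda}}$ and $B$; if $B=+\infty$ the inequalities are trivial, so one may assume $B<\infty$ throughout. Everything else is a direct, weight-adapted computation, and one may alternatively simply invoke Lemma~2.5 of \cite{MR4701733}.
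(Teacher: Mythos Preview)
Your proof is correct; the integration-by-parts identity against the Gaussian weight, followed by Cauchy--Schwarz and elementary algebra on the resulting quadratic in $A$ and $B$, cleanly yields both inequalities, and the truncation device legitimately justifies the vanishing boundary term and the finiteness of $A$. The paper itself offers no proof of this lemma, merely citing Lemma~2.5 of \cite{MR4701733}, so there is nothing further to compare.
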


\begin{lemma}\label{lemv}
Let $h(t,\cdot)$ be a function belonging to $H^1_{\mu_\lambda}(\mathbb R_+)$ with $0<\lambda \leq 1$. Then  
\begin{align*}
  \norm{z^kh}_{L_z^{\infty}}\leq C_{\lambda} (1+t)^{\frac{1+2k}{4}}\norm{\partial_z h}_{L^2_{\mu_\lambda}},\quad k=0,1,2,
\end{align*}
where $C_\lambda$  is a  constant  depending  on $\lambda$.
\end{lemma}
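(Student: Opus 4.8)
The plan is to prove the three bounds ($k=0,1,2$) simultaneously by a weighted Agmon-type argument, reducing the $L^\infty_z$-norm of $z^kh$ to a product of weighted $L^2_{\mu_\lambda}$-norms and then absorbing the powers of $z$ into the Gaussian weight at the cost of explicit factors of $(1+t)$. Writing $\tau=1+t$ for brevity, the starting point is the identity $\big(z^kh(z)\big)^2=-\int_z^{+\infty}\partial_{\tilde z}\big(\tilde z^{2k}h(\tilde z)^2\big)\,d\tilde z$, valid once one checks that $z^{2k}h^2$ tends to $0$ as $z\to+\infty$; this yields $\norm{z^kh}_{L^\infty_z}^2\le\int_0^{+\infty}\abs{\partial_z(z^{2k}h^2)}\,dz$.

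Next I would expand $\partial_z(z^{2k}h^2)=2kz^{2k-1}h^2+2z^{2k}h\,\partial_zh$ and bound the two resulting integrals by Cauchy--Schwarz, namely $\int_0^{+\infty}z^{2k-1}h^2\,dz\le\norm{z^kh}_{L^2}\norm{z^{k-1}h}_{L^2}$ and $\int_0^{+\infty}z^{2k}\abs{h}\abs{\partial_zh}\,dz\le\norm{z^kh}_{L^2}\norm{z^k\partial_zh}_{L^2}$ (the first integral is absent when $k=0$). The crucial elementary ingredient is the pointwise weight comparison $z^{2j}\le C_{j,\lambda}\,\tau^{j}\mu_\lambda$ for each $j\ge0$, which follows from the scalar inequality $s^{j}\le C_{j,\lambda}e^{\lambda s/4}$ applied with $s=z^2/\tau$. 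It converts each unweighted norm into a weighted one, giving $\norm{z^{j}\phi}_{L^2}\le C_{j,\lambda}\tau^{j/2}\norm{\phi}_{L^2_{\mu_\lambda}}$ for $\phi\in\{h,\partial_zh\}$ (here one also uses $\mu_\lambda\ge1$, so that unweighted $L^2$-norms are dominated by weighted ones).

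Finally I would feed in Lemma \ref{lema}, specifically \eqref{ADD1} in the rearranged form $\norm{h}_{L^2_{\mu_\lambda}}\le C_\lambda\tau^{1/2}\norm{\partial_zh}_{L^2_{\mu_\lambda}}$, to replace every occurrence of $\norm{h}_{L^2_{\mu_\lambda}}$ by $\norm{\partial_zh}_{L^2_{\mu_\lambda}}$. A direct bookkeeping then shows both integrals are bounded by $C_\lambda\tau^{(2k+1)/2}\norm{\partial_zh}_{L^2_{\mu_\lambda}}^2$ (the $z^{2k}h\,\partial_zh$ term contributes $\tau^{k/2}\cdot\tau^{k/2}\cdot\tau^{1/2}$, and the $z^{2k-1}h^2$ term contributes $\tau^{k/2}\cdot\tau^{(k-1)/2}\cdot\tau$, the same power); taking the square root gives $\norm{z^kh}_{L^\infty_z}\le C_\lambda\tau^{(2k+1)/4}\norm{\partial_zh}_{L^2_{\mu_\lambda}}$, which is exactly the claimed estimate since $\tau^{(2k+1)/4}=(1+t)^{(1+2k)/4}$. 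The only point requiring genuine care, rather than routine computation, is the justification that the boundary contribution at $z=+\infty$ vanishes: this follows because the same Cauchy--Schwarz and weight-comparison bounds show $z^{2k}h^2\in L^1(\mathbb R_+)$ with $L^1$ derivative, so it has a limit at infinity which must be $0$.
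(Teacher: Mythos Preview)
Your proof is correct but follows a genuinely different route from the paper's. The paper argues in one step: writing $h(z)=-\int_z^{+\infty}\partial_{\tilde z}h\,d\tilde z$, it bounds $z^k\le\tilde z^k$ inside the integral, inserts the trivial factorization $1=\mu_{-\lambda/2}\mu_{\lambda/2}$, and applies Cauchy--Schwarz to obtain $\norm{z^kh}_{L^\infty_z}\le\norm{z^k\mu_{-\lambda/2}}_{L^2_z}\norm{\partial_zh}_{L^2_{\mu_\lambda}}$, after which the first factor is computed explicitly as $C_\lambda(1+t)^{(1+2k)/4}$. In particular the paper's proof is self-contained and never invokes Lemma~\ref{lema}. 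Your Agmon-type argument instead squares $z^kh$, integrates the derivative, splits with Cauchy--Schwarz, absorbs powers of $z$ into $\mu_\lambda$ via the pointwise bound $z^{2j}\le C_{j,\lambda}\tau^j\mu_\lambda$, and then must appeal to \eqref{ADD1} to trade $\norm{h}_{L^2_{\mu_\lambda}}$ for $\tau^{1/2}\norm{\partial_zh}_{L^2_{\mu_\lambda}}$. Both approaches are standard and the power counts agree; the paper's is shorter and avoids the dependence on Lemma~\ref{lema}, while yours has the minor advantage that the Agmon step and the weight comparison are completely mechanical and do not require spotting the $\mu_{-\lambda/2}\mu_{\lambda/2}$ splitting.
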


\begin{proof} A direct computation  gives 
    \begin{align*}
    \norm{z^kh}_{L_z^{\infty}}&\leq \sup_{z\geq 0}\Big|z^k\int^{+\infty}_z\partial_zhd\tilde z\Big| \leq  \sup_{z\geq 0}\Big| \int^{+\infty}_z\tilde z^k \mu_{-\frac{\lambda}{2}}(\tilde z)\mu_{\frac{\lambda}{2}}(\tilde z)\partial_zhd\tilde z\Big|\\  
    &\leq  \norm{z^k\mu_{-\frac{\lambda}{2}}}_{L^2_z}\norm{\mu_{\frac{\lambda}{2}}\partial_zh}_{L^2_z}\leq C_{\lambda}(1+t)^{\frac{1+2k}{4}}\norm{\mu_{\frac{\lambda}{2}}\partial_zh}_{L^2_z},
    \end{align*}    
   where the last inequality uses the fact that 
\begin{equation*}\label{integr}
\forall\  \lambda<0,  \ \forall\ k\geq 0, \quad 	\norm{z^k\mu_{\lambda} }_{L_z^2}\leq C_{\lambda,k}  (1+t)^{\frac{1+2k}{4}}
\end{equation*}
with $C_{\lambda,k}$ a constant depending  on $\lambda$ and $k$. This completes the proof of  Lemma \ref{lemv}.
\end{proof}

 \begin{lemma}[Lemma 3.1 of \cite{lxy2025}]\label{lem:key}
 For any    $h(t,z)\in H_{\mu}^3$  satisfying $ h|_{z=0},$  define
\begin{align*}\label{relation}
    \mathscr{H}=\partial_zh+\frac{z}{2(1+t)}h.
\end{align*}
Then for any $0\leq \lambda<\tilde\lambda \leq 1$, the following estimate holds: 
     \begin{align*}
         \norm{\partial_z^{k+1}h}_{L^2_{\mu_{\lambda}}}\leq C_{\lambda,\tilde\lambda}\norm{\partial_z^k \mathscr{H}}_{L^2_{\mu_{\tilde\lambda}}},\quad k=0,1,2,
     \end{align*}
     where $C_{\lambda,\tilde{\lambda}}$  is a  constant  depending  on $\lambda$ and $\tilde\lambda$.
\end{lemma}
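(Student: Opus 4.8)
The plan is to exploit the conjugation identity
\[
\mathscr H=\partial_z h+\frac{z}{2(1+t)}h=\mu^{-1}\partial_z(\mu h),
\]
with $\mu=\mu_1$ as in \eqref{def:mu1}, and to argue by induction on $k$. Since $\partial_z\mu=\frac{z}{2(1+t)}\mu$, one has $\partial_z(\mu h)=\mu\mathscr H$, and the boundary condition $h|_{z=0}=0$ gives the representation $\mu(z)h(z)=\int_0^z\mu(\zeta)\mathscr H(\zeta)\,d\zeta$. The one elementary device used to exploit the weight gap $\lambda<\tilde\lambda$ is the pointwise Gaussian bound
\[
\frac{z^2}{1+t}\,\mu_\lambda(z)\le\frac{C}{\hat\lambda-\lambda}\,\mu_{\hat\lambda}(z),\qquad 0\le\lambda<\hat\lambda\le1,
\]
which follows from $se^{-cs}\le\frac{1}{ec}$ with $s=\frac{z^2}{4(1+t)}$ and $c=\hat\lambda-\lambda$; it yields $\norm{\frac{z}{1+t}v}_{L^2_{\mu_\lambda}}\le C(1+t)^{-1/2}(\hat\lambda-\lambda)^{-1/2}\norm{v}_{L^2_{\mu_{\hat\lambda}}}$ for any $v$, trading one power of the drift $\frac{z}{2(1+t)}$ for an extra factor $(1+t)^{-1/2}$ at the price of enlarging the weight.

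The crux is a reverse weighted Poincaré inequality,
\[
\frac{1}{(1+t)^{1/2}}\norm{h}_{L^2_{\mu_\lambda}}\le C\norm{\mathscr H}_{L^2_{\mu_\lambda}},
\]
which I would prove first. After the substitution $g=\mu h$ (so that $\mathscr H=\mu^{-1}\partial_z g$ and $g|_{z=0}=0$), it is equivalent to the Hardy--Poincaré inequality $\frac{1}{(1+t)^{1/2}}\norm{g}_{L^2_{\mu_{\lambda-2}}}\le C\norm{\partial_z g}_{L^2_{\mu_{\lambda-2}}}$ for the decaying Gaussian weight $\mu_{\lambda-2}$, whose Muckenhoupt constant is of order $1+t$. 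Granting this, the case $k=0$ is immediate: from $\partial_z h=\mathscr H-\frac{z}{2(1+t)}h$ the first term satisfies $\norm{\mathscr H}_{L^2_{\mu_\lambda}}\le\norm{\mathscr H}_{L^2_{\mu_{\tilde\lambda}}}$ by monotonicity of $\lambda\mapsto\mu_\lambda$, while the drift term is controlled by the Gaussian bound followed by the reverse Poincaré inequality.

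For the inductive step I would differentiate the defining relation,
\[
\partial_z^{k+1}h=\partial_z^k\mathscr H-\frac{z}{2(1+t)}\partial_z^k h-\frac{k}{2(1+t)}\partial_z^{k-1}h,
\]
and estimate in $L^2_{\mu_\lambda}$ termwise (we may assume $\lambda>0$, the general case following by monotonicity). The leading term is bounded by $\norm{\partial_z^k\mathscr H}_{L^2_{\mu_{\tilde\lambda}}}$ by monotonicity; the drift term is dominated, via the Gaussian bound, by $C(1+t)^{-1/2}\norm{\partial_z^k h}_{L^2_{\mu_{\hat\lambda}}}$ for an intermediate $\hat\lambda\in(\lambda,\tilde\lambda)$; and the lowest-order term reduces to the same quantity by the Poincaré inequality \eqref{ADD1} of Lemma \ref{lema}. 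It thus remains to bound $(1+t)^{-1/2}\norm{\partial_z^k h}_{L^2_{\mu_{\hat\lambda}}}$, and here the induction closes: writing $\partial_z^k h=\partial_z^{(k-1)+1}h$ and invoking the statement at level $k-1$ (with the admissible gap $\hat\lambda<\tilde\lambda$) turns it into $\norm{\partial_z^{k-1}\mathscr H}_{L^2_{\mu_{\tilde\lambda}}}$, whereupon \eqref{ADD1} applied to $\partial_z^{k-1}\mathscr H$ absorbs the factor $(1+t)^{-1/2}$ and returns $\norm{\partial_z^k\mathscr H}_{L^2_{\mu_{\tilde\lambda}}}$. Since $h\in H^3_{\mu}$, only $k=0,1,2$ arise.

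The main obstacle is the reverse Poincaré inequality. A same-weight energy identity for $\mathscr H=\partial_z h+\frac{z}{2(1+t)}h$ produces, after integration by parts, the sign-indefinite zeroth-order term $-\frac{1}{2(1+t)}\norm{h}_{L^2_{\mu_\lambda}}^2$, which \eqref{ADD1} cannot absorb when $\lambda\le1$; and since $\mu^{-1}$ lies in the kernel of $\partial_z+\frac{z}{2(1+t)}$, no such inequality can hold unless the boundary condition $h|_{z=0}=0$ is used. This forces the argument through the solution formula and the Hardy inequality above. The weight gap $\lambda<\tilde\lambda$ then plays the complementary role of supplying, through the Gaussian bound, the extra $(1+t)^{-1/2}$ decay that prevents $\norm{\partial_z h}$ from reappearing on the right-hand side; with these two ingredients the remaining steps are routine uses of monotonicity, Lemma \ref{lema}, and the triangle inequality.
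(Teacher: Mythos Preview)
The paper does not give its own proof of this lemma: it is quoted verbatim as Lemma~3.1 of \cite{lxy2025} and used as a black box in the proof of Proposition~\ref{thm:prioriheat}. There is therefore nothing in the present paper to compare your argument against.

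That said, your sketch is sound. The conjugation $\mathscr H=\mu^{-1}\partial_z(\mu h)$ is the right starting point, and your ``reverse Poincar\'e'' step is the genuine content: after the substitution $g=\mu h$ the required inequality becomes a weighted Hardy inequality for the \emph{decaying} Gaussian $\mu_{\lambda-2}$, whose Muckenhoupt constant is indeed of order $(1+t)$, giving the claimed $(1+t)^{-1/2}$ prefactor. The Gaussian trade-off $\frac{z^2}{1+t}\mu_\lambda\le C_{\hat\lambda-\lambda}\,\mu_{\hat\lambda}$ is exactly what the strict gap $\lambda<\tilde\lambda$ is for, and your induction closes because each drift term picks up a factor $(1+t)^{-1/2}$ that is absorbed by \eqref{ADD1} applied to $\partial_z^{k-1}\mathscr H$. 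The only point to flag is that you should fix $\hat\lambda$ concretely (say $\hat\lambda=\tfrac{\lambda+\tilde\lambda}{2}$) so that the induction hypothesis at level $k-1$ is invoked with a fixed gap $\hat\lambda<\tilde\lambda$ and the constants stay under control; otherwise the argument is complete.
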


 \subsection{Proof of Proposition \ref{thm:heat}: decay estimate for the heat equation}\label{subheat}
 This part is devoted to proving  Proposition \ref{thm:heat}, which establishes a refined decay estimate for the coefficients $U$ and $V$ in the three-dimensional linearized system \eqref{3Dlinear}. 
Recall that  $U$ and $V$ satisfy the   following  heat equation:
\begin{equation}\label{heatzero}
    \left\{
    \begin{aligned}
  &\partial_th-\partial_z^2h=0,\\
  &h|_{t=0}=h_0,\quad h|_{z=0}=0.
    \end{aligned}
    \right.
\end{equation}
As a preliminary step toward proving Theorem \ref{thm:linear}, we derive a refined decay estimate for the heat equation \eqref{heatzero}, under the assumption that the initial data 
 $h_0$
 satisfies
\begin{align}\label{XXX}
 \int^{+\infty}_0 zh_0(z)dz=0.
\end{align}

\begin{proposition}\label{thm:prioriheat} Let $h\in L^\infty\big([0,+\infty[\ ;   H^{3}_{\mu}\big)$  satisfy the heat equation \eqref{heatzero}, where  
 the initial data $h_0\in H^3_{\mu_{in}}$  fulfills condition \eqref{XXX}.  Then for any $0<\delta<2$ it holds that   
\begin{align*}
\norm{\partial_zh}_{L^{\infty}_z}+\norm{z\partial_zh}_{L^{\infty}_z}+\norm{z\partial_z^2h}_{L^{\infty}_z}\leq C_\delta\norm{h_0}_{H^3_{\mu_{in}}} (1+t)^{-\frac{8-\delta}{4}}, 
\end{align*}
where $C_\delta$ is a constant depending  on $\delta.$
\end{proposition}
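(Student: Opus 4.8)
The plan is to first reduce the three $L^\infty_z$ norms to weighted $L^2_{\mu_\lambda}$ norms of $\partial_z^2 h$ and $\partial_z^3 h$, and then to extract the decay of the latter from the heat flow, the hypothesis \eqref{XXX} being the source of the extra decay. For the reduction I would apply Lemma \ref{lemv}: replacing $h$ by $\partial_z h$ (with $k=0,1$) and by $\partial_z^2 h$ (with $k=1$) gives
\[
\norm{\partial_z h}_{L^\infty_z}\le C(1+t)^{\frac14}\norm{\partial_z^2h}_{L^2_{\mu_\lambda}},\qquad \norm{z\partial_z h}_{L^\infty_z}\le C(1+t)^{\frac34}\norm{\partial_z^2h}_{L^2_{\mu_\lambda}},
\]
\[
\norm{z\partial_z^2 h}_{L^\infty_z}\le C(1+t)^{\frac34}\norm{\partial_z^3h}_{L^2_{\mu_\lambda}}.
\]
Thus it suffices to prove, for $\lambda$ close to $1$, that $\norm{\partial_z^2h}_{L^2_{\mu_\lambda}}+\norm{\partial_z^3h}_{L^2_{\mu_\lambda}}\le C_\delta\norm{h_0}_{H^3_{\mu_{in}}}(1+t)^{-\frac{11-\delta}{4}}$; the worst of the three contributions is then $\norm{z\partial_z h}_{L^\infty_z}$, which yields exactly the exponent $-\frac{8-\delta}{4}$.

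The engine is the quantity $\mathscr H=\partial_z h+\frac{z}{2(1+t)}h$ from Lemma \ref{lem:key}. A direct computation shows that if $h$ solves \eqref{heatzero}, then $\partial_t\mathscr H-\partial_z^2\mathscr H=-\frac{1}{1+t}\mathscr H$, so that $\mathscr G:=(1+t)\mathscr H$ is again a solution of the plain heat equation. Since the compatibility conditions make the odd extension of $h$ smooth, $h$ is odd and hence $\mathscr H,\mathscr G$ are even, carrying the Neumann condition $\partial_z\mathscr G|_{z=0}=0$; consequently the mass $\int_0^\infty\mathscr G\,dz$ is conserved in time. The key algebraic fact is that \eqref{XXX} is \emph{equivalent} to the vanishing of this mass at $t=0$, since
\[
\int_0^\infty\mathscr G|_{t=0}\,dz=\int_0^\infty\Big(\partial_z h_0+\tfrac z2h_0\Big)\,dz=\tfrac12\int_0^\infty zh_0\,dz=0 .
\]
Applying Lemma \ref{lem:key} to the Dirichlet function $h$ reduces the problem to the decay of $\mathscr G$, via $\norm{\partial_z^{k+1}h}_{L^2_{\mu_\lambda}}\le C(1+t)^{-1}\norm{\partial_z^k\mathscr G}_{L^2_{\mu_{\tilde\lambda}}}$, the prefactor $(1+t)^{-1}$ being the gain produced by the damping.

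It remains to establish the weighted decay of $\mathscr G$ and its derivatives, and here the zero mass is used. A weighted energy estimate—pairing the heat equation with $\mu_\lambda\partial_z^k\mathscr G$, integrating by parts, and noting that the $z^2$-weighted term has the favourable sign for $\lambda\le1$ while all $z=0$ boundary terms vanish by parity—yields
\[
\tfrac12\tfrac{d}{dt}\norm{\partial_z^k\mathscr G}_{L^2_{\mu_\lambda}}^2+\norm{\partial_z^{k+1}\mathscr G}_{L^2_{\mu_\lambda}}^2\le \frac{\lambda}{4(1+t)}\norm{\partial_z^k\mathscr G}_{L^2_{\mu_\lambda}}^2 .
\]
Feeding in the Poincaré inequality \eqref{ADD1} gives a base rate $(1+t)^{-\lambda/4}$, which is sharp only for data of nonzero mass. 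To upgrade it, one uses $\int_0^\infty\mathscr G=0$ to write $\mathscr G=\partial_zP$ with $P(t,z)=-\int_z^\infty\mathscr G$ the Dirichlet primitive, which again solves \eqref{heatzero}; a second application of Lemma \ref{lem:key} to $P$ converts the vanishing mass into an extra factor $(1+t)^{-1}$, and the parabolic smoothing (each $\partial_z$ costing a further $(1+t)^{-1/2}$ in $L^2_{\mu_\lambda}$, read off from the dissipation term above against a time weight) supplies the derivative gains. Tracking powers gives $\norm{\partial_z^k\mathscr G}_{L^2_{\mu_\lambda}}\lesssim (1+t)^{-\frac54-\frac k2+\frac\delta4}$, hence $\norm{\partial_z^{k+1}h}_{L^2_{\mu_\lambda}}\lesssim (1+t)^{-\frac94-\frac k2+\frac\delta4}$, which is the bound required above.

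The main obstacle is precisely this conversion of the \emph{unweighted} conservation law into \emph{improved decay in the Gaussian-weighted space}: the naive energy estimate only sees the ground-state Poincaré constant of Lemma \ref{lema} and stalls at $(1+t)^{-\lambda/4}$, so one must genuinely exploit orthogonality to the slowest self-similar (Hermite) mode, which is what passage to the primitive $P$ and the second use of the $\mathscr H$-mechanism encode. The $\delta$-loss is intrinsic to this scheme: Lemma \ref{lem:key} requires $\lambda<\tilde\lambda\le1$ strictly and Lemma \ref{lemv} requires $\lambda>0$, so $\lambda=1$ is not admissible, and the accumulated gaps across the chain of estimates degrade the sharp exponent $-2$ (attained by $\norm{z\partial_z h}_{L^\infty_z}$) to $-\frac{8-\delta}{4}$.
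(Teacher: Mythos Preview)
Your approach is essentially the paper's, with only a cosmetic difference in bookkeeping: the paper introduces the second good unknown $H=\tilde h+\frac{z}{2(1+t)}\int_0^z\tilde h$ directly and works with the damped equation $(\partial_t-\partial_z^2+\frac{2}{1+t})H=0$, whereas you first rescale $\mathscr G=(1+t)\tilde h$ back to a plain heat solution, take its Dirichlet primitive $P$, and apply the $\mathscr H$-transform again. Since $\int_0^z\tilde h=-\int_z^\infty\tilde h$ by zero mass, your $\mathscr H_P$ is exactly $(1+t)H$, so your weighted energy estimate for $\mathscr H_P$ is literally $(1+t)^2$ times the paper's identity \eqref{dtvarphi}, and the subsequent cascade through Lemma~\ref{lem:key} and Lemma~\ref{lemv} is identical.

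One small correction to your closing remark: the $\delta$-loss does not originate from the strict inequality $\lambda<\tilde\lambda$ in Lemma~\ref{lem:key}. The paper runs the top-level energy for $H$ at $\lambda=1$ (see \eqref{dtvarphi}), and the loss comes instead from reserving a portion $\delta\|\partial_z H\|^2_{L^2_\mu}$ of the dissipation in \eqref{as1} so that its time-weighted integral can absorb the source term in the next derivative estimate \eqref{y2}--\eqref{as3}.
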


Note that $-\frac{8-\delta}{4}\leq -\frac32$ for $0
<\delta<2.$ Therefore,  Proposition \ref{thm:heat} is a direct consequence of Proposition \ref{thm:prioriheat}.

The proof of Proposition \ref{thm:prioriheat} is 
inspired by the works of \cite{MR4271962, MR3461362}, and   we begin by introducing some auxiliary linearly-good unknowns, following the spirit of these references. 
The first such unknown is defined as  
\begin{align}\label{defH}
      \tilde h(t,z)\stackrel{\rm def}{=} \partial_zh(t,z)+\frac{z}{2(1+t)}h(t,z).
\end{align}
This function $\tilde h$ satisfies
	\begin{equation}\label{tih}
		\left\{
		\begin{aligned}
&\Big(\partial_t-\partial_z^2+\frac{1}{1+t}\Big)\tilde h=0,\\
&\tilde h |_{t=0}=\partial_zh_0+\frac{z}{2}h_0,\quad\partial_z \tilde h |_{z=0}=0.
		\end{aligned}
		\right.
	\end{equation}	
As shown by Ignatova and Vicol \cite{MR3461362}, the additional damping term $\frac{1}{1+t}$ ensures that $\tilde h$ decays in $L^2_{\mu}$ at a rate almost like $(1+t)^{-\frac{5}{4}}$. Consequently, relation \eqref{defH} implies that $h$ itself decays at the faster rate  $(1+t)^{-\frac34}$, thereby improving upon the $L^2$ decay rate $(1+t)^{-\frac14}$ for the classical heat solution in \eqref{heatzero} with $L^1$ initial data. However, these decay rates are not fast enough to ensure the global existence.    Inspired by the work \cite{MR4271962} of Paicu and Zhang, we introduce the second  linearly-good unknown $H$ by setting
  \begin{equation}\label{defcalH}
  \begin{aligned}
     H(t,z)\stackrel{\rm def}{=}\tilde h(t,z)+\frac{z}{2(1+t)}\int^z_0 \tilde h(t,r)dr,
  \end{aligned}
  \end{equation}
where $\tilde h(t,z)$ is defined as in \eqref{defH}.
Then   $H$  satisfies  
	\begin{equation}\label{eq:mathcalH}
		\left\{
		\begin{aligned}
&\Big(\partial_t-\partial_z^2+\frac{2}{1+t}\Big)H=0,\\
&H|_{t=0}=\partial_zh_0+zh_0+\frac{z}{4}\int^{z}_0\tilde{z}h_0(\tilde z)d\tilde{z},\quad \partial_zH|_{z=0}=0.
		\end{aligned}
		\right.
	\end{equation}
Estimating $H$ in the $L^2_{\mu}$ setting requires that $H \rightarrow 0$ as $z \rightarrow +\infty$, which, by \eqref{defcalH}, is equivalent to
	\begin{equation}\label{+meva}
\forall\ t\geq 0,\quad \int_0^{+\infty} \tilde h(t,z)dz=0.
	\end{equation}
Under this condition,  the damping term $\frac{2}{1+t}$ leads to a decay rate for $H$ in $L^2_{\mu}$ that is almost like $(1+t)^{-\frac94}$ (see estimate \eqref{yed} below).  This, combined with \eqref{defcalH}, implies that $\tilde h$ also decays at the same  rate (see Lemma \ref{lem:key}), which is faster than that of system \eqref{tih}.

\begin{proof}
[Proof of Proposition \ref{thm:prioriheat}] 
 We begin by establishing the decay estimate for 
$H$, and then proceed to derive the corresponding estimate for 
$h$ which satisfies \eqref{heatzero}. The proof proceeds in two steps.

\textit{Step 1}.  Recalling $H$ is defined as in \eqref{defcalH}, we claim that for any $0< \delta<2,$
	\begin{equation}
		\label{asser}
	\sum_{j=0}^2	(1+t)^{\frac{9+2j-\delta}{4}} \norm{\partial_z^jH(t)}_{L^2_{\mu}}\leq C_{\delta}\norm{h_0}_{H^3_{\mu_{in}}}, 
	\end{equation} 
where	$C_\delta$ is a constant depending  on $\delta.$

To prove this, we first verify the validity of condition \eqref{+meva}, which enables us to estimate $\partial_z^j H$ in $L_\mu^2$ for $0\leq j\leq 2$. From the boundary condition and the assumption that $h\in L^\infty\big([0,+\infty[\ ;   H^{3}_{\mu}\big)$,  it follows that
\begin{equation}
	\label{bL}
	h|_{z=0}=h|_{z\rightarrow+\infty}=z\partial_z h|_{z\rightarrow+\infty}=0.
\end{equation}
Then we use the  identity $\partial_th=\partial_z^2h$ and  integration by parts to obtain
\begin{equation*}
	\begin{aligned}
		\partial_t \int^{+\infty}_0zh(t,z)dz=\int^{+\infty}_0z\partial_z^2h(t,z)dz=0,
	\end{aligned}
\end{equation*}
which with the assumption \eqref{XXX} yields 
\begin{equation*}\label{mv}
	\forall\ t\geq 0,\quad \int^{+\infty}_0zh(t,z)dz=0.
\end{equation*}
Moreover, this with \eqref{bL} yields, recalling $\tilde h$ is defined as in \eqref{defH},
\begin{align*}
  \forall\ t\geq 0,\quad  \int^{\infty}_0\tilde h(t, z)dz=\frac{1}{2(1+t)}\int^{+\infty}_0zh(t,z)dz=0,
\end{align*}
which gives \eqref{+meva}.

We now begin to derive the decay estimate \eqref{asser} for $H.$  By virtue of the fact that (recall $\mu$ is defined as in \eqref{def:mu1})
\begin{align*}
    \partial_t\mu =-\frac{z^2}{4(1+t)^2}\mu\quad\mathrm{and}\quad \partial_z^2\mu=\frac{1}{2(1+t)}\mu +\frac{z^2}{4(1+t)^2}\mu,
\end{align*}
we  use the conditions $\partial_z H|_{z=0}=0$ and $\partial_z\mu|_{z=0}=0$ to compute 
\begin{align*}
   2 \int_{0}^{+\infty}(\partial_t  H)H \mu dz&= \frac{d}{dt}\norm{H}^2_{L^2_{\mu}}-  \int_{0}^{+\infty} H^2 (\partial_t\mu)  dz  = \frac{d}{dt}\norm{H}^2_{L^2_{\mu}}+\frac{\norm{zH}_{L_\mu^2} ^2}{4(1+t)^2},
\end{align*}
and
\begin{align*}
  & -2\int_{0}^{+\infty}(\partial_z^2H)H \mu dz =2\norm{  \partial_z  H}^2_{L^2_{\mu}}+2\int_{0}^{+\infty}(\partial_zH)H\partial_z\mu dz \\
  &\qquad=2\norm{ \partial_zH}^2_{L^2_{\mu}} -\int_{0}^{+\infty}H^2\partial_z^2\mu dz   =2\norm{ \partial_zH}^2_{L^2_{\mu}}-\frac{\norm{H}_{L_\mu^2} ^2}{2(1+t)} -\frac{\norm{zH}_{L_\mu^2} ^2}{4(1+t)^2}.
  \end{align*}
Taking the $L_\mu^2$-product with $2H$ in \eqref{eq:mathcalH} and  combining the two identities above, we obtain\begin{equation}\label{dtvarphi}
	 \frac{d}{dt}\norm{H}^2_{L^2_{\mu}}+ 2\norm{ \partial_zH}^2_{L^2_{\mu}}+\frac{7}{2(1+t)}\norm{H}^2_{L^2_{\mu}}= 0. 
\end{equation}
Moreover,  it follows from \eqref{ADD1} in Lemma \ref{lema} that
\begin{align*}
    \frac{1}{2(1+t)}\norm{H}^2_{L^2_{\mu}}\leq \norm{ \partial_zH}^2_{L^2_{\mu}},
\end{align*}
and thus, for any $0<\delta<2$,
\begin{align*}
2\norm{ \partial_zH}^2_{L^2_{\mu}}  \geq   \delta  \norm{ \partial_zH}^2_{L^2_{\mu}}+\frac{2-\delta}{2(1+t)}\norm{H}^2_{L^2_{\mu}}.
\end{align*}\
Combining this with \eqref{dtvarphi} yields 
\begin{equation}\label{as1}
	 	\frac{d}{dt}\norm{ H}^2_{L^2_{\mu}}+\frac{9-\delta}{2(1+t)}\norm{ H}^2_{L^2_{\mu}}+\delta\norm{ \partial_z H}^2_{L^2_{\mu}}\leq 0.
\end{equation}
Noting  that $\partial_z^3H|_{z=0}=\big(\partial_t+\frac{2}{(1+t)}\big)\partial_zH|_{z=0}=0$, we  repeat the above argument to conclude that  
\begin{equation}\label{as}
	 	\frac{d}{dt}\norm{\partial_{z}^jH}^2_{L^2_{\mu}}+\frac{9-\delta}{2(1+t)}\norm{\partial_{z}^j H}^2_{L^2_{\mu}}+\delta\norm{ \partial_z^{j+1} H}^2_{L^2_{\mu}}\leq 0 \  \textrm { for } \  j=1, 2.
\end{equation}
Multiplying \eqref{as1}  by $(1+t)^{\frac{9-\delta}{2}}$ and   using the fact that 
\begin{align*}
\frac{d}{dt} (1+t)^{\frac{9-\delta}{2}} =(1+t)^{\frac{9-\delta}{2}} \frac{9-\delta}{2(1+t)},
\end{align*}
we obtain   
	\begin{equation*}
		\frac{d}{dt}\Big((1+t)^{\frac{9-\delta}{2}}\norm{H}^2_{L^2_{\mu}} \Big)+\delta (1+t)^{\frac{9-\delta}{2}}\norm{ \partial_zH}^2_{L^2_{\mu}}\leq 0.
	\end{equation*}
	Integrating the above estimate over $[0,t]$ yields 
	\begin{equation}\label{yed}
		(1+t)^{\frac{9-\delta}{2}} \norm{H(t)}^2_{L^2_{\mu}} +\delta\int_0^t(1+s)^{\frac{9-\delta}{2}}\norm{ \partial_zH(s)}^2_{L^2_{\mu}}ds\leq \norm{H(0)}^2_{L^2_{\mu_{in}}}.
	\end{equation}
For $j=1$, we rewrite the inequality \eqref{as} as 
\begin{equation*}
	\frac{d}{dt}\norm{\partial_zH}^2_{L^2_{\mu}}+\Big(\frac{11-\delta}{2(1+t)}-\frac{2}{2(1+t)}\Big)\norm{\partial_zH}^2_{L^2_{\mu}}+\delta\norm{ \partial_z^2H}^2_{L^2_{\mu}}\leq  0.
\end{equation*}
Multiplying  by $(1+t)^{\frac{11-\delta}{2}}$ yields 
\begin{equation*}
	\begin{aligned}
	  &	\frac{d}{dt}\Big((1+t)^{\frac{11-\delta}{2}}\norm{\partial_zH}^2_{L^2_{\mu}} \Big)+\delta (1+t)^{\frac{11-\delta}{2}}\norm{ \partial_z^2H}^2_{L^2_{\mu}}\leq (1+t)^{\frac{9-\delta}{2}}\norm{\partial_zH}^2_{L^2_{\mu}},
	\end{aligned}
\end{equation*}
and thus
\begin{equation}\label{y2}
	\begin{aligned}
	  &	  (1+t)^{\frac{11-\delta}{2}}\norm{\partial_zH(t)}^2_{L^2_{\mu}}  +\delta \int_0^t (1+s)^{\frac{11-\delta}{2}}\norm{ \partial_z^2H(s)}^2_{L^2_{\mu}}ds\\
	  &\leq \norm{\partial_zH(0)}^2_{L^2_{\mu}} + \int_0^t (1+s)^{\frac{9-\delta}{2}}\norm{\partial_zH(s)}^2_{L^2_{\mu}}ds\leq \norm{\partial_zH(0)}^2_{L^2_{\mu_{in}}} + \delta^{-1}\norm{H(0)}^2_{L^2_{\mu_{in}}},
	\end{aligned}
\end{equation}
where the last inequality uses \eqref{yed}. 
Similarly,  when $j=2$ we rewrite estimate \eqref{as} as 
 \begin{equation*}
 	\frac{d}{dt}\norm{\partial_z^2H}^2_{L^2_{\mu}}+\Big(\frac{13-\delta}{2(1+t)}-\frac{4}{2(1+t)}\Big)\norm{\partial_z^2H}^2_{L^2_{\mu}}+\delta\norm{ \partial_z^3H}^2_{L^2_{\mu}}\leq 0,
 \end{equation*}
 which implies 
 \begin{equation*}
	\begin{aligned}
	  &	\frac{d}{dt}\Big((1+t)^{\frac{13-\delta}{2}}\norm{\partial_z^2H}^2_{L^2_{\mu}} \Big)+\delta (1+t)^{\frac{13-\delta}{2}}\norm{ \partial_z^3H}^2_{L^2_{\mu}}\leq 2(1+t)^{\frac{11-\delta}{2}}\norm{\partial_z^2H}^2_{L^2_{\mu}}.
	\end{aligned}
\end{equation*}
 Integrating the above inequality over $[0,t]$ and using \eqref{y2}, we obtain 
 \begin{multline}\label{as3}
 	 (1+t)^{\frac{13-\delta}{2}}\norm{\partial_z^2H(t)}^2_{L^2_{\mu}}  +\delta \int_0^t (1+s)^{\frac{13-\delta}{2}}\norm{ \partial_z^3H(s)}^2_{L^2_{\mu}}ds\\
 	 \leq \norm{\partial_z^2H(0)}_{L^2_{\mu_{in}}}^2+2\delta^{-1}\norm{\partial_zH(0)}^2_{L^2_{\mu_{in}}} +2 \delta^{-2}\norm{H(0)}^2_{L^2_{\mu_{in}}}.
 \end{multline}
 Finally, using  Lemma \ref{lema} as well as the initial condition in \eqref{eq:mathcalH}, one can verify directly that  
 \begin{align*}
\sum_{j=0}^2\norm{ \partial_z^j H(0)}_{L^2_{\mu_{in}}} \leq C \norm{h_0}_{H^3_{\mu_{in}}}^2.
 \end{align*}
Combining this with estimates \eqref{as3} and  yields assertion \eqref{asser}.

\textit{Step 2}. In this step we will use \eqref{asser} to derive decay estimate for $h$. Noting \eqref{defH} and \eqref{defcalH} and  using Lemmas \ref{lemv} and \ref{lem:key} for $\lambda=\frac14$ and  $\tilde\lambda=\frac{1}{2},$ we obtain
	\begin{equation*}
		\norm{\partial_zh}_{L_z^{\infty}}\leq C(1+t)^{\frac{1}{4}}\norm{\partial_z^2h}_{L^2_{\mu_{\lambda}}}  	\leq C(1+t)^{\frac{1}{4}}\norm{\partial_z \tilde h}_{L^2_{\mu_{\tilde\lambda}}} 	\leq C(1+t)^{\frac{1}{4}} \norm{\partial_z H}_{L^2_{\mu}}.
	\end{equation*}
 Similarly, 
\begin{align*}
    \norm{z\partial_zh}_{L_z^{\infty}}\leq C(1+t)^{\frac{3}{4}}\norm{\partial_z^2h}_{L^2_{\mu_{\lambda}}}\leq C  (1+t)^{\frac{3}{4}}\norm{\partial_z H}_{L^2_{\mu}},
\end{align*}
and
\begin{align*}
    \norm{z\partial_z^2h}_{L_z^{\infty}}\leq C(1+t)^{\frac{3}{4}}\norm{\partial_z^3h}_{L^2_{\mu_{\lambda}}}\leq C(1+t)^{\frac{3}{4}}\norm{\partial_z^2 H}_{L^2_{\mu}}.
\end{align*}
Then combining these estimates and using \eqref{asser} yield that,  for any $0<\delta<2,$
\begin{equation*}
\begin{aligned}
	\norm{\partial_zh}_{L_z^{\infty}}+\norm{z\partial_zh}_{L_z^{\infty}}+\norm{z\partial_z^2h}_{L_z^{\infty}}&\leq C(1+t)^{\frac34} \big(\norm{\partial_z H}_{L^2_{\mu}}+\norm{\partial_z^2 H}_{L^2_{\mu}}\big)\\
	&\leq C_\delta(1+t)^{\frac{3}{4}-\frac{11-\delta}{4}}\leq  C_\delta(1+t)^{ -\frac{8-\delta}{4}},
	\end{aligned}
\end{equation*}
where	$C_\delta$ is a constant depending  on $\delta.$ This completes the  proof of Proposition \ref{thm:prioriheat}. 
\end{proof}

\subsection{Proof of Theorem \ref{thm:linear}}
This part is devoted to proving Theorem \ref{thm:linear} on the global well-posedness of the three-dimensional linearized system \eqref{3Dlinear}. As in the two-dimensional case, it suffices to establish the key \emph{a priori} estimate stated in  Theorem \ref{thm:apriori2} below.

Let $(u,v)$ be any given solution to the initial-boundary problem \eqref{3Dlinear}. To simplify the notation, we denote
\begin{align*}
    \vec{a}=(u,v).
\end{align*}
Let $|\vec{a}|_{\mathcal{X}_{\rho}}$ be given as in Definition \ref{defspace}, namely,
\begin{equation}\label{defX}
	 |\vec{a}|^2_{\mathcal{X}_{\rho}}=\sum^1_{j=0}\,\sum_{m=0}^{+\infty}L_{\rho,m}^2\|\partial_{y}^m\partial_z^j \vec{a}\|^2_{L^2}.
\end{equation}
We define $|\vec{a}|_{\mathcal{Y}_{\rho}}$ and $|\vec{a}|_{\mathcal{Z}_{\rho}}$ by setting
\begin{align}\label{defY}
 |\vec{a}|^2_{\mathcal{Y}_{\rho}}\stackrel{\rm def}{=}\sum^1_{j=0}\,\sum_{m=0}^{+\infty}\frac{m+1}{\rho}L_{\rho,m}^2\|\partial_{y}^m\partial_z^j \vec{a}\|^2_{L^2},   
\end{align}
and
\begin{align}\label{defZ}
|\vec{a}|^2_{\mathcal{Z}_{\rho}}\stackrel{\rm def}{=}\sum^1_{j=0}\,\sum_{m=0}^{+\infty}L_{\rho,m}^2\|\partial_{y}^m\partial_z^{j+1} \vec{a}\|^2_{L^2}+\sum_{m=0}^{+\infty}L_{\rho,m}^2\|\partial_x\partial_{y}^m\vec{a}\|^2_{L^2}.   
\end{align}
Here $L_{\rho,m}$ and $\rho$ are defined by \eqref{lrm} and \eqref{3Ddefrho}, respectively.

\begin{theorem}[\emph{A priori} estimate]\label{thm:apriori2}
Suppose that the hypothesis of  Theorem \ref{thm:linear} holds. Let $(u,v)\in L^{\infty}([0,+\infty[;\mathcal{X}_{\rho})$ be a solution to the system \eqref{3Dlinear}, satisfying that
\begin{align*}
    \forall\ t>0,\quad \abs{\vec{a}(t)}_{\mathcal{X}_{\rho}}^2+\int^t_0\abs{\vec{a}(s)}_{\mathcal{Z}_{\rho}}^2ds<+\infty,
\end{align*}
where
  $|\vec{a}|_{\mathcal{X}_{\rho}}$ and $|\vec{a}|_{\mathcal{Y}_{\rho}}$   are defined in  \eqref{defX} and \eqref{defZ}, respectively. Then
\begin{align*}
    \forall\ t>0,\quad \abs{\vec{a}(t)}_{\mathcal{X}_{\rho}}^2+\int^t_0\abs{\vec{a}(s)}_{\mathcal{Z}_{\rho}}^2ds\leq \abs{\vec{a}(0)}_{\mathcal{X}_{\rho_0}}^2.
\end{align*}

\end{theorem}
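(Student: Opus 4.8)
The plan is to prove the stated bound by an analytic-in-$y$ energy estimate, mirroring the two-dimensional argument but with the crucial new ingredient that the analyticity radius $\rho(t)$ in \eqref{3Ddefrho} is \emph{decreasing}. First I would apply $\partial_y^m$ and $\partial_z\partial_y^m$ to both equations of \eqref{3Dlinear}, take the $L^2(\mathbb R^3_+)$ inner products against $\partial_y^m u,\partial_z\partial_y^m u$ and $\partial_y^m v,\partial_z\partial_y^m v$, multiply by $L_{\rho,m}^2$, and sum over $m\ge0$ and $j\in\{0,1\}$. Since $\dot\rho(t)=-\frac{\rho_0}{4}(1+t)^{-3/2}<0$ and $\frac{d}{dt}L_{\rho,m}^2=2\frac{m+1}{\rho}\dot\rho\,L_{\rho,m}^2$, differentiating the weights produces exactly the favorable term $\dot\rho\,\abs{\vec a}_{\mathcal Y_\rho}^2=-\abs{\dot\rho}\,\abs{\vec a}_{\mathcal Y_\rho}^2$; this gain, absent in the fixed-radius setting, is the engine that controls the loss of one tangential derivative. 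The target inequality is $\frac12\frac{d}{dt}\abs{\vec a}_{\mathcal X_\rho}^2+\abs{\dot\rho}\abs{\vec a}_{\mathcal Y_\rho}^2+\abs{\vec a}_{\mathcal Z_\rho}^2\le(\text{errors})$.

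Next I would identify the dissipation. The vertical diffusion $-\partial_z^2$ contributes $\norm{\partial_z\partial_y^m\vec a}_{L^2}^2$ (at $j=0$) and $\norm{\partial_z^2\partial_y^m\vec a}_{L^2}^2$ (at $j=1$), all boundary terms vanishing since $\vec a|_{z=0}=0$ and $\partial_z^2\vec a|_{z=0}=0$ (the latter read off from \eqref{3Dlinear} at $z=0$). The Shercliff terms $\partial_x f,\partial_x g$, rewritten through $\partial_z^2 f=-\partial_x u$, $\partial_z^2 g=-\partial_x v$ and integrated by parts in $x$ and $z$ (boundary terms killed by $f|_{z=0}=\partial_z^2f|_{z=0}=0$), produce the non-local dissipation $\norm{\partial_x\partial_y^m\vec a}_{L^2}^2$. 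Together these assemble $\abs{\vec a}_{\mathcal Z_\rho}^2$ on the good side. The transport operators $U\partial_x$ and $V\partial_y$ are skew-adjoint in $x$ and $y$ respectively and, since $U,V$ depend only on $(t,z)$, drop out entirely at $j=0$; at $j=1$ they leave commutators with $\partial_zU,\partial_zV$, the $\partial_zU$-commutator pairing $\partial_x\partial_y^m\cdot$ with $\partial_z\partial_y^m\cdot$ (both in $\mathcal Z_\rho$) and the $\partial_zV$-commutator losing one $y$-derivative.

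The heart of the matter is the error terms, which are of two kinds. The first are genuine losses of one $y$-derivative, from the $\partial_zV$-commutator $(\partial_zV)\partial_y^{m+1}u$ and from the incompressibility coupling $w\partial_zU,\,w\partial_zV$: since $\partial_z w=-(\partial_x u+\partial_y v)$ and $w|_{z=0}=0$, the quantity $\partial_y^m w$ carries the derivative-losing content $\partial_y^{m+1}v$. I would absorb each extra $y$-derivative into the $\mathcal Y_\rho$-norm via the identity $L_{\rho,m}=\frac{m+1}{\rho}L_{\rho,m+1}$ from \eqref{lrm}, together with the Hardy bound $\norm{\partial_y^m w/z}_{L^2_z}\le2\norm{\partial_x\partial_y^m u+\partial_y^{m+1}v}_{L^2_z}$ and the sup-bounds $\norm{\partial_zU}_{L^\infty_z}+\norm{z\partial_zU}_{L^\infty_z}+\norm{z\partial_z^2U}_{L^\infty_z}\le C_1\eps_1(1+t)^{-3/2}$ (and likewise for $V$) supplied by \eqref{EstUV} in Proposition \ref{thm:heat}. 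The decisive structural coincidence is that these coefficients decay at \emph{exactly} the rate $(1+t)^{-3/2}=\frac{4}{\rho_0}\abs{\dot\rho}$, so every derivative-losing term is bounded by $C\eps_1(1+t)^{-3/2}\abs{\vec a}_{\mathcal Y_\rho}^2$ and is dominated by $\abs{\dot\rho}\abs{\vec a}_{\mathcal Y_\rho}^2$ once $\eps_1$ is small. The second kind are the non-derivative-losing pairings (e.g. the $\partial_x\partial_y^m u$ content of $w\partial_zU$ tested against $\partial_y^m u$); a plain factor $\norm{\partial_y^m\cdot}_{L^2}$ is not controlled by $\mathcal Z_\rho$, but it \emph{is} controlled by $\mathcal Y_\rho$ after inserting $\sqrt{(m+1)/\rho}$, while the compensating factor $\sqrt{\rho/(m+1)}\le\sqrt{\rho_0}$ on the remaining $\mathcal Z_\rho$-quantity is harmless; hence such terms are bounded by $C\eps_1(1+t)^{-3/2}\abs{\vec a}_{\mathcal Y_\rho}\abs{\vec a}_{\mathcal Z_\rho}$.

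Collecting everything yields $\frac12\frac{d}{dt}\abs{\vec a}_{\mathcal X_\rho}^2+\abs{\dot\rho}\abs{\vec a}_{\mathcal Y_\rho}^2+\abs{\vec a}_{\mathcal Z_\rho}^2\le C\eps_1(1+t)^{-3/2}\big(\abs{\vec a}_{\mathcal Y_\rho}^2+\abs{\vec a}_{\mathcal Y_\rho}\abs{\vec a}_{\mathcal Z_\rho}\big)$. Splitting the cross term by Young's inequality and choosing $\eps_1$ small enough that $C\eps_1\le\frac18\rho_0$ (so the $\mathcal Y_\rho$-errors are swallowed by $\abs{\dot\rho}\abs{\vec a}_{\mathcal Y_\rho}^2$ and the $\mathcal Z_\rho$-errors by $\abs{\vec a}_{\mathcal Z_\rho}^2$), all error terms are absorbed into the left side, giving $\frac{d}{dt}\abs{\vec a}_{\mathcal X_\rho}^2+\abs{\vec a}_{\mathcal Z_\rho}^2\le0$. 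Integrating from $0$ to $t$ and using $\rho(0)=\rho_0$ produces the claimed estimate with constant $1$. The main obstacle, and the reason the scheme closes with sharp constant rather than a Gr\"onwall factor, is precisely this matching: the shrinking-radius gain $\abs{\dot\rho}\abs{\vec a}_{\mathcal Y_\rho}^2$ and the decay $(1+t)^{-3/2}$ of the shear coefficients in Proposition \ref{thm:heat} carry the \emph{same} power of $(1+t)$, so no time-integrable-but-nonzero remainder survives; forcing the coupling term $w\partial_zU$ into this framework (via the decomposition of $w$, Hardy's inequality, and the $\mathcal Y_\rho$-absorption of plain $L^2$ factors) is the delicate point.
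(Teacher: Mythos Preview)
Your proposal is correct and follows essentially the same approach as the paper's proof: differentiate the analytic weights to produce the good term $\rho'|\vec a|_{\mathcal Y_\rho}^2$, extract the vertical and Shercliff dissipation into $|\vec a|_{\mathcal Z_\rho}^2$, bound the derivative-losing $w\partial_z(U,V)$ and commutator terms via Hardy's inequality and \eqref{EstUV}, and absorb everything using the matching $(1+t)^{-3/2}$ rates of $\rho'$ and the shear decay. The only cosmetic differences are that the paper exploits a cancellation between the $(\partial_zU)\partial_x\partial_y^m u$ commutator and part of $(\partial_z\partial_y^m w)\partial_zU$ before estimating (you treat them separately, which is equally valid), and the paper writes the cross error as $|\vec a|_{\mathcal X_\rho}|\vec a|_{\mathcal Z_\rho}$ rather than your $|\vec a|_{\mathcal Y_\rho}|\vec a|_{\mathcal Z_\rho}$, then converts via $|\vec a|_{\mathcal X_\rho}^2\le\rho|\vec a|_{\mathcal Y_\rho}^2$; both routes close identically.
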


\begin{proof} 
Applying $\partial_y^m\partial_z$ to the first  and second  equations in \eqref{3Dlinear} yields
\begin{multline*}
	(\partial_t +U\partial_x + V\partial_y - \partial_z^2)\partial_y^m\partial_zu\\
	=\partial_x \partial_y^m\partial_zf-(\partial_y^mw)\partial_z^2U+(\partial_zU)\partial_y^{m+1}v-(\partial_zV)\partial_y^{m+1}u, 
\end{multline*}
and 
\begin{multline*}
	 (\partial_t +U\partial_x + V\partial_y - \partial_z^2)\partial_y^m\partial_zv\\
	 =\partial_x \partial_y^m\partial_zg-(\partial_y^mw)\partial_z^2V+(\partial_zV)\partial_x\partial_y^{m}u-(\partial_zU)\partial_x\partial_y^{m}v.
\end{multline*}
 We perform energy estimate for these  equations and use the boundary condition $\partial_z^2u|_{z=0}=\partial_z^2v|_{z=0}=0$  and  the identities:
\begin{equation*}
	 \big(\partial_x\partial_y^m\partial_zf,\ \partial_y^m\partial_zu\big)_{L^2}= \big(\partial_y^m\partial_z^2 f,\ \partial_x\partial_y^mu\big)_{L^2}=-\norm{\partial_x\partial_y^mu}_{L^2}^2,
\end{equation*}
and 
\begin{equation*}
	 \big(\partial_x\partial_y^m\partial_zg,\ \partial_y^m\partial_zv\big)_{L^2}= \big(\partial_y^m\partial_z^2g,\ \partial_x\partial_y^mv\big)_{L^2}=-\norm{\partial_x\partial_y^mv}_{L^2}^2,
\end{equation*}
which follow from the fact that $\partial_xu+\partial_z^2f=0$ and $\partial_xv+\partial_z^2g=0$; 
this gives
\begin{equation*}
	\begin{aligned}
&\frac{1}{2}\frac{d}{dt}\|\partial_{y}^m\partial_z\vec{a}\|^2_{L^2}+ \big(\|\partial_{y}^m\partial_z^2\vec{a}\|^2_{L^2}+\|\partial_x\partial_{y}^m\vec{a}\|^2_{L^2}\big)\\
&\leq  C\norm{(z\partial_z^2V,z\partial_z^2U)}_{L^{\infty}} \norm{z^{-1}\partial_y^mw}_{L^2}\norm{\partial_y^m\partial_z\vec{a}}_{L^2}\\
&\quad+C\norm{(\partial_zU,\partial_zV)}_{L^{\infty}} \big(\norm{\partial_y^{m+1}\vec{a}}_{L^2}+\norm{\partial_x\partial_y^{m}\vec{a}}_{L^2}\big)\norm{\partial_y^m\partial_z\vec{a}}_{L^2}\\
&\leq C\sum_{1\leq j\leq 2} \norm{z^{j-1}\partial_z^{j}(U, V)}_{L^{\infty}} \big(\norm{\partial_y^{m+1}\vec{a}}_{L^2}+\norm{\partial_x\partial_y^{m}\vec{a}}_{L^2}\big)\norm{\partial_y^m\partial_z\vec{a}}_{L^2}\\
&\leq C\eps_1(1+t)^{-\frac32} \big(\norm{\partial_y^{m+1}\vec{a}}_{L^2}+\norm{\partial_x\partial_y^{m}\vec{a}}_{L^2}\big)\norm{\partial_y^m\partial_z\vec{a}}_{L^2}.
\end{aligned}
\end{equation*}
where the last lines uses Proposition   \ref{thm:heat}, and  the second inequality  follows from  the estimate 
\begin{equation*}
 \norm{ z^{-1} \partial_y^mw}_{L^2}\leq C\norm{\partial_x \partial_y^mu}_{L^2}+C\norm{ \partial_y^{m+1}v}_{L^2},  
\end{equation*}
which is a consequence of Hardy's  inequality.  Multiplying both sides by $L^2_{\rho,m}$ and then summing over $m\in\mathbb Z_+$, we use the identity
\begin{equation*}\label{eqrh}
\forall m\ge0,\quad \frac{1}{2}\frac{d}{dt}L^2_{\rho,m}=\rho'\frac{m+1}{\rho}L^2_{\rho,m}
\end{equation*}
to derive 
\begin{equation}\label{GB}
\begin{aligned}
&\frac{1}{2}\frac{d}{dt}\sum^{+\infty}_{m=0}L_{\rho,m}^2\|\partial_{y}^m\partial_z\vec{a}\|^2_{L^2}+\sum^{+\infty}_{m=0}L_{\rho,m}^2\big(\|\partial_{y}^m\partial_z^2\vec{a}\|^2_{L^2}+\|\partial_x\partial_{y}^m\vec{a}\|^2_{L^2}\big)\\
&\leq \rho'\sum^{+\infty}_{m=0}\frac{m+1}{\rho}L_{\rho,m}^2\|\partial_{y}^m\partial_z\vec{a}\|^2_{L^2} \\
&\quad+C\eps_1(1+t)^{-\frac{3}{2}} \sum^{+\infty}_{m=0}L_{\rho,m}^2\big(\norm{\partial_y^{m+1}\vec{a}}_{L^2}+\norm{\partial_x\partial_y^{m}\vec{a}}_{L^2}\big)\norm{\partial_y^m\partial_z\vec{a}}_{L^2}.
\end{aligned}
\end{equation}
For the last summation in \eqref{GB},  recalling  $|\vec{a}|_{\mathcal{Y}_{\rho}}$ and $|\vec{a}|_{\mathcal{Z}_{\rho}}$ are defined as in \eqref{defY} and \eqref{defZ}, respectively,  we compute
\begin{equation*}
	 \begin{aligned}
&\sum^{+\infty}_{m=0}L_{\rho,m}^2\big(\norm{\partial_y^{m+1}\vec{a}}_{L^2}+\norm{\partial_x\partial_y^{m}\vec{a}}_{L^2}\big)\norm{\partial_y^m\partial_z\vec{a}}_{L^2}\\
&\leq   \bigg (\sum^{+\infty}_{m=0}\frac{\rho}{m+1}L_{\rho,m}^2  \norm{ \partial_y^{m+1}\vec{a}}_{L^2}^2 \bigg)^{\frac12}\bigg(\sum^{+\infty}_{m=0}\frac{m+1}{\rho}L_{\rho,m}^2  \norm{\partial_y^m\partial_z\vec{a}}_{L^2}^2\bigg)^{\frac12}\\
&\quad + \bigg (\sum^{+\infty}_{m=0} L_{\rho,m}^2  \norm{\partial_x\partial_y^m\vec{a}}_{L^2}^2 \bigg)^{\frac12}\bigg(\sum^{+\infty}_{m=0} L_{\rho,m}^2  \norm{\partial_y^m\partial_z\vec{a}}_{L^2}^2\bigg)^{\frac12}\\
&\leq C\bigg(\sum^{\infty}_{m=0} \frac{m+2}{\rho}L^2_{\rho,m+1}\norm{ \partial_y^{m+1}\vec{a}}_{L^2}^2 \bigg)^{\frac12} |\vec a|_{\mathcal Y_\rho}+ |\vec{a}|_{\mathcal{Z}_{\rho}}|\vec{a}|_{\mathcal{X}_{\rho}}\\
&\leq C |\vec{a}|_{\mathcal{X}_{\rho}} |\vec{a}|_{\mathcal{Z}_{\rho}}+C|\vec{a}|_{\mathcal{Y}_{\rho}}^2,
    \end{aligned}
\end{equation*}
where the second inequality uses the fact that 
\begin{equation*}
	\frac{\rho}{m+1}L_{\rho,m}^2 \leq C \frac{m+2}{\rho} L_{\rho,m+1}^2. 
\end{equation*}
Combining these estimates yields 
\begin{equation}\label{zu}
\begin{aligned}
& \frac{1}{2}\frac{d}{dt}\sum^{+\infty}_{m=0}L_{\rho,m}^2\|\partial_{y}^m\partial_z\vec{a}\|^2_{L^2}+\sum^{+\infty}_{m=0}L_{\rho,m}^2\big(\|\partial_{y}^m\partial_z^2\vec{a}\|^2_{L^2}+\|\partial_x\partial_{y}^m\vec{a}\|^2_{L^2}\big)\\
& \leq \rho'\sum^{+\infty}_{m=0}\frac{m+1}{\rho}L_{\rho,m}^2\|\partial_{y}^m\partial_z\vec{a}\|^2_{L^2}  +C\eps_1(1+t)^{-\frac{3}{2}} \Big( |\vec{a}|_{\mathcal{X}_{\rho}}|\vec{a}|_{\mathcal{Z}_{\rho}}+ |\vec{a}|_{\mathcal{Y}_{\rho}}^2\Big).
\end{aligned}
\end{equation}
On the other hand, applying an analogous argument to the equations 
\begin{equation*}
		\left\{
		\begin{aligned}
            &  (\partial_t +U\partial_x + V\partial_y - \partial_z^2)\partial_y^mu=\partial_x \partial_y^mf-(\partial_y^mw)\partial_zU,\\
            &  (\partial_t +U\partial_x + V\partial_y - \partial_z^2)\partial_y^mv=\partial_x \partial_y^mg-(\partial_y^mw)\partial_zV,
		\end{aligned}
		\right.
	\end{equation*}
and using Proposition   \ref{thm:heat} again, we obtain 
\begin{equation}\label{uv}
\begin{aligned}
& \frac{1}{2}\frac{d}{dt}\sum^{+\infty}_{m=0}L_{\rho,m}^2\|\partial_{y}^m \vec{a}\|^2_{L^2}+\sum^{+\infty}_{m=0}L_{\rho,m}^2 \|\partial_{y}^m\partial_z\vec{a}\|^2_{L^2}+\sum^{+\infty}_{m=0}L_{\rho,m}^2\|\partial_{y}^m\partial_z(f,g)\|^2_{L^2} \\
& \leq \rho'\sum^{+\infty}_{m=0}\frac{m+1}{\rho}L_{\rho,m}^2\|\partial_{y}^m \vec{a}\|^2_{L^2}  +C\eps_1(1+t)^{-\frac{3}{2}} \Big( |\vec{a}|_{\mathcal{X}_{\rho}}|\vec{a}|_{\mathcal{Z}_{\rho}}+ |\vec{a}|_{\mathcal{Y}_{\rho}}^2\Big),
\end{aligned}
\end{equation}
where the third term on the left side arises from 
 the identities:
\begin{equation*}
	\big(\partial_x\partial_y^mf,\partial_y^mu\big)_{L^2}= -\norm{\partial_y^m\partial_zf}_{L^2}^2\ \textrm{ and }\ \big(\partial_x\partial_y^mg,\partial_y^mv\big)_{L^2}=-\norm{\partial_y^m\partial_z g}_{L^2}^2.
\end{equation*}
By definitions of $|\vec{a}|_{\mathcal{X}_{\rho}}$, $|\vec{a}|_{\mathcal{Y}_{\rho}}$ and $|\vec{a}|_{\mathcal{Z}_{\rho}}$,  we combine \eqref{zu} and \eqref{uv} to conclude that
\begin{multline*}
 \frac{1}{2}\frac{d}{dt}|\vec{a}|_{\mathcal{X}_{\rho}}^2-\rho'|\vec{a}|_{\mathcal{Y}_{\rho}}^2+|\vec{a}|_{\mathcal{Z}_{\rho}}^2 
     \leq C\varepsilon_1(1+t)^{-\frac{3}{2}} \Big( |\vec{a}|_{\mathcal{X}_{\rho}}|\vec{a}|_{\mathcal{Z}_{\rho}}+ |\vec{a}|_{\mathcal{Y}_{\rho}}^2\Big)\\
\leq \frac{1}{2}|\vec{a}|_{\mathcal{Z}_{\rho}}^2+C\varepsilon_1^2(1+t)^{-3}|\vec{a}|_{\mathcal{X}_{\rho}}^2+ C\eps_1(1+t)^{-\frac{3}{2}}  |\vec{a}|_{\mathcal{Y}_{\rho}}^2,
\end{multline*}
which with the fact that 
$ |\vec{a}|_{\mathcal{X}_{\rho}}^2\leq \rho |\vec{a}|_{\mathcal{Y}_{\rho}}^2$ 
yields 
\begin{align*}
   & \frac{1}{2}\frac{d}{dt}|\vec{a}|_{\mathcal{X}_{\rho}}^2-\rho'|\vec{a}|_{\mathcal{Y}_{\rho}}^2+\frac12|\vec{a}|_{\mathcal{Z}_{\rho}}^2 
     \leq    C\eps_1(1+\eps_1)(1+t)^{-\frac{3}{2}}  |\vec{a}|_{\mathcal{Y}_{\rho}}^2.
\end{align*}
From the definition \eqref{3Ddefrho} of $\rho,$  it follows that 
\begin{equation*}
	\rho'=-\frac{\rho_0}{4}(1+t)^{-\frac32}.
\end{equation*}
Then
\begin{align*}
    \frac{1}{2}\frac{d}{dt}|\vec{a}|_{\mathcal{X}_{\rho}}^2 +\frac12|\vec{a}|_{\mathcal{Z}_{\rho}}^2\leq \Big[-\frac{\rho_0}{4}+  C \eps_1(1+\eps_1)\Big](1+t)^{-\frac32} |\vec{a}|_{\mathcal{Y}_{\rho}}^2\leq 0,  
\end{align*}
provided $\eps_1$ is chosen sufficiently small. 
This yields
\begin{align*}
    \forall\ t\ge0,\quad \abs{\vec{a}(t)}_{\mathcal{X}_{\rho}}^2+\int^t_0\abs{\vec{a}(s)}_{\mathcal{Z}_{\rho}}^2ds\leq \abs{\vec{a}(0)}_{\mathcal{X}_{\rho_0}}^2.
\end{align*}
The proof of Theorem \ref{thm:apriori2} is thus completed.
\end{proof}

\appendix

\section{Proofs of inequalities \eqref{ineq3}, \eqref{ineq4},  \eqref{ineq5} and \eqref{ineq6}} \label{sec:ineq}
 In this part we present the proofs of several inequalities; the arguments are straightforward. In what follows, we let $\alpha=(\alpha_1,\alpha_2,\alpha_3)\in\mathbb Z_+^3$ and $\beta=(\beta_1,\beta_2,\beta_3)\in\mathbb Z_+^3$ be any  multi-indices, satisfying $\alpha_3\ge 1$ and $\beta\leq\alpha$.

 \begin{proof}[Proofs of \eqref{ineq3} and \eqref{ineq4}]
For $\beta_3=0$, we have $\alpha-\beta+(0,1,-1)\in\mathbb Z_+^3$. Then we use the fact \eqref{factor} to compute, recalling $M_{r,\alpha}$ is defined in \eqref{def:Hthetam},
 \begin{align*}
     &\binom{\alpha}{\beta}\frac{\abs{\alpha}^{-1}M_{r,\alpha}}{M_{r,\beta}M_{r,\alpha-\beta+(0,1,-1)}}\\
     &\leq \frac{\abs{\alpha}!}{\abs{\beta}!(\abs{\alpha}-\abs{\beta)}!}\frac{r^{\abs{\alpha}}(\abs{\alpha}+1)^4\abs{\alpha}^{-1}}{\abs{\alpha}!}\frac{\abs{\beta}!}{r^{\abs{\beta}}(\abs{\beta}+1)^4}\frac{(\abs{\alpha}-\abs{\beta})!}{r^{\abs{\alpha}-\abs{\beta}}(\abs{\alpha}-\abs{\beta}+1)^4}\\
     &\leq \frac{(\abs{\alpha}+1)^4\abs{\alpha}^{-1}}{(\abs{\beta}+1)^4(\abs{\alpha}-\abs{\beta}+1)^4}\leq \frac{(\abs{\alpha}+1)^4}{(\abs{\beta}+1)^4(\abs{\alpha}-\abs{\beta}+1)^4}\\
     &\leq \frac{C}{(\abs{\beta}+1)^4}+\frac{C}{(\abs{\alpha}-\abs{\beta}+1)^4},
 \end{align*}
 the last inequality following from that
 \begin{equation*}
    \frac{(\abs{\alpha}+1)^4}{(\abs{\beta}+1)^4(\abs{\alpha}-\abs{\beta}+1)^4}\leq \frac{C}{(\abs{\beta}+1)^4}\quad \textrm{if}\quad 0\leq\abs{\beta}\leq \Big[\frac{\abs{\alpha}}{2}\Big], 
 \end{equation*}
 and
 \begin{equation*}
    \frac{(\abs{\alpha}+1)^4}{(\abs{\beta}+1)^4(\abs{\alpha}-\abs{\beta}+1)^4}\leq \frac{C}{(\abs{\alpha}-\abs{\beta}+1)^4}\quad \textrm{if}\quad \Big[\frac{\abs{\alpha}}{2}\Big]+1\leq\abs{\beta}\leq\abs{\alpha}. 
 \end{equation*}
 Here we denote by $[p]$ the largest integer less than or equal
 to $p$.
 
For $\beta_3\ge 1$, we recall $\beta_*=\beta-1=(\beta_1,\beta_2,\beta_3-1)\in\mathbb Z_+^3$ and $\abs{\beta_*}=\abs{\beta}-1$.  A similar computation applied to \eqref{ineq3} yields
  \begin{align*}
     &\binom{\alpha}{\beta}\frac{\abs{\alpha}^{-1}M_{r,\alpha}}{M_{r,\beta_*}M_{r,\alpha-\beta+(0,1,0)}}\\
     &\leq \frac{\abs{\alpha}!}{\abs{\beta}!(\abs{\alpha}-\abs{\beta})!}\frac{r^{\abs{\alpha}}(\abs{\alpha}+1)^4\abs{\alpha}^{-1}}{\abs{\alpha}!}\frac{(\abs{\beta}-1)!}{r^{\abs{\beta}-1}\abs{\beta}^4}\frac{(\abs{\alpha}-\abs{\beta}+1)!}{r^{\abs{\alpha}-\abs{\beta}+1}(\abs{\alpha}-\abs{\beta}+2)^4}\\
     &\leq \frac{\abs{\alpha}+1)^4\abs{\alpha}^{-1}(\abs{\alpha}-\abs{\beta}+1)}{\abs{\beta}^5(\abs{\alpha}-\abs{\beta}+2)^4}\leq \frac{\abs{\alpha}+1)^4}{\abs{\beta}^4(\abs{\alpha}-\abs{\beta}+2)^4}\\
     &\leq \frac{C}{\abs{\beta}^4}+\frac{C}{(\abs{\alpha}-\abs{\beta}+2)^4}.
 \end{align*}
The proofs of \eqref{ineq3} and \eqref{ineq4} are thus completed.
 \end{proof}

 \begin{proof}[Proofs of \eqref{ineq5} and \eqref{ineq6}]
 The proofs of inequalities \eqref{ineq5} and \eqref{ineq6} are quite analogous to those of inequalities \eqref{ineq3} and \eqref{ineq4}. We compute that
   \begin{align*}
       &\binom{\alpha}{\beta}\frac{r \abs{\alpha}^{-1}M_{r,\alpha}}{M_{r,\beta}M_{r,\alpha-\beta+(0,0,1)}}\\
       &\leq \frac{\abs{\alpha}!}{\abs{\beta}!(\abs{\alpha}-\abs{\beta)}!}\frac{r^{\abs{\alpha}+1}(\abs{\alpha}+1)^4\abs{\alpha}^{-1}}{\abs{\alpha}!}\frac{\abs{\beta}!}{r^{\abs{\beta}}(\abs{\beta}+1)^4}\frac{(\abs{\alpha}-\abs{\beta}+1)!}{r^{\abs{\alpha}-\abs{\beta}+1}(\abs{\alpha}-\abs{\beta}+2)^4}\\
     &\leq \frac{(\abs{\alpha}+1)^4\abs{\alpha}^{-1}(\abs{\alpha}-\abs{\beta}+1)}{(\abs{\beta}+1)^4(\abs{\alpha}-\abs{\beta}+2)^4}\leq \frac{(\abs{\alpha}+1)^4}{(\abs{\beta}+1)^4(\abs{\alpha}-\abs{\beta}+2)^4}\\
     &\leq \frac{C}{(\abs{\beta}+1)^4}+\frac{C}{(\abs{\alpha}-\abs{\beta}+2)^4}
   \end{align*}  
 for $\beta_3=0$,  and
   \begin{align*}
     &\binom{\alpha}{\beta}\frac{\abs{\alpha}^{-1}M_{r,\alpha}}{M_{r, \beta+(0,1,-1)}M_{r,\alpha-\beta}}\\
     &\leq \frac{\abs{\alpha}!}{\abs{\beta}!(\abs{\alpha}-\abs{\beta})!}\frac{r^{\abs{\alpha}}(\abs{\alpha}+1)^4\abs{\alpha}^{-1}}{\abs{\alpha}!}\frac{\abs{\beta}!}{r^{\abs{\beta}}(\abs{\beta}+1)^4}\frac{(\abs{\alpha}-\abs{\beta})!}{r^{\abs{\alpha}-\abs{\beta}}(\abs{\alpha}-\abs{\beta}+1)^4}\\
        &\leq \frac{(\abs{\alpha}+1)^4\abs{\alpha}^{-1}}{(\abs{\beta}+1)^4(\abs{\alpha}-\abs{\beta}+1)^4}\leq \frac{(\abs{\alpha}+1)^4}{(\abs{\beta}+1)^4(\abs{\alpha}-\abs{\beta}+1)^4}\\
     &\leq \frac{C}{(\abs{\beta}+1)^4}+\frac{C}{(\abs{\alpha}-\abs{\beta}+1)^4}
 \end{align*}
 for $\beta_3\ge 1$. The proofs are thus completed.
 \end{proof}

	

\end{document}